\newcommand{\dotcup}{\ensuremath{\mathaccent\cdot\cup}}
\newcommand{\IN}{\mathbb{N}}
\newcommand{\IR}{\mathbb{R}}
\newcommand{\IH}{\mathbb{H}}
\newcommand{\ov}[1]{\overline{#1}}
\DeclareMathOperator{\Ric}{Ric}
\DeclareMathOperator{\tr}{tr}
\DeclareMathOperator{\id}{id}
\DeclareMathOperator{\Isom}{Isom}
\DeclareMathOperator{\diam}{diam}
\DeclareMathOperator{\inj}{inj}
\DeclareMathOperator{\Sym}{Sym}
\DeclareMathOperator{\vol}{vol}
\DeclareMathOperator{\supp}{supp}
\DeclareMathOperator{\DIV}{div}
\DeclareMathOperator{\Int}{Int}
\DeclareMathOperator{\Tor}{\mathbb{T}}
\newcommand{\Mat}[3]{\left( \begin{matrix} #1 & #2 \\ & #3 \end{matrix} \right)}
\newcommand{\sfrac}[2]{{\textstyle \frac{#1}{#2}}}
\newcommand{\twocoeff}[2]{ \big\{ \hspace{-1mm} \begin{smallmatrix} #1 \\ #2 \end{smallmatrix} \hspace{-1mm} \big\} }
\newtheorem{Theorem}{Theorem}[section]
\newtheorem{Lemma}[Theorem]{Lemma}
\newtheorem{Corollary}[Theorem]{Corollary}
\newtheorem{Proposition}[Theorem]{Proposition}
\numberwithin{equation}{section}
\theoremstyle{definition}
\title{Stability of hyperbolic manifolds with cusps under Ricci flow}
\author{Richard H Bamler}
\date{April 12, 2010}
\begin{document}
\begin{abstract}
We show that every finite volume hyperbolic manifold of dimension greater or equal to $3$ is stable under rescaled Ricci flow, i.e. that every small perturbation of the hyperbolic metric flows back to the hyperbolic metric again.
Note that we do not need to make any decay assumptions on this perturbation.

It will turn out that the main difficulty in the proof comes from a weak stability of the cusps which has to do with infinitesimal cusp deformations.
We will overcome this weak stability by using a new analytical method developed by Koch and Lamm.
\end{abstract}

\maketitle
\tableofcontents

\section{Introduction and statement of the result}
In this paper, we will prove the following theorem
\begin{Theorem} \label{Thm:main}
For any complete hyperbolic manifold $(M^n, \ov g)$ of finite volume and dimension $n \geq 3$, there is an $\varepsilon > 0$ such that the following holds: \\
If $g_0$ is another smooth metric on $M$ with
\[ (1- \varepsilon) \ov g \leq g_0 \leq (1+\varepsilon) \ov g, \]
then there is a solution $(g_t)_{t \in [0, \infty)}$ to the rescaled Ricci flow equation
\[ \dot{g}_t = - 2 \Ric_{g_t} - 2 (n-1) g_t \]
starting from $g_0$ which exists for all time and as $t \to \infty$ we have convergence $g_t \longrightarrow \ov g$ in the pointed smooth Cheeger-Gromov sense, i.e. there is a family of diffeomorphisms $\Psi_t$ of $M$ such that $\Psi_t^* g_t \longrightarrow \ov g$ in the smooth sense on every compact subset of $M$. 

Moreover, $\varepsilon$ can be chosen so that it only depends on an upper volume bound on $M$ for $n \geq 4$ resp. an upper diameter bound on the compact part $M_{cpt}$ of $M$ for $n = 3$ (see subsection \ref{subsec:Hypmfs} for more details).
\end{Theorem}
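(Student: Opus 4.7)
The plan is to set up the rescaled Ricci--DeTurck flow with background metric $\ov g$, so that $\ov g$ becomes a stationary solution and the evolution of $h_t = g_t - \ov g$ is a quasilinear strictly parabolic system. Its linearization at $h=0$ is, modulo harmless lower order terms, the shifted Lichnerowicz heat equation $\partial_t h = \Delta_L h + 2(n-1)h$. The theorem will follow once I can produce a long-time small solution $h_t$ of this modified flow for every small initial $h_0$, and then pull it back by appropriate diffeomorphisms to recover Cheeger--Gromov convergence.

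On the compact part of $M$ the shifted Lichnerowicz operator has a spectral gap and hence gives exponential decay, so the genuine difficulty sits in the cusps. On a cusp, asymptotic to $e^{-2r} g_{\mathbb{T}^{n-1}} + dr^2$, infinitesimal deformations of the flat cross-section $\mathbb{T}^{n-1}$ yield bounded tensor fields that are almost annihilated by the linearization, producing essential spectrum accumulating at zero. Consequently, in weighted Sobolev norms adapted to the cusp the linearized semigroup is only weakly stable, and no exponential decay is available for a generic bounded perturbation. This is the main obstruction: a naive contraction argument in weighted spaces cannot close.

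To bypass this weak stability, I would follow the Koch--Lamm philosophy developed for Navier--Stokes with rough initial data and work in a scale-invariant $L^\infty$-based norm augmented by a Carleson-type quantity controlling parabolic integrals of $|\nabla h|^2$. In such a norm the linearized semigroup is merely bounded, which is enough: the nonlinearity of the Ricci--DeTurck flow is quadratic in $h$ and $\nabla h$, and parabolic regularity allows one to estimate it by the same norm. A fixed-point iteration on a small ball should then yield a long-time smooth solution for every sufficiently small $h_0$. Establishing the requisite linear estimates --- heat kernel and Carleson bounds on a hyperbolic manifold with cusps --- will be the technical heart of the proof; this is also where the dimension split enters. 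For $n \geq 4$ the cusp geometry is uniformly controlled by an upper volume bound, while for $n = 3$ one can have arbitrarily short translations in the horospherical cross-section, so one is forced to assume a diameter bound on $M_{cpt}$.

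Finally, to upgrade pointwise smallness of $h_t$ to convergence $g_t \to \ov g$ in the pointed smooth Cheeger--Gromov sense, I would integrate the DeTurck vector field generated by the flow to produce the diffeomorphisms $\Psi_t$. The infinitesimal cusp deformation modes, which need not decay in $L^\infty$, should correspond to diffeomorphisms whose support is pushed further and further out into the cusps, so that $\Psi_t^* g_t \to \ov g$ smoothly on every fixed compact subset of $M$. The expected main obstacle throughout is the construction and manipulation of the correct $L^\infty$--Carleson function space on a cusped hyperbolic manifold, together with the verification that the underlying linear heat estimates survive the presence of the slow cusp modes.
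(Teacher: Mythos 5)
Your overall strategy --- rescaled Ricci--DeTurck flow with background $\ov g$ plus a Koch--Lamm type scale-invariant estimate to cope with the slowly decaying cusp modes --- is the same as the paper's, and your diagnosis of the difficulty (infinitesimal deformations of the flat cross-section) is correct. The genuine gap is in the step where you assert that the nonlinearity is ``quadratic in $h$ and $\nabla h$'' and can therefore be absorbed by a fixed point in an $L^\infty$--Carleson norm. The whole point of the weak stability is that $h$ itself never becomes small beyond its initial size $\varepsilon$: quadratic terms of the schematic form $h * \nabla h$ that are \emph{not} in divergence form, or $h * \nabla^2 h$, are then only of order $\varepsilon\, t^{-1/2}$ resp. $\varepsilon\, t^{-1}$, and convolving $\varepsilon\, t^{-1/2}$ against the heat kernel produces a contribution of order $\varepsilon \sqrt{t}$, so the iteration does not close. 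The paper handles this by (i) splitting the solution on each cusp into its torus-invariant and oscillatory components --- the oscillatory part is exponentially attracted, and only the invariant part, reduced to a $(1{+}1)$-dimensional system in $(s,t)$, is attacked with the Koch--Lamm scheme --- and (ii), crucially, by \emph{modifying} the DeTurck vector field (using $\log_{\ov g} g$ instead of $g-\ov g$) so that in the invariant system every dangerous $M*M'$-type term either cancels, sits inside a spatial divergence, or carries a factor from the exponentially damped components $(A,V,F)$. The paper states explicitly that for the unmodified flow a term of the form $(\tr M' - \tr(E+M)^{-1}M')(M+E)$ survives which the method cannot handle; your proposal contains no mechanism playing this role, and without one the core estimate fails.

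A secondary inaccuracy: in the DeTurck gauge the modified flow $g_t$ itself converges to $\ov g$ smoothly on every compact set, and the diffeomorphisms $\Psi_t$ obtained by integrating the DeTurck vector field merely translate this back into Ricci flow; they do not gauge away the cusp-deformation modes. What actually produces decay on compact sets is a time-dependent weight $W_t(s)=\min\{\exp(\beta s - \lambda t),1\}$ on the whole manifold, i.e.\ a border between the exponentially decaying region and the merely bounded region that recedes linearly into the cusps, combined with the cusp stability theorem applied beyond that border. Some device of this kind, which you do not sketch, is also needed to obtain the claimed pointed smooth convergence and to make the dimension-dependent choice of $\varepsilon$ quantitative.
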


Observe, that the theorem is already known in the \emph{compact} case (see e.g. \cite{Ye}).
The \emph{finite volume} case is more general than the compact case since it allows the manifold to have cusps, and hence to be noncompact (for a geometric description of these manifolds see subsection \ref{subsec:Hypmfs}).
A similar stability result also holds in dimension $2$.
However, one has to take into account a finite dimensional deformation space of the hyperbolic structure of $M$.
The $2$ dimensional case has been treated by Giesen and Topping in \cite{GT} where they show much more, namely that for \emph{any} initial metric $g_0$ on \emph{any} surface $M$ supporting a conformally equivalent hyperbolic metric, the rescaled Ricci flow converges to the hyperbolic metric (see also \cite{JMS} for an earlier approach).

We would like to emphasize that the power of the theorem lies in the fact that we do not impose any decay assumptions on the perturbation $g_0 - \ov g$ at infinity.
In fact, the case in which $g_0 - \ov g$ is small and decays for instance exponentially with respect to the distance from a base point, can be treated by almost the same methods as used for the compact case.
The reason why we dropped the decay assumption comes from the fact that we want to allow cusp deformations (or ``trivial Einstein deformations'' as in \cite{Bam}) which arise arise from deformations of the cross-sectional flat structure of each cusp.

Here is a motivation why this problem is interesting:
In dimension $3$, finite volume hyperbolic manifolds appear as pieces in the long-time behaviour of the Ricci flow.
These pieces constitute exactly the hyperbolic pieces in the geometric decomposition of the given manifold and they are attached to the other geometric pieces along their cusps.
Perelman (\cite{Per}) has used this fact for his proof of the geometrization conjecture.
However, it is still not known whether Ricci flow exhibits the geometric decomposition of the manifold in a stronger sense.
Results into this direction have been achieved by Lott in \cite{Lot} where he shows that this is indeed the case under certain curvature and diameter assumptions which entail that the manifold consists of a single geometric piece.
In the case in which the geometric decomposition contains a hyperbolic piece which is not the only piece, i.e. if the hyperbolic piece has a cusp, much is still unknown.
Furthermore, it is also not known whether there are always only finitely many surgeries in the whole Ricci flow.
One approach of analyzing these questions would be to understand the Ricci flow well enough on each geometric piece and then apply a gluing argument to treat the general case.
In order to do this, stability results play an important role and our theorem gives a very strong statement for the hyperbolic pieces.

Another application of the Theorem would be to treat the question whether the Einstein metric $g_{\ov{\sigma}}$ constructed in \cite{Bam} can also be obtained from the almost Einstein metric (constructed in \cite[chp 3]{Bam}) by Ricci flow.
This might imply that the constant $\varepsilon$ in Theorem \ref{Thm:main} can be chosen only depending on the volume of $M$ in dimension $3$.
Moreover, it would demonstrate a first example for a gluing argument for Ricci flows.

Finally, we hope to use the methods presented in this paper to show an improved stability result for hyperbolic space $(\IH^n, \ov g)$, i.e. to analyze the long-time behaviour of Ricci flow $(g_t)$ if $g_0 - \ov g$ is sufficiently small.
The case in which $g_0 - \ov g$ has $1/r$ decay at infinity has been treated by the author in \cite{BamSymSp} (see also \cite{LiYin}, \cite{SSS2} for earlier results).
A result that does not assume any decay of the perturbation would enable us to choose $\varepsilon$ in Theorem \ref{Thm:main} even independent of the volume of $M$.

The paper is organized as follows: Section \ref{sec:Prelim} explains all necessary background material.
Then we give a brief sketch of the proof in section \ref{sec:Overview}.
In sections \ref{sec:cusp} and \ref{sec:invRF}, we prove a stability theorem for hyperbolic cusps which is then used for the proof of Theorem \ref{Thm:main} in section \ref{sec:whole}.

I would like to thank my advisor Gang Tian for his constant support and encouragement during this project and Hans-Joachim Hein for many helpful discussions.
Special thanks go to Tobias Lamm for presenting his paper \cite{KL} to me during his visit to Princeton in February 2009. 
Its methods are very crucial for the following proof.

\section{Preliminaries} \label{sec:Prelim}
\subsection{Hyperbolic manifolds} \label{subsec:Hypmfs}
We recall the \emph{thick-thin-decomposition} for hyperbolic manifolds (see e.g. \cite[p. 671]{Rat} or \cite[p. 89]{Kapovich})
\begin{Theorem}
There is a constant $\mu_n > 0$, the \emph{Margulis constant,} such that the following holds:
If $(M^n, \ov{g})$ is a finite volume hyperbolic manifold then $M$ can be decomposed into a \emph{thin part} $M_{thin}$ and a \emph{thick part} $M_{thick}$ with $M = M_{thin} \dotcup M_{thick}$ such that:
\begin{itemize}
 \item $\inj > \mu_n$ on $M_{thick}$ and $M_{thick}$ is relatively compact in $M$.
 \item $M_{thin}$ is a finite union of closed sets $N_1, \ldots, N_p$ and $N'_1, \ldots, N'_{p'}$ where 
 \begin{itemize}
 \item the $N_k$ are \emph{cusps} of the form $[0, \infty) \times (\Tor^{n-1} / \Gamma_k)$ for finite subgroups $\Gamma_k < \Isom \Tor^{n-1}$ with a warped product metric
 \[ \ov{g} = ds^2 + e^{-2s} g_{flat, \Tor^{n-1} / \Gamma_k}. \]
 In the case in which $\Gamma_k = \{ 1 \}$, we call $N_k$ \emph{standard}.
 \item and the $N'_k$ are covered by cylindrical neighborhoods around geodesics in hyperbolic space.
 \end{itemize}
Furthermore, we can choose the $N_k$ such that their boundaries are images of horospheres under the universal covering projection and such that $\inj = \mu_n$ at some point on $\partial N_k$.
\end{itemize}
Call $M_{cpt} = M_{thick} \cup N'_1 \cup \ldots \cup N'_{p'}$ the \emph{compact part} of $M$ and $M_{ncpt} = N_1 \cup \ldots \cup N_p$ its \emph{noncompact part}.

In every dimension, $\diam M_{thick}$ is bounded from above by a constant which only depends on an upper bound on $\vol M$ and in dimension $n \geq 4$, this is even true for $\diam M_{cpt}$.
Moreover, in every dimension the number of cusps as well as the geometry of the $\partial N_k$ is bounded by a constant only depending on an upper volume bound on $M$.
\end{Theorem}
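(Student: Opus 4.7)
The plan is to deduce this from the classical Margulis lemma. That lemma provides a constant $\mu_n > 0$ so that for every discrete torsion-free subgroup $\Gamma < \Isom(\IH^n)$ and every $x \in \IH^n$, the subgroup $\Gamma_{\mu_n}(x)$ generated by those $\gamma \in \Gamma$ with $\dist(x,\gamma x) < \mu_n$ is virtually nilpotent, and in the hyperbolic setting in fact virtually abelian with all infinite-order elements sharing either a common fixed point at infinity (parabolic case) or a common axis (loxodromic case). I would take this $\mu_n$ as the Margulis constant and set
\[ M_{thin} := \{ x \in M : \inj_x < \mu_n/2 \}, \qquad M_{thick} := M \setminus M_{thin}, \]
so that a point lies in $M_{thin}$ exactly when some homotopically essential loop through it has length less than $\mu_n$. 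Each connected component of $M_{thin}$ then corresponds to such a maximal virtually abelian ``short'' subgroup.

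For each component $N$, lift to $\IH^n$ and examine the associated subgroup $\Gamma'$. In the parabolic case, conjugating in the upper half-space model so that the common fixed point is $\infty$, every element of $\Gamma'$ acts as a Euclidean isometry on each horosphere $\{y = c\}$. Finiteness of $\vol(M)$ forces $\Gamma'$ to act cocompactly on the horosphere, so Bieberbach's theorem produces a finite-index translation subgroup and realises the horospherical quotient as $\Tor^{n-1}/\Gamma_k$ with $\Gamma_k < \Isom \Tor^{n-1}$ finite. Switching to $s := \log y$, the hyperbolic metric takes the warped form $ds^2 + e^{-2s}|dx|^2$, descending to the claimed cusp metric on $N_k$; the boundary may be chosen as the image of a horosphere $\{s = s_0\}$ with $s_0$ such that $\inj = \mu_n$ somewhere on it. In the loxodromic case, $\Gamma'$ preserves a geodesic and $N$ is a Margulis tube $N'_k$ around the image of that geodesic.

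For the quantitative assertions, $M_{thick}$ is disjoint from the unbounded ends (which lie in the $N_k$), hence relatively compact. A lower injectivity radius bound on $M_{thick}$ combined with finite volume forces a uniform upper bound on the number of disjoint $\mu_n/2$-balls contained in $M_{thick}$ (each with volume at least that of a hyperbolic $\mu_n/2$-ball), and hence a diameter bound depending only on $\vol M$. For the cusps, the warped product formula gives $\vol N_k = \vol(\partial N_k)/(n-1)$, and $\vol(\partial N_k)$ is bounded below because a ball of some fixed radius embeds in $\partial N_k$; this bounds the number of cusps and, via Bieberbach, the geometry of each $\partial N_k$. The main subtlety is the sharper claim about $\diam M_{cpt}$ in dimension $n \geq 4$: a Margulis tube around a geodesic of length $L$ has volume growing linearly in $L$ once $n \geq 4$ (so a volume bound controls the length and hence the diameter), whereas in dimension $3$ the tube volume can remain small for arbitrarily long core geodesics of very small radius, which is precisely why only $\diam M_{thick}$, and not $\diam M_{cpt}$, is controllable by volume in that case.
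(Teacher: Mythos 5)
The paper itself does not prove this theorem; it is quoted from the standard references ([Rat], [Kap]), and your outline follows exactly that classical route (Margulis lemma, Bieberbach for the parabolic components, tubes for the loxodromic ones, packing arguments for the volume-dependent bounds), so for the qualitative decomposition and the bounds on $\diam M_{thick}$, the number of cusps and the geometry of the $\partial N_k$ your sketch is essentially the textbook argument and is fine modulo routine constant-juggling (e.g.\ replacing the components of $\{\inj < \mu_n/2\}$ by slightly smaller horoball quotients so that the boundaries are horospherical and $\inj=\mu_n$ somewhere on them, and a chaining argument to pass from the ball-counting bound to a diameter bound).

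The genuine gap is in your explanation of the one nontrivial quantitative claim, namely why $\diam M_{cpt}$ is volume-bounded for $n\geq 4$ but only $\diam M_{thick}$ is for $n=3$. Your stated mechanism is backwards. First, ``tube volume grows linearly in the core length $L$'' holds in every dimension (for fixed tube radius $r$ the volume is proportional to $L$ already in $\IH^3$), so it cannot distinguish $n\geq 4$ from $n=3$; moreover the cores of Margulis tubes are by definition short (length below the Margulis constant), so ``arbitrarily long core geodesics'' never occur as cores of components of the thin part, and controlling $L$ gives nothing anyway, since the diameter of a tube is governed by its depth (radius) $R$, not by $L$. The actual dimension-$3$ phenomenon is the opposite of what you describe: hyperbolic Dehn fillings produce manifolds of uniformly bounded volume whose core geodesics have length $\ell \to 0$ and whose Margulis tubes have radius $R \to \infty$ with volume roughly $\pi \ell \sinh^2 R$ staying bounded; these deep tubes make $\diam M_{cpt}$ unbounded even though $\vol M$ and $\diam M_{thick}$ are controlled. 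The correct input for $n\geq 4$ is the fact underlying Wang's finiteness theorem: in dimension at least $4$ the volume of the Margulis tube tends to infinity as the core length tends to $0$, so an upper volume bound yields a lower bound on the lengths of closed geodesics, hence an upper bound on the tube radii (via the displacement estimate relating $\ell$ and $R$), hence a bound on the diameters of the $N'_k$ and so on $\diam M_{cpt}$. As written, your argument neither proves the $n\geq 4$ statement nor correctly identifies why it fails for $n=3$, so this part needs to be replaced by the Wang-type tube-volume estimate (or a citation to it).
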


The geometry of any such cusp $(N = [0, \infty) \times (\Tor^{n-1}/ \Gamma), \ov g)$ only depends on the geometry its (flat) boundary torus quotient $\partial N = \Tor^{n-1} / \Gamma$.
If $(N, \ov{g})$ is standard, we can choose euclidean coordinates $x_2, \ldots, x_n$ on $\Tor^{n-1}$ (up to an additive constant), let $s = x_1$ denote the coordinate on $[0, \infty)$ and write
\begin{equation} \label{eq:hypcusp} \ov{g} = ds^2 + e^{-2s} (dx_2^2 + \ldots + dx_n^2). \end{equation}

\subsection{(Modified) Ricci deTurck flow} \label{subsec:MRdT}
In this paper we will rather be interested in a modified version of Ricci deTurck flow, than Ricci flow itself.
This flow will agree with Ricci flow and non-modified Ricci deTurck flow modulo pull backs via continuous families of diffeomorphisms.

The rescaled Ricci flow equation reads
\begin{equation} 
\dot{g}^{RF}_t = - 2 \Ric_{g^{RF}_t} - 2 (n-1) g^{RF}_t. \label{eq:nRF}
\end{equation}
In order to define the (nonmodified) Ricci deTurck flow, we need to make use of a distinguished background metric $\ov{g}$ which we will always chose to be the given hyperbolic metric on $M$.
Define the divergence operator
\[ \DIV_{\ov g} : C^\infty(M; \Sym_2 T^*M) \longrightarrow C^\infty(M; T M), \quad h \mapsto - \sum_i (\ov\nabla_{\ov{e}_i} h(\ov{e}_i, \cdot))^{\ov{\#}} \]
where we sum over a local $\ov{g}$-orthonormal frame field $(\ov{e}_i)$ and the musical operator $\ov{\#}$ is also taken with respect to $\ov{g}$.
Set
\[ X'_{\ov g}(h) = \DIV_{\ov g} h + \tfrac12 \ov\nabla \tr_{\ov{g}} h. \]
Then the Ricci deTurck flow equation reads
\[ \dot{g}^{DT}_t = - 2 \Ric_{g^{DT}_t} - 2(n-1) g^{DT}_t - \mathcal{L}_{X'_{\ov g}(g^{DT}_t)} g^{DT}_t. \]
The advantage of Ricci deTurck flow over Ricci flow is that its linearization at $g_t = \ov g$ is strongly elliptic.
This fact has been used by deTurck to give a simplified proof for the short-time existence of Ricci flow (\cite{DeT}).

For our purposes we have to adjust the lower order terms of this flow equation slightly.
The reason for doing this is quite subtle and has to do with the disappearance of certain higher order terms when we consider a very special subclass of solutions, namely the ``invariant cusp'' solutions, in section \ref{sec:invRF}.
The significant effect of this modification will be pointed out on page \pageref{page:MRdF} there.
Loosely speaking, the modification ensures the stability of the cusps.

Any $g \in \Sym_2 T^* M$ can be interpreted as a self-adjoint endomorphism of $TM$ with respect to the background metric $\ov g$.
So for $g$ sufficiently close to $\ov g$, we can define $\log_{\ov g} g$ by the Taylor series of the logarithm in $g-\ov g$.
Now set
\[ X_{\ov g}(g) = \DIV_{\ov g} \log_{\ov g}(g) + \tfrac12 \ov\nabla \tr_{\ov g} \log_{\ov g}(g). \]
The modified Ricci deTurck equation is now defined as
\begin{equation} \dot{g}^{MDT}_t = - 2 \Ric_{g^{MDT}_t} - 2 (n-1) g^{MDT}_t - \mathcal{L}_{X_{\ov g}(g^{MDT}_t)} g^{MDT}_t. \label{eq:MRdTflow} \end{equation}

The following Proposition expresses the equivalence of modified Ricci deTurck flow and Ricci flow.
\begin{Proposition} \label{Prop:MDTisRF}
 Let $(g_t^{MDT})_{t \in [0,T)}$ be a smooth solution to the Ricci deTurck flow equation (\ref{eq:MRdTflow}) and assume that $|g_t^{MDT}-\ov g| < \varepsilon_0$ everywhere for some universal $\varepsilon_0 > 0$.
 Define the time dependent vector field $X_t = X_{\ov g} (g_t^{MDT})$.
 Then $X_t$ has a flow $(\Psi_t)_{t \in [0,T)}$, i.e. a family of diffeomorphisms $\Psi_t : M \to M$ such that
\[ \dot\Psi_t = X_t \circ \Psi_t \qquad \text{and} \qquad \Psi_0 = \id_M, \]
and $g_t = \Psi^*_t g_t^{MDT}$ solves the rescaled Ricci flow equation (\ref{eq:nRF}).
\end{Proposition}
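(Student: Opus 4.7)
The plan is a direct deTurck-style verification. The modified Ricci deTurck equation \eqref{eq:MRdTflow} differs from the rescaled Ricci flow \eqref{eq:nRF} by precisely the extra Lie derivative term $-\mathcal{L}_{X_t} g^{MDT}_t$, and pulling back along the flow of $X_t$ is designed to absorb exactly this term. Concretely, once the flow $\Psi_t$ of the time-dependent vector field $X_t$ is known to exist, the standard formula for the time derivative of the pull-back of a time-dependent tensor along a time-dependent flow gives
\[
\frac{d}{dt}\bigl(\Psi_t^* g^{MDT}_t\bigr) \;=\; \Psi_t^*\bigl(\partial_t g^{MDT}_t\bigr) + \Psi_t^*\bigl(\mathcal{L}_{X_t} g^{MDT}_t\bigr).
\]
Substituting \eqref{eq:MRdTflow} makes the two Lie derivative terms cancel, and invoking the diffeomorphism invariance $\Psi_t^* \Ric_{g^{MDT}_t} = \Ric_{\Psi_t^* g^{MDT}_t}$ this reduces to
\[
\dot g_t \;=\; -2\Ric_{g_t} - 2(n-1) g_t,
\]
which is exactly \eqref{eq:nRF}.

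The real content of the proposition lies in the existence of the flow $(\Psi_t)$ on the noncompact manifold $M$. Since $g^{MDT}_t$ is smooth in $(x,t)$ and $X_{\ov g}$ involves at most one spatial derivative of its argument, $X_t$ is smooth in space and continuous in time, so standard ODE theory gives local integral curves through every point. What must be ruled out is that an integral curve escapes to infinity down one of the cusps before time $T$. I would do this by upgrading the pointwise smallness $|g^{MDT}_t - \ov g| < \varepsilon_0$ to a uniform $\ov g$-$C^1$ bound on $g^{MDT}_t$ on any compact time subinterval $[0,T'] \subset [0,T)$, via interior parabolic Schauder estimates for the (uniformly) strongly parabolic MDT system. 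Such a $C^1$ bound translates into a uniform $\ov g$-bound on $X_t$ on $M \times [0,T']$, and since $(M,\ov g)$ is complete, the flow of a globally bounded vector field is defined for all time on that interval; letting $T' \nearrow T$ then yields existence of $\Psi_t$ on all of $[0,T)$.

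The main obstacle is therefore the flow existence step on the noncompact background rather than the algebraic cancellation, which is immediate. Once global existence of $\Psi_t$ on $[0,T)$ is in hand, the one-line computation in the first paragraph completes the argument.
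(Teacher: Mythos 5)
Your overall strategy is exactly the paper's: the cancellation of the Lie derivative term under pull-back is the standard deTurck computation (which the paper dismisses as ``can be checked easily''), and the existence of $(\Psi_t)$ is reduced to a bound on $X_t$ obtained from interior parabolic estimates together with completeness of $(M,\ov g)$. One step as you phrase it, however, would fail: interior Schauder estimates cannot upgrade the hypothesis $|g^{MDT}_t-\ov g|<\varepsilon_0$ to a \emph{uniform} $\ov g$-$C^1$ bound on all of $[0,T']$ including $t=0$, since the initial metric is only controlled in $L^\infty$ and its first derivatives may be unbounded on the noncompact $M$ (and $\nabla g^{MDT}_t\to\nabla g^{MDT}_0$ as $t\to 0$), so any interior estimate necessarily degenerates as $t\to 0$. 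The correct statement, which is what the paper invokes through Corollary \ref{Cor:Shi}, is the scale-degenerate bound $|X_t|\leq C\,t^{-1/2}$ for $t>0$; this blows up at $t=0$ but is integrable in time, so every integral curve of $X_t$ has finite $\ov g$-length on $[0,T']$ and hence cannot leave every compact set before time $T'$, and completeness of $\ov g$ then yields the flow $(\Psi_t)_{t\in[0,T)}$ exactly as in your last step. With that correction your proposal coincides with the paper's proof; the remainder (the pull-back identity and diffeomorphism invariance of the Ricci tensor) is fine as written.
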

\begin{proof}
For the existence of the flow $(\Psi_t)$ observe that we have $|X_t| \leq C t^{-1/2}$ by Corollary \ref{Cor:Shi}.
The fact that $g_t$ satisfies the rescaled Ricci flow equation can be checked easily.
\end{proof}

Hence, in order to establish Theorem \ref{Thm:main}, it suffices to prove
\begin{Proposition} \label{Prop:main}
For any complete hyperbolic manifold $(M^n, \ov g)$ of finite volume and dimension $n \geq 3$, there is an $\varepsilon > 0$ (which can be chosen as indicated in Theorem \ref{Thm:main}) such that the following holds: \\
If $g_0$ is another smooth metric on $M$ with $\Vert g_0 - \ov g \Vert_{L^\infty} < \varepsilon$, then there is a solution $(g_t)_{t \in [0,\infty)}$ to the modified Ricci deTurck flow equation (\ref{eq:MRdTflow}) which exists for all time and we have smooth convergence $g_t \to \ov g$ as $t \to \infty$ on each compact subset.
\end{Proposition}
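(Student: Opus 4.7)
The plan is to treat the modified Ricci deTurck equation (\ref{eq:MRdTflow}) as a semilinear parabolic equation for the perturbation $h = g - \ov g$ and to produce a global-in-time solution by a fixed point argument \`a la Koch--Lamm \cite{KL} in a Banach space that encodes a small $L^\infty$ bound together with weighted parabolic integral estimates on $h$, $\ov\nabla h$ and $\ov\nabla^2 h$. The linearisation of the right-hand side of (\ref{eq:MRdTflow}) at $\ov g$ is a second order elliptic operator of Lichnerowicz type (the modification built into $X_{\ov g}$ is chosen precisely so that it looks as simple as possible), and the whole scheme rests on an $L^\infty$-to-$L^\infty$ heat kernel estimate for this operator; once that is in hand, the nonlinear terms in the equation are at worst quadratic in $h, \ov\nabla h, \ov\nabla^2 h$ and can be absorbed via contraction.

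The heart of the argument, and the main obstacle, is stability on a single hyperbolic cusp $N = [0,\infty) \times (\Tor^{n-1}/\Gamma)$ as in (\ref{eq:hypcusp}): the linearisation has no spectral gap there, because perturbations that are constant along the cross-section correspond to flat deformations of $\Tor^{n-1}/\Gamma$, that is, to trivial Einstein deformations. I would therefore decompose $h$ on each cusp into its fibrewise average $h^{inv}$ and a remainder $h^{non} = h - h^{inv}$. For $h^{non}$ the first nonzero eigenvalue of the flat Laplacian on $\Tor^{n-1}/\Gamma$ supplies a genuine spectral gap, and the standard Koch--Lamm contraction argument yields exponential decay. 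The invariant part $h^{inv}$ depends only on the cusp coordinate $s$, so its equation is effectively one-dimensional; here the specific choice
\[ X_{\ov g}(g) = \DIV_{\ov g} \log_{\ov g}(g) + \tfrac12 \ov\nabla \tr_{\ov g} \log_{\ov g}(g) \]
is essential, as advertised on page \pageref{page:MRdF}: this modification cancels the higher order terms that would otherwise destabilise the cusp, leaving a degenerate but controllable equation for $h^{inv}$. This is the program of sections \ref{sec:cusp} and \ref{sec:invRF}.

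Having the cusp stability in hand, the last step is globalisation. I would cover $M = M_{cpt} \cup N_1 \cup \dots \cup N_p$ and combine the cusp estimates on each $N_k$ with a Koch--Lamm argument on $M_{cpt}$, where the linearisation is genuinely coercive by Koiso-type rigidity and where all constants are uniform by the $\vol M$ (resp.\ $\diam M_{cpt}$) bound from subsection \ref{subsec:Hypmfs}. A single Koch--Lamm norm on $M$ then provides short-time existence and a contraction estimate that closes a bootstrap as long as $\Vert g_t - \ov g \Vert_{L^\infty}$ stays below the smallness threshold; iterating yields existence on $[0,\infty)$. Finally, Corollary \ref{Cor:Shi} delivers the decay $|X_{\ov g}(g_t)| \leq C t^{-1/2}$ needed to justify Proposition \ref{Prop:MDTisRF} and, together with Shi-type derivative estimates, upgrades the $L^\infty$ convergence $g_t \to \ov g$ to smooth convergence on every compact subset. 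The principal obstacle, absent in the compact case, is thus the lack of a spectral gap on the invariant cusp modes; this is precisely what forces a move away from $L^2$ methods towards the Koch--Lamm framework and what motivates the logarithmic modification in $X_{\ov g}$.
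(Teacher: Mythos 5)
Your cusp-level outline agrees with the paper's strategy only in broad strokes, and one attribution is off: in the paper the Koch--Lamm machinery is applied to the \emph{invariant} part, which after the reduction of section \ref{sec:invRF} becomes a one-dimensional system $(u,v)$ whose $u$-component does not decay at all (it behaves like a free one-dimensional heat flow, $u\sim 1$, $u'\sim t^{-1/2}$); this is exactly why the structure (\ref{eq:struc1})--(\ref{eq:struc3}), guaranteed by the logarithmic modification, is needed. The oscillatory part is not handled by a ``standard Koch--Lamm contraction'': it is controlled through a Duhamel representation and oscillatory heat-kernel bounds (Lemma \ref{Lem:oschk}), which combine the $L^2$ positivity (\ref{eq:Lgeqnm2}) with the fact that the cross-sectional tori have diameter $\sim e^{-s}$ (the averaging trick (\ref{eq:trick})). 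Citing the spectral gap of the flat Laplacian on $\Tor^{n-1}/\Gamma$ is a reasonable heuristic, but the coupling terms $I^{inv},J^{inv}$ and the weighted decay $e^{-s-\delta t}$ of the oscillatory component have to be produced quantitatively, as in subsections \ref{subsec:representh}--\ref{subsec:finalcusp}.

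The genuine gap is the globalisation. Theorem \ref{Thm:RdToncusp} does not take merely small boundary data: its hypothesis is that $e^{\delta t}\bigl(|g_t-\ov g|+|\nabla g_t|\bigr)$ is small on $B_{10\sigma}(\partial N)\times[0,T)$, i.e.\ it needs \emph{exponential decay in time} of the perturbation near each cusp boundary, precisely because non-decaying boundary values would keep feeding the trivial Einstein deformations into the cusp. Your scheme --- a Koch--Lamm argument on $M_{cpt}$, ``coercive by Koiso-type rigidity'', glued to the cusp estimates --- never produces this input: $M_{cpt}$ is a manifold with boundary, any coercivity statement there needs boundary control, and the whole difficulty is to bound the influence of the non-decaying invariant cusp modes on the compact part. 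The paper's mechanism for this is a heat-kernel estimate on all of $M$ (Lemma \ref{Lem:wholehkest}), the time-dependent weight $W_t(s)=\min\{e^{\beta s-\lambda t},1\}$ (equivalently, a thick--thin border moving linearly in $t$), and the exponent arithmetic of Lemma \ref{Lem:C0est}, which together give $|h_{t_0}|(x_0)\le C(\omega_T^2+H)\,e^{\beta s_0-\lambda t_0}$ whenever $\beta(s_0-11\sigma)\le\lambda t_0$; only this supplies the $e^{-\lambda t}$ decay near $\partial N_l$ that lets one invoke Theorem \ref{Thm:RdToncusp} and close the bootstrap $\omega_T\le C_0(\omega_T^2+H)$. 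Without an argument of this type (or a strengthened cusp theorem that tolerates non-decaying boundary data), your ``single Koch--Lamm norm on $M$'' does not yield a closing contraction, so as written the global step fails.
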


Finally, we express equation (\ref{eq:MRdTflow}) in terms of the perturbation $h_t = g^{MDT}_t - \ov g$.
For simplicity, we will write $g$ for $g^{MDT}_t$ and $h$ for $h_t$.
We will use $\ov g$ as a background metric and assume $\overline\Ric_{ab} = -(n-1)\ov g_{ab}$.
Since we will never be dealing with covariant derivatives with respect to $g$, we will denote the covariant derivatives with respect to $\ov g$ by $\nabla$ rather than $\overline \nabla$.
First observe that
\begin{equation*}
\begin{split} 
 2 \Ric_{ab} &= - 2 (n-1) \ov g_{ab} - 2(n-1) h_{ab} + (L h)_{ab} \\
&\qquad + \ov g^{uv} (\nabla^2_{au} h_{bv} + \nabla^2_{bu} h_{av} - \nabla^2_{ab} h_{uv} ) \\
&\qquad + (g^{uv} - \ov g^{uv}) (\nabla^2_{ua} h_{bv} + \nabla^2_{ub} h_{av} - \nabla^2_{uv} h_{ab} - \nabla^2_{ab} h_{uv}) \\
 &\qquad + g^{uv} g^{pq} (\nabla_u h_{pa} \nabla_v h_{qb} - \nabla_p h_{ua} \nabla_v h_{qb} + \tfrac12 \nabla_a h_{up} \nabla_b h_{vq}) \\
 &\qquad + g^{uv}( - \nabla_u h_{vp} + \tfrac12 \nabla_p h_{uv}) g^{pq} (\nabla_a h_{qb} + \nabla_b h_{qa} - \nabla_q h_{ab}) 
\end{split}
\end{equation*}
where $L$ is called \emph{Einstein operator} with
\[ (L h)_{ab} = - \triangle h_{ab} - 2 \ov g^{uv} \ov g^{pq} \ov R_{aupb} h_{vq}. \]
The Lie derivative term can be computed as follows:
\begin{multline} 
 (\mathcal{L}_{X_{\ov{g}}(g)} g)_{ab} = X^u \nabla_u h_{ab} + g_{au} \nabla_b X^u + g_{bu} \nabla_a X^u, \\
 \text{where} \qquad X^u =  \ov{g}^{uv} \ov{g}^{pq} (- \nabla_p (\log g)_{qv} + {\textstyle \frac12} \nabla_v (\log g)_{pq}) \label{eq:Lieder}
\end{multline}
Hence the evolution equation for $h_t$ is
\begin{equation} \partial_t h_t + L h_t = R[h_t] + \nabla^* S[h_t], \label{eq:dhRS} \end{equation}
where with $g = \ov g + h$ and $X^u$ as above
\begin{alignat*}{1}
 R_{ab}[h] &=  - g^{uv} g^{pq} (\nabla_u h_{pa} \nabla_v h_{qb} - \nabla_p h_{ua} \nabla_v h_{qb} + \tfrac12 \nabla_a h_{up} \nabla_b h_{vq}) \\
 &\qquad - g^{uv}( - \nabla_u h_{vp} + \tfrac12 \nabla_p h_{uv}) g^{pq} (\nabla_a h_{qb} + \nabla_b h_{qa} - \nabla_q h_{ab}) - X^u \nabla_u h_{ab} \displaybreak[1]  \\
 &\qquad - \ov g^{sp} \ov g^{qv} \nabla_s h_{pq} (\nabla_a h_{bv} + \nabla_b h_{av} - \nabla_v h_{ab}) + \ov g^{up} \ov g^{vq} \nabla_a h_{pq} \nabla_b h_{uv} \\
 &\qquad + \nabla_a h_{bu} X^u + \nabla_b h_{au} X^u
\end{alignat*}
and $\nabla^* S_{ab} = - \nabla_s S^s_{ab}$ with
\begin{equation*} 
\begin{split}
  S^s_{ab}[h] &= (g^{sv} - \ov g^{sv})(\nabla_a h_{bv} + \nabla_b h_{av} - \nabla_v h_{ab}) - (g^{uv} - \ov g^{uv}) \delta_a^s \nabla_b h_{uv} \\
&\qquad + \delta_a^s (\ov g^{uv} \nabla_u h_{bv} - \tfrac12 \ov g^{uv} \nabla_b h_{uv} + g_{bu} X^u) \\
&\qquad + \delta_b^s (\ov g^{uv} \nabla_u h_{av} - \tfrac12 \ov g^{uv} \nabla_a h_{uv} + g_{au} X^u).
\end{split}
\end{equation*}
Observe that at every point, $S_{ab}^c$ is linear in $\nabla h$.
Moreover, if $h = 0$ (but not necessarily $\nabla h = 0$) at that point, we have $S_{ab}^c = 0$.
Hence if $|h|< 0.1$, we can bound
\begin{equation} |R_{ab}|[h] \leq C |\nabla h|^2, \qquad |S_{ab}^s|[h] \leq C |h| |\nabla h|. \label{eq:boundRS} \end{equation}
We will also make use of the following simpler identity: if $|h_t|< 0.1$ then by (\ref{eq:dhRS})
\begin{equation} \label{eq:Qtequation}
 |(\partial_t + L) h_t| \leq C( |\nabla h_t|^2 + |h_t| |\nabla^2 h_t| ).
\end{equation}

\subsection{The Einstein operator} \label{subsec:Einstop}
We will now analyze the Einstein operator $L$.
If $\ov g$ is the hyperbolic metric, then
\begin{equation} (L h)_{ab} = -\triangle h_{ab} - 2 h_{ab} + 2 \ov g^{ij} h_{ij} \ov g_{ab}. \label{eq:Lonhyp} \end{equation}
We can also derive a Weitzenb\"ock formula for $L$.
The formal adjoint of $\DIV_{\ov g}$ is
\[ \DIV_{\ov g}^*: C^\infty(M; TM) \to C^\infty(M;\Sym_2 T^*M), \quad X^a \mapsto {\textstyle \frac12} (\ov g_{bi} \nabla_a X^i + \ov g_{ai} \nabla_b X^i ). \]
Furthermore, let
\[ d : C^\infty(M; \Sym_2 T^* M) \to C^\infty(M; \Lambda_2 T^*M \otimes T^*M), \quad h_{ab} \mapsto \nabla_a h_{bc} - \nabla_b h_{ac}. \]
Its formal adjoint is
\begin{multline*} d^* : C^\infty(M; \Lambda_2 T^*M \otimes T^*M) \to C^\infty(M; \Sym_2 T^* M), \\ t_{abc} \mapsto - {\textstyle \frac12} (\ov{g}^{ij} \nabla_i t_{jab} + \ov{g}^{ij} \nabla_i t_{jba}). \end{multline*}
Then, using the assumption $\overline \Ric_{ab} = - (n-1) \ov g_{ab}$, we can compute
\[ (L h)_{ab} = (\DIV_{\ov g}^* \DIV_{\ov g} + d^* d)h_{ab} - \ov R_{astb} h_{st} + (n-1) h_{ab}. \]
Hence if $\ov g$ is the hyperbolic metric, then
\[ (L h)_{ab} = (\DIV_{\ov g}^* \DIV_{\ov g} + d^* d)h_{ab} + \ov g^{ij} h_{ij} \ov g_{ab} + (n-2) h_{ab}. \]
Thus in a setting where we can apply Stokes' Theorem, we have $L \geq n-2$ in the $L^2$-sense:
\begin{equation} \langle Lh, h \rangle = \Vert {\DIV_{\ov g}} \Vert_{L^2}^2 + \Vert d h \Vert_{L^2}^2 + \Vert {\tr_{\ov g} h} \Vert_{L^2}^2 + (n-2) \Vert h \Vert_{L^2}^2 \geq  (n-2) \Vert h \Vert_{L^2}^2. \label{eq:Lgeqnm2}
\end{equation}

Finally, we compute the action of the Einstein operator on $\Tor^{n-1}$-invariant sections on a hyperbolic cusp.
Let $(N = \IR \times \Tor^{n-1}, \ov g)$ be a cusp with coordinates $(s,x_2, \ldots x_n)$ and $\ov g$ defined as in (\ref{eq:hypcusp}).
Assume that $h \in \Sym_2 T^*N$ is $\Tor^{n-1}$-invariant, i.e. of the form 
\[ h = h_{11} dx_1 dx_1 +  e^{-s} \sum_{i=2}^n h_{1i} (dx_1 dx_i + dx_i dx_1) + e^{-2s} \sum_{i,j = 2}^n h_{ij} dx_i dx_j, \]
where the coefficients $h_{ij}$ depend only on $s$.
Then we can express
\[ Lh = (Lh)_{11} dx_1 dx_1 +  e^{-s} \sum_{i=2}^n (Lh)_{1i} (dx_1 dx_i + dx_i dx_1) + e^{-2s} \sum_{i,j = 2}^n (Lh)_{ij} dx_i dx_j, \]
where the coefficients $(Lh)_{ij}$ can be computed as follows $(i,j>1)$
\begin{subequations}
\begin{alignat}{3}
 -(Lh)_{11} &= h''_{11} &&- (n-1) h'_{11} &&- 2(n-1) h_{11} \label{eq:Linv1} \\
 -(Lh)_{1i} &= h''_{1i} &&- (n-1) h'_{1i} &&- n h_{1i} \label{eq:Linv2} \\
 -(Lh)_{ij} &= h''_{ij} &&- (n-1) h'_{ij} &&- 2 \delta_{ij} {\textstyle \sum_{k=2}^n} h_{kk} \label{eq:Linv3}
\end{alignat}
\end{subequations}

\subsection{A result from harmonic analysis}
Let $\Phi \in C^\infty (\IR \times \IR_+), \Phi(x,t) = (4 \pi t)^{-1/2} \exp (- \frac{x^2}{4t} )$ be the one dimensional heat kernel, i.e. $\partial_t \Phi = \partial^2_x \Phi$.
We will need the following result from harmonic analysis (see \cite{KrySob} or \cite{Ste}):
\begin{Lemma} \label{Lem:CZ}
Assume that $r > 0$ and set $\Omega = [-r,r] \times [0,r^2]$.
For every $f \in C^\infty_0 (\Omega)$ we can compute the convolution $\Phi'' * f$ and restrict it to $\Omega$ (prime denotes differentiation by $x$).
For every $1<p<\infty$ this induces a map
\[ \Phi'' * : \;\; L^p (\Omega) \longrightarrow L^p (\Omega) \]
and
\[ \Vert \Phi'' * f \Vert_{L^p (\Omega)} \leq C(p) \Vert f \Vert_{L^p (\Omega)}. \]
The same is true for the operator $\ov \Phi'' *$, where $\ov\Phi(x,t) = e^{-\zeta t} \Phi(x,t)$ for any $\zeta \geq 0$.
\end{Lemma}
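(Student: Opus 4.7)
The plan is to recognize $\Phi'' * f$ as the second spatial derivative of the solution to the inhomogeneous heat equation with source $f$ and zero initial data, and then invoke the parabolic Calder\'on--Zygmund theory. Extend $f$ by zero to all of $\IR \times \IR_+$; then $u = \Phi * f$ satisfies $\partial_t u - \partial_x^2 u = f$ with $u(\cdot,0)=0$, and $\Phi'' * f = \partial_x^2 u$. The desired inequality is thus the classical parabolic $L^p$ maximal regularity estimate in one space dimension.

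First I would prove the $L^2$ bound by Plancherel. Extending $f$ by $0$ to $\IR \times \IR$ and taking the full space-time Fourier transform, the symbol of the operator $f \mapsto \partial_x^2 u$ is $\xi^2/(-i\tau + \xi^2)$, whose modulus is bounded by $1$. Hence $\Vert \Phi'' * f \Vert_{L^2} \leq \Vert f \Vert_{L^2}$ on the whole plane, and restriction to $\Omega$ only decreases the norm. For the modified kernel $\ov\Phi(x,t) = e^{-\zeta t}\Phi(x,t)$, which is the fundamental solution of $\partial_t - \partial_x^2 + \zeta$, the symbol becomes $\xi^2/(-i\tau + \xi^2 + \zeta)$, still uniformly bounded by $1$ for every $\zeta \geq 0$, so the $L^2$ step goes through verbatim.

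Second I would verify that $\Phi''$ is a Calder\'on--Zygmund kernel with respect to the parabolic metric $d((x,t),(y,s)) = \max(|x-y|, |t-s|^{1/2})$, for which the Lebesgue measure is doubling and parabolic balls of radius $r$ have volume comparable to $r^3$. A direct computation gives
\[ \Phi''(x,t) = (4\pi t)^{-1/2} e^{-x^2/(4t)} \Big( \tfrac{x^2}{4t^2} - \tfrac{1}{2t} \Big), \]
from which one reads off the size bound $|\Phi''(x,t)| \leq C\, d((x,t),0)^{-3}$ and, after one further differentiation combined with the Gaussian decay, the H\"ormander-type smoothness condition
\[ \int_{d((y,s),0) \geq 2 d((x,t),0)} | \Phi''(x+y, t+s) - \Phi''(y,s) | \, dy\, ds \leq C. \]
Cancellation comes for free from $\Phi'' = \partial_t \Phi$: for each fixed $t > 0$ one has $\int_\IR \Phi''(x,t)\, dx = 0$. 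For $\ov\Phi$ the extra factor $e^{-\zeta t}$ only improves these bounds.

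Third, I would invoke the standard Calder\'on--Zygmund theorem on the parabolic space of homogeneous type (as in \cite{Ste} or \cite{KrySob}): combined with the $L^2$-bound, the kernel conditions above yield weak $(1,1)$ boundedness, and Marcinkiewicz interpolation plus duality give boundedness on $L^p$ for all $1 < p < \infty$, first on $\IR \times \IR_+$ and then, by restriction, on $\Omega$. The only real obstacle is the verification of the H\"ormander condition; this is a routine but slightly tedious computation exploiting that $\partial_x \Phi''$ picks up at most a factor of $|x|/t$ against the Gaussian, which is controlled by the parabolic scaling. Everything else is either a Fourier-side one-liner or a citation of the standard machinery.
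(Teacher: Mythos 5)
Your proposal is correct, and at the level of architecture it is the same strategy as the paper's: an $L^2$ bound for the operator $f \mapsto \Phi''*f$, a parabolic Calder\'on--Zygmund argument giving weak $(1,1)$, Marcinkiewicz interpolation for $1<p\leq 2$, and duality for $2\leq p<\infty$. The differences are in execution. For the $L^2$ step the paper avoids the Fourier transform: it writes $h=\Phi'*f$, notes $\dot h-h''=f'$, and gets $\Vert \Phi''*f\Vert_{L^2}\leq\Vert f\Vert_{L^2}$ from a one-line integration-by-parts energy identity, whereas you read the bound off the multiplier $\xi^2/(i\tau+\xi^2)$ by Plancherel; both are fine, and your version handles $\ov\Phi$ uniformly in $\zeta$ for free. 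For the weak $(1,1)$ step the paper runs the Calder\'on--Zygmund decomposition by hand on parabolic cubes $Q_i\subset\Omega$, using $\int_{Q_i}b=0$ together with the pointwise difference bound $r_i|\Phi'''|+r_i^2|\partial_t\Phi''|$ with Gaussian decay, while you verify the size bound $|\Phi''|\lesssim d^{-3}$ and the H\"ormander condition for the parabolic quasi-metric and cite the general theorem on spaces of homogeneous type; this is legitimate and shorter, at the cost of importing the abstract machinery the paper chose to reproduce. Two small remarks: the spatial cancellation $\int_\IR\Phi''(x,t)\,dx=0$ is not actually needed once you have the $L^2$ bound, so that sentence is harmless but superfluous; and the claim that the factor $e^{-\zeta t}$ ``only improves'' the kernel bounds is slightly glib for the smoothness condition, since $\partial_t(e^{-\zeta t}\Phi'')$ produces an extra term $\zeta e^{-\zeta t}\Phi''$ --- uniformity of the H\"ormander constant in $\zeta$ then needs the exponential/Gaussian decay of the kernel (or one simply accepts a $\zeta$-dependent constant, which suffices for the paper's application where $\zeta\in\{n,2(n-1)\}$ is fixed).
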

\begin{proof}
We will only discuss the operator $\Phi'' *$ here.
The proof for $\ov \Phi''$ goes along the lines.

In the first step we prove the Lemma for $p=2$:
Let $f \in C^\infty_0 (\Omega)$ and $h = \Phi * f' = \Phi' * f \in C^\infty(\IR \times [0,r^2])$.
Since $\dot h - h'' = f'$ we obtain for every time slice
\[  \partial_t \int_{\IR} h^2 + 2 \int_{\IR} (h')^2 =  2 \int_{\IR}  h f' = - 2 \int_{\IR} h' f \leq \int_{\IR} (h')^2 + \int_{\IR} f^2. \]
Integrating both sides from $0$ to $r^2$ yields
\[ \Vert h' \Vert^2_{L^2(\Omega)} \leq  \Vert f \Vert^2_{L^2(\Omega)}. \]

In the second step we prove that the operator $\Phi'' *$ is weak $(1,1)$.
Assume again that $f \in C^\infty_0(\Omega)$ and set $k = \Phi'' * f$.
Let $\alpha > 0$ be arbitrary.
If $\alpha \leq \frac1{2r^3} \Vert f \Vert_{L^1(\Omega)}$, then trivially $|\{ |k| > \alpha \}| \leq | \Omega | = 2 r^3 \leq \alpha^{-1} \Vert f \Vert_{L^1(\Omega)}$.
Assume now $\alpha > \frac1{2r^3} \Vert f \Vert_{L^1(\Omega)}$ and consider the Calder\'on-Zygmund decomposition $f = g+b$ with
\begin{enumerate}[(a)]
\item $|g| < \alpha$ on $\Omega$ and $\Vert g \Vert_{L^1(\Omega)} \leq \Vert f \Vert_{L^1(\Omega)}$.
\item $\supp b \subset B = \bigcup_{i=1}^N Q_i$ where the $Q_i$ are parabolic domains of the form $Q_i = [x_i - r_i, x_i + r_i] \times [t_i, t_i + r^2_i] \subset \Omega$ which are pairwise disjoint except for intersection of their boundary.
\item $\int_{Q_i} b = 0$ and $\int_{Q_i} |b| \leq 2 |Q_i| \alpha$.
\item $|B| \leq \frac{8}{\alpha} \Vert f \Vert_{L^1(\Omega)}$.
\item $g,b$ are smooth on an open dense subset of $\Omega$ of full measure.
\end{enumerate}
Then $k= k_g + k_b$ where $k_g = \Phi'' * g$ and $k_b = \Phi'' * b$.
Moreover $|\{ |k| > \alpha \}| \leq |\{ |k_g| > \alpha/2 \}| + | \{ |k_b| > \alpha/2 \}|$.
Since $\Vert g \Vert_{L^2(\Omega)} \leq \alpha^{1/2} \Vert g \Vert^{1/2}_{L^1(\Omega)}$, we get using the first step that
\[ | \{ |k_g| > \alpha/2 \} | \leq \frac4{\alpha^2} \Vert k_g \Vert^2_{L^2(\Omega)} \leq \frac{4}{\alpha} \Vert g \Vert_{L^1(\Omega)} \leq \frac{4}{\alpha} \Vert f \Vert_{L^1(\Omega)}. \]

We will now analyze $k_b$.
For every $Q_i = [x_i - r_i, x_i + r_i] \times [t_i, t_i + r_i^2]$ set $Q'_i = [x_i - 2 r_i, x_i + 2 r_i] \times [t_i, t_i + 4 r_i^2] \cap \Omega$ and let $B' = \bigcup_{i=1}^N Q'_i$.
Obviously, $|Q'_i| \leq 8 |Q_i|$ and $|B'| \leq 8 |B| < \frac{64}{\alpha} \Vert f \Vert_{L^1(\Omega)}$.
Decompose $b = b_1 + \ldots + b_N$ where $b_i = \chi_{Q_i} b$.
Then $k_b = k_{b_1} + \ldots + k_{b_N}$.
Fix one $i$ for the moment and consider a point $(x,t) \in \Omega \setminus Q'_i$.
By (c) we have
\[ k_{b_i}(x,t) = \int_{Q_i} \bigl( \Phi''(x-x',t-t') - \Phi''(x-x_i,t-t_i) \bigr) b_i(x',t') d x' d t'. \]
Using the fact that the absolute value of the difference in the parentheses is bounded by
\begin{multline*}
 \sup_{(x',t') \in Q_i} \left( r_i | \Phi''' |(x-x',t-t') + r_i^2 | \partial_t \Phi'' |(x-x',t-t') \right) \displaybreak[1] \\
 \leq r_i \frac{C}{(t - t_i +r_i^2)^{3/2}} r_i^{-1} \exp(-c|x-x_i|/r_i) + r_i^2 \frac{C}{(t-t_i+r_i^2)^2} r_i^{-1} \exp(-c|x-x_i|/r_i),
\end{multline*}
we find
\[ |k_{b_i}|(x,t) \leq \int_{Q_i} |b| \cdot \left( \frac{C r_i}{(t-t_i+r_i^2)^{3/2}} + \frac{Cr_i^2}{(t-t_i+r_i^2)^2} \right) r_i^{-1} \exp(-c|x-x_i|/r_i) \]
Hence, since the second factor is bounded in $L^1$ independently of $r_i$, Young's inequality yields
\[ \int_{\Omega \setminus Q'_i} |k_{b_i}| \leq C \int_{Q_i} |b| \leq 2 C \alpha |Q_i|.  \]
And thus $\int_{\Omega \setminus B'} |k_b| \leq C \alpha |B| \leq C \Vert f \Vert_{L^1(\Omega)}$.
This implies $|\{ |k_b| > \alpha/2 \}| \leq |B'| + \frac{C}{\alpha} \Vert f \Vert_{L^1(\Omega)} \leq \frac{C}{\alpha} \Vert f \Vert_{L^1(\Omega)}$.
Putting both terms together, we finally get $| \{ |k| > \alpha \}| \leq \frac{C}{\alpha} \Vert f \Vert_{L^1(\Omega)}$.

Having established that the operator $\Phi'' *$ is strong $(2,2)$ and weak $(1,1)$, we conclude by the Marcinkiewicz interpolation theorem that it is strong $(p,p)$ for all $1< p \leq 2$.
The rest of the Lemma follows by duality:
For every $1<p\leq 2$ and conjugate $2 \leq p^* < \infty$, we have with $\Phi_- (x,t) = \Phi(x,-t)$
\[ \langle \Phi'' * f_1, f_2 \rangle_{\Omega} = \langle f_1, \Phi_-'' * f_2 \rangle_{\Omega} \leq C \Vert f_1 \Vert_{L^{p^*}(\Omega)} \Vert f_2 \Vert_{L^p(\Omega)}. \]
Thus, the result is also true for $p^*$.
\end{proof}

\subsection{Derivative bounds for linear and nonlinear parabolic equations}
We recall an a priori derivative estimate for linear or a certain type of nonlinear parabolic equations.
If $\Omega \subset \IR^n \times \IR$ denotes some parabolic neighborhood in space-time (e.g. $\Omega = B_r(0) \times [0,T]$), then we will denote by $C^{2m;m}(\Omega)$ the space of scalar functions on $\Omega$ which are $i$ times differentiable in spatial direction and $j$ times differentiable in time direction if $i + 2j \leq 2m$.
For $\alpha \in (0,\frac12)$, the corresponding H\"older space will be denoted by $C^{2m, 2\alpha; m, \alpha}(\Omega)$.

In order to present our results in a scale invariant way, we will use the following weights to define the H\"older norm on $C^{2m, 2\alpha; m, \alpha}(\Omega)$:
Assume 
\begin{equation*} r = \min \{ r' \; : \; \text{$\Omega \subset B_{r'}(p) \times [t-(r')^2, t]$ for some $p$, $t$} \} < \infty. \end{equation*}
Then set
\[ \Vert u \Vert_{C^{2m, 2\alpha; m, \alpha}(\Omega)} = \sum_{|\iota|+2k \leq 2m} r^{|\iota|+2k} (\Vert D^\iota \partial_t^k u \Vert_{C^0} + r^{2\alpha} [ D^\iota \partial_t^k u ]_{2\alpha,\alpha} ), \]
where $\iota$ runs over products of spatial derivatives.

Set $B_r = B_r(0) \subset \IR^n$.

\begin{Proposition} \label{Prop:Shi}
 Let $r > 0$ and consider the parabolic neighborhoods $\Omega = B_r \times [-r^2,0]$ and $\Omega' = B_{2r} \times [-4r^2,0]$.
 
 Assume that $u \in C^{2;1}(\Omega')$ satisfies the equation
 \begin{multline} (\partial_t - L) u = Q[u] = r^{-2} f_1 (r^{-1} x, u) \cdot u + r^{-1} f_2(r^{-1} x, u) \cdot \nabla u \\ + f_3(r^{-1} x, u) \cdot \nabla u \otimes \nabla u + f_4(r^{-1} x, u) \cdot u \otimes \nabla^2 u, \label{eq:Shiequ}
 \end{multline}
 where $f_1, \ldots, f_4$ are smooth functions in $x$ and $u$ such that $f_2, f_3, f_4$  can be paired with the tensors $u \otimes \nabla u$, $\nabla u \otimes \nabla u$ resp. $u \otimes \nabla^2 u$.
 Assume that the linear operator $L$ has the form
 \begin{equation} \label{eq:defofL}
  L u = a_{ij}(x) \partial_{ij}^2 u + b_i(x) \partial_i u + c(x) u.
 \end{equation}
 
 Now assume that we have the following bounds for $m \geq 1$, $\alpha \in (0,\frac12)$:
 \begin{equation} 
 \begin{split} \frac{1}{\Lambda} < a_{ij} < \Lambda, \quad \Vert a_{ij} \Vert_{C^{2m-2, 2\alpha; m-1, \alpha}(\Omega')} < \Lambda, \\ \quad \Vert b_{i} \Vert_{C^{2m-2, 2\alpha; m-1, \alpha}(\Omega')} < r^{-1} \Lambda, \quad \Vert c \Vert_{C^{2m-2, 2\alpha; m-1, \alpha}(\Omega')} < r^{-2} \Lambda. \label{eq:Lambdaest}
 \end{split}
 \end{equation}
 
 Then there are constants $\varepsilon_m > 0$ and $C_m < \infty$ depending only on $\Lambda$, $\alpha$, $n$, $m$ and the $f_i$ such that if
 \[ H = \Vert u \Vert_{L^\infty(\Omega')}  < \varepsilon_m, \]
 then
 \[ \Vert u \Vert_{C^{2m, 2\alpha; m, \alpha}(\Omega)} < C_m H . \]
 
 Moreover, the Proposition still holds if $u$ is vector-valued.
 In this case $a_{ij}$, $b_i$, $c$, $f_1, \ldots, f_4$ have to be tensors of the appropriate shape and we need to assume that for each $i, j$ the coefficient $a_{ij}$ is a multiple of the identity matrix.
\end{Proposition}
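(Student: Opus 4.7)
The plan is a standard parabolic Schauder bootstrap, carried out by induction on $m$. Because the hypotheses and the weighted Hölder norms are all invariant under the parabolic rescaling $x \mapsto r x$, $t \mapsto r^2 t$, $u \mapsto u$, I would first reduce to the case $r = 1$; the constants $\varepsilon_m$ and $C_m$ then rescale back automatically. From this point on all norms are the usual unweighted parabolic Hölder norms.

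For the base case $m=1$, the key observation is that the quasilinear term $f_4(x,u)\cdot u \otimes \nabla^2 u$ can be absorbed into the principal part once $\varepsilon_1$ is small. Writing the equation as $(\partial_t - \widetilde L)u = \widetilde Q[u]$ with $\widetilde L u = \widetilde a_{ij}\partial_{ij}^2 u + b_i \partial_i u + c u$ where $\widetilde a_{ij} = a_{ij} + O(|u|)$, the new principal coefficients are still uniformly elliptic and satisfy a bound of the form (\ref{eq:Lambdaest}) with a slightly larger $\Lambda$, and
\[
\widetilde Q[u] = f_1(x,u)\cdot u + f_2(x,u)\cdot \nabla u + f_3(x,u)\cdot \nabla u \otimes \nabla u
\]
is at most first order in $u$. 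Treating the equation as a linear parabolic equation with right-hand side $\widetilde Q[u] \in L^\infty$ of size $O(H + |\nabla u|^2)$, Krylov--Safonov yields a $C^{0,2\alpha';0,\alpha'}$ bound on a slightly smaller subdomain; parabolic Schauder estimates applied on nested subdomains then upgrade this to a $C^{2,2\alpha;1,\alpha}$ bound, with the $|\nabla u|^2$ nonlinearity absorbed by interpolation using $H < \varepsilon_1$.

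For the inductive step, assume the estimate holds up to order $m-1 \geq 1$. Let $v = \partial u$ be a first spatial derivative of $u$ (the time derivative case reduces to this via $\partial_t u = Lu + Q[u]$). Differentiating (\ref{eq:Shiequ}) produces an equation $(\partial_t - L') v = Q'[v]$ where $L'$ differs from $L$ only by lower-order modifications plus a second-order perturbation of size $O(|u|)$ coming from $\partial(f_4\cdot u \otimes \nabla^2 u)$, and $Q'[v]$ has the same schematic form as $Q$ with new smooth coefficients depending on $(x,u,v)$ as well as on $\nabla u$, $\nabla^2 u$, $a_{ij}$, $b_i$, $c$ and their first derivatives. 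These latter quantities are already controlled by the $m=1$ estimate applied to $u$ together with (\ref{eq:Lambdaest}) at level $m$, so $L'$ satisfies (\ref{eq:Lambdaest}) at level $m-1$ and $|v| \leq C_1 H$ is small. Applying the inductive hypothesis to $v$ on a slightly smaller parabolic neighborhood gives $v \in C^{2m-2,2\alpha;m-1,\alpha}$, hence $u \in C^{2m,2\alpha;m,\alpha}(\Omega)$ with the claimed bound.

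The main technical obstacle is handling the quasilinear term $f_4\cdot u \otimes \nabla^2 u$ uniformly through the induction. Differentiating it produces both an irrelevant piece $f_4\cdot u \otimes \nabla^2 v$ (still absorbable into $\widetilde L$ thanks to $|u|<\varepsilon_m$) and a more delicate piece $f_4\cdot \nabla u \otimes \nabla^2 u$, whose highest-order factor $\nabla^2 u$ is \emph{already controlled} by the previously established inductive bound and therefore enters only as a coefficient in $Q'[v]$. To make this absorption work uniformly, one must choose the shrinking chain of nested parabolic subdomains and the sequence $\varepsilon_1 \geq \varepsilon_2 \geq \ldots$ carefully, so that the bound $|u|,|\nabla u|,\ldots < \varepsilon_m$ needed at step $m$ is implied by the bound obtained at step $m-1$ on a slightly larger domain. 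Aside from this bookkeeping, the argument is the standard parabolic Schauder--Krylov--Safonov bootstrap.
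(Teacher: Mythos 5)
Your scalar argument is the standard quasilinear bootstrap and could probably be made to work, but as written it has a genuine gap exactly where the Proposition is actually needed: the \emph{vector-valued} case. Your base step rests on two moves that are unavailable for systems. First, Krylov--Safonov (and the $C^{0,\alpha}$ estimate it provides) is a scalar result; it fails for parabolic systems in general, and the Proposition is applied in the paper to the tensor $h_t$, i.e.\ to a system. Second, absorbing $f_4(x,u)\cdot u\otimes\nabla^2 u$ into the principal part replaces $a_{ij}\,\mathrm{id}$ by matrix-valued coefficients $a_{ij}\,\mathrm{id}+O(|u|)$, which destroys precisely the ``multiple of the identity'' structure that the statement imposes so that scalar-type estimates can be used componentwise; after this absorption neither Krylov--Safonov nor a componentwise Schauder estimate applies. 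A further soft spot even in the scalar case: your Krylov--Safonov step treats $f_3\cdot\nabla u\otimes\nabla u$ as an $L^\infty$ right-hand side ``of size $O(H+|\nabla u|^2)$'', but $|\nabla u|$ has no a priori bound at that stage; handling quadratic gradient growth in the $C^\alpha$ estimate requires an extra argument (smallness of $\Vert u\Vert_{L^\infty}$ relative to the quadratic growth, or an interpolation/absorption scheme set up before any pointwise control of $\nabla u$ exists), which you only gesture at.

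The paper avoids all of this by never leaving the Schauder world and never touching the principal part: it keeps the entire nonlinearity $Q[u]$, including $f_4\cdot u\otimes\nabla^2 u$, on the right-hand side and uses only the interior H\"older estimate of Lemma \ref{Lem:KryHoe} (Krylov's Theorem 8.11.1), which is valid for systems whose leading coefficients are scalar multiples of the identity. The nonlinearity is then absorbed not by modifying the operator but by a quantitative iteration over the nested domains $\Omega_k=B_{r_k}\times[-r_k^2,0]$, $r_k=2-2^{-k}$, with degenerating weights $\theta_k$: the weighted norms $a_k=\Vert u\Vert^{(\theta_k)}_{C^{2m,2\alpha;m,\alpha}(\Omega_k)}$ satisfy a recursion $a_k\le C(\theta_{k+1}a_{k+1}+a_{k+1}^2+a_{k+1}^{2m}+a_{k+1}^{2m+1}+H)$, and since $a_k\to\Vert u\Vert_{C^0}\le H$ the recursion is closed by a downward induction once $H<\varepsilon_m$, giving all orders $m$ at once without a separate induction on $m$ or any $C^\alpha$ estimate for rough coefficients. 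If you want to salvage your route you would have to either restrict to the scalar case or replace the Krylov--Safonov step and the coefficient absorption by an argument that respects the identity structure of the principal part --- at which point you essentially arrive at the paper's scheme.
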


Observe that for $f_i = 0$, this includes the linear case.
In the following proof, we will for simplicity always assume that $u$ is a scalar function.
The vector-valued case follows by exactly the same arguments (note that we can even still use Lemma \ref{Lem:KryHoe} for the scalar case, since we can actually include the terms $b_i(x) \partial_i u$ and $c(x) u$ into the nonlinear terms involving $f_2$ resp. $f_1$).
In order to prove Proposition \ref{Prop:Shi}, we will need the following

\begin{Lemma} \label{Lem:KryHoe}
 Assume that $\Omega \subset \Omega'$ are defined as in Proposition \ref{Prop:Shi} and that (\ref{eq:defofL}) and (\ref{eq:Lambdaest}) hold.
 
 Then if $u \in C^{2;1}(\Omega')$ satisfies the equation
 \[ (\partial_t - L) u = f, \]
 we have the interior bound
 \[ \Vert u \Vert_{C^{2m,2\alpha;m,\alpha}(\Omega)} \leq C_m (r^2 \Vert f \Vert_{C^{2m-2, 2\alpha; m-1, \alpha}(\Omega')} + \Vert u \Vert_{C^0(\Omega')} ). \]
 Here $C_m$ depends only on $\Lambda$, $\alpha$ and $n$.
\end{Lemma}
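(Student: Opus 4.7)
The plan is to reduce the statement to the standard interior parabolic Schauder estimate by a scaling argument, and then bootstrap in the order of differentiability by differentiating the equation and iterating.

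First I would normalize $r=1$ by the parabolic rescaling $(x,t)\mapsto(rx,r^{2}t)$. By the way in which the weights $r^{|\iota|+2k}$ and $r^{2\alpha}$ are built into $\Vert\cdot\Vert_{C^{2m,2\alpha;m,\alpha}}$, this rescaling turns the scaled Hölder norm on $\Omega$ into the ordinary one on $B_{1}\times[-1,0]$, it turns the bounds (\ref{eq:Lambdaest}) into $\Vert a_{ij}\Vert,\Vert b_i\Vert,\Vert c\Vert\leq\Lambda$ with the ordinary $C^{2m-2,2\alpha;m-1,\alpha}$ norm, and it turns the right-hand side of the claimed inequality into $C_m(\Vert f\Vert_{C^{2m-2,2\alpha;m-1,\alpha}}+\Vert u\Vert_{C^0})$. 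So it is enough to prove the estimate for $r=1$, and for a nested sequence of shrinking cylinders $\Omega\subset\Omega_{1}\subset\cdots\subset\Omega_{m}=\Omega'$ which I fix once and for all.

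The base case $m=1$ is the classical interior parabolic Schauder estimate for operators whose leading coefficients are Hölder continuous. I would prove it in the usual two-step way: (i) for the constant-coefficient model $\partial_t-a_{ij}(x_{0})\partial^{2}_{ij}$, one reduces after an affine change of variable to the heat operator, solves by convolution with the heat kernel on a cutoff of $u$, and obtains the $C^{2,2\alpha;1,\alpha}$ bound from the Campanato-type characterization of Hölder spaces combined with the Calderón–Zygmund $L^p$ bound for $\Phi''*$ supplied by Lemma \ref{Lem:CZ}; (ii) the variable-coefficient case follows by freezing $a_{ij}$ at each point, absorbing the error terms $(a_{ij}(x)-a_{ij}(x_{0}))\partial^{2}_{ij}u$ and the lower-order terms $b_i\partial_i u$, $cu$ into the inhomogeneity, and running the standard covering/iteration argument to recover the $C^{2,2\alpha;1,\alpha}$ bound on the smaller cylinder. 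This yields
\[\Vert u\Vert_{C^{2,2\alpha;1,\alpha}(\Omega_{1})}\leq C_{1}\bigl(\Vert f\Vert_{C^{0,2\alpha;0,\alpha}(\Omega')}+\Vert u\Vert_{C^{0}(\Omega')}\bigr).\]

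For the inductive step, assume the lemma is known up to order $m-1$ and let $v=\partial_{i}u$ for some spatial index $i$. Differentiating the equation gives
\[(\partial_{t}-L)v=\partial_{i}f+(\partial_{i}a_{jk})\partial^{2}_{jk}u+(\partial_{i}b_{j})\partial_{j}u+(\partial_{i}c)u,\]
and the commutator terms lie in $C^{2m-4,2\alpha;m-2,\alpha}$ with norm controlled by the coefficient bounds and by $\Vert u\Vert_{C^{2m-2,2\alpha;m-1,\alpha}(\Omega_{m-1})}$, which is already controlled by the previous stage of the induction. Applying the case $m-1$ of the lemma to $v$ on the pair $\Omega_{m-2}\subset\Omega_{m-1}$ yields $\Vert v\Vert_{C^{2m-2,2\alpha;m-1,\alpha}(\Omega_{m-2})}$, hence all spatial derivatives of $u$ of order $2m$ are controlled by the desired right-hand side. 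Pure and mixed time derivatives are recovered algebraically from $\partial_{t}u=Lu+f$ and repeated differentiation of this identity, which expresses each $\partial_{t}^{k}$ in terms of spatial derivatives of $u$ and of $f$ with coefficients built from $a_{ij},b_i,c$; the $C^{2m-2,2\alpha;m-1,\alpha}$ bounds on the coefficients are exactly what is needed to keep the required Hölder regularity throughout. After finitely many such steps we arrive at the $C^{2m,2\alpha;m,\alpha}$ bound on $\Omega$.

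The main obstacle is really the base case: establishing interior $C^{2,2\alpha;1,\alpha}$ regularity from an $L^{\infty}$ bound on $u$ and a $C^{0,2\alpha;0,\alpha}$ bound on $f$ requires the full Calderón–Zygmund/Campanato machinery for the heat operator, and this is precisely the content of Lemma \ref{Lem:CZ}. Once the $m=1$ case is in hand, the higher-order estimates are a routine commutator-and-induction argument, with the only care being to shrink the parabolic domain slightly at each differentiation so that the interior Schauder estimate can be reapplied.
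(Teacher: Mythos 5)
Your overall structure is exactly the paper's: the paper disposes of the lemma in two lines, quoting Theorem 8.11.1 of Krylov's H\"older-space book for the case $m=1$ and obtaining $m>1$ ``by differentiation,'' which is precisely your scaling-plus-induction scheme (the scale-invariant weights in the norm are indeed designed so that the $r=1$ case suffices, and your commutator bookkeeping for $\partial_i u$ and the algebraic recovery of time derivatives from $\partial_t u = Lu+f$ is the intended content of ``by differentiation''). The one place where your plan diverges is that you propose to re-prove the $m=1$ interior Schauder estimate from scratch, and there your sketch leans on an input that is not available: Lemma \ref{Lem:CZ} in this paper is a Calder\'on--Zygmund bound only for the \emph{one-dimensional} heat kernel $\Phi(x,t)=(4\pi t)^{-1/2}e^{-x^2/4t}$ on $\IR\times\IR_+$ (it is built for the cusp analysis of section \ref{sec:invRF}, where everything depends on a single spatial variable), so it does not supply the $L^p$ or Campanato machinery for the operator $\partial_t-a_{ij}\partial^2_{ij}$ on a ball in $\IR^n$. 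The base case is of course a classical fact and can be established by the freezing-coefficients argument you outline, but with heat-kernel/Campanato estimates in $\IR^n$ rather than with Lemma \ref{Lem:CZ}; the paper sidesteps this entirely by citing Krylov, which is the cleaner course here. With that substitution (cite the standard interior parabolic Schauder estimate, or prove it with $n$-dimensional tools), your argument goes through and coincides with the paper's.
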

\begin{proof}
 For $m=1$, the Lemma is exactly the same as Theorem 8.11.1 in \cite{KryHoe} and for $m>1$ it follows by differentiation.
\end{proof}

\begin{proof}[Proof of Proposition \ref{Prop:Shi}]
We derive a slightly stronger statement from Lemma \ref{Lem:KryHoe}.
In order to do this, we introduce a new weighted norm for $0 < \theta \leq 1$:
\[ \Vert u \Vert^{(\theta)}_{C^{2m, 2\alpha; m, \alpha}(\Omega)} = \sum_{|\iota|+2k \leq 2m} (r\theta)^{|\iota|+2k} (\Vert D^\iota \partial_t^k u \Vert_{C^0} + (r\theta)^{2\alpha} [ D^\iota \partial_t^k u ]_{2\alpha,\alpha} ). \]
Observe that for $\theta = 1$, this norm agrees with the previous norm.
Applying the Lemma to any ball $B_{\theta r} (p) \subset B_r$, we can deduce
\begin{flushleft}
\begingroup \it
\leftskip=1cm 
\noindent
 Assume we are in the setting of Lemma \ref{Lem:KryHoe} except that now $\Omega' = B_{(1+\theta)r} \times [-(1+\theta)^2 r^2,0]$.
 Then
 \[ \qquad \Vert u \Vert^{(\theta)}_{C^{2m,2\alpha;m,\alpha}(\Omega)} \leq C_m \bigr( (r\theta)^2 \Vert f \Vert^{(\theta)}_{C^{2m-2, 2\alpha; m-1, \alpha}(\Omega')} + \Vert u \Vert_{C^0(\Omega')}  \bigr). \]
\par
\endgroup
\end{flushleft}
Now consider the setting of Proposition \ref{Prop:Shi}.
By scaling invariance, we can assume $r=1$.
In the following, we will abbreviate every constant which only depends on $\Lambda$, $\alpha$, $n$, $m$ and the $f_i$ by $C$.

Set $r_k = \sum_{i=0}^k 2^{-i} = 2- 2^{-k}$, $\theta_k = \frac{r_{k+1}}{r_k} - 1$ and $\Omega_k = B_{r_k}(0) \times [-r_k^2,0]$.
By (\ref{eq:Shiequ})
\[ a_k := \Vert u \Vert_{C^{2m,2\alpha;m,\alpha}(\Omega_k)}^{(\theta_k)} \leq C \bigr( \theta_k^2 \Vert Q[u] \Vert_{C^{2m-2,2\alpha;m-1,\alpha}(\Omega_{k+1})}^{(\theta_{k+1})} + H \bigr). \]
Observe that since $\theta_k \to 0$, we have $a_k \to a_{\infty} = \Vert u \Vert_{C^0(\Omega')} \leq H$.
We now estimate $Q[u]$ in terms of $u$ using (\ref{eq:Shiequ}).
For this note that for $i = 1, \ldots, 4$
\[ \Vert f_i(x,u) \Vert^{(\theta_{k+1})}_{C^{2m-2,2\alpha; m-1, \alpha}(\Omega_{k+1})} \leq C \big( 1 + \big( \Vert u \Vert^{(\theta_{k+1})}_{C^{2m-2,2\alpha;m-1,\alpha}(\Omega_{k+1})} \big)^{2m-1} \big) \leq C (1+a_{k+1}^{2m-1}). \]
So
\begin{multline*}
 \Vert f_1 \cdot u \Vert^{(\theta_{k+1})}_{C^{2m-2,2\alpha;m-1,\alpha}(\Omega_{k+1})}
  \leq C \Vert f_1 \Vert^{(\theta_{k+1})}_{C^{2m-2,2\alpha;m-1,\alpha}(\Omega_{k+1})} \Vert u \Vert^{(\theta_{k+1})}_{C^{2m-2,2\alpha;m-1,\alpha}(\Omega_{k+1})} \\
  \leq C(a_{k+1} + a_{k+1}^{2m}).
\end{multline*}
Similarly
\begin{alignat*}{1}
 \Vert f_2 \cdot \nabla u \Vert^{(\theta_{k+1})}_{C^{2m-2,2\alpha;m-1,\alpha}(\Omega_{k+1})} &\leq C (1+a_{k+1}^{2m-1}) \theta^{-1}_{k+1} \Vert u \Vert^{(\theta_{k+1})}_{C^{2m,2\alpha;m,\alpha} (\Omega_{k+1})} \\
 & \leq C \theta^{-1}_{k+1} (a_{k+1} + a_{k+1}^{2m}), \\
 \Vert f_3 \cdot \nabla u \otimes \nabla u \Vert^{(\theta_{k+1})}_{C^{2m-2,2\alpha;m-1,\alpha}(\Omega_{k+1})} &\leq C \theta^{-2}_{k+1} (a_{k+1}^2 + a_{k+1}^{2m+1}), \\
 \Vert f_4 \cdot  u \otimes \nabla^2 u \Vert^{(\theta_{k+1})}_{C^{2m-2,2\alpha;m-1,\alpha}(\Omega_{k+1})} &\leq C \theta^{-2}_{k+1} (a_{k+1}^2 + a_{k+1}^{2m+1}).
\end{alignat*}
We conclude
\[ \Vert Q[u] \Vert^{(\theta_{k+1})}_{C^{2m-2,2\alpha;m-1,\alpha}(\Omega_{k+1})} \leq C \big( \theta^{-1}_{k+1} a_{k+1} + \theta^{-2}_{k+1} a_{k+1}^2 + \theta_{k+1}^{-1} a_{k+1}^{2m} + \theta^{-2}_{k+1} a^{2m+1}_{k+1} \big). \]
Hence
\[ a_k \leq C(\theta_{k+1} a_{k+1} + a_{k+1}^2 + a_{k+1}^{2m} + a_{k+1}^{2m+1} + H). \]
So the quantity $b_k = a_k/H$ satisfies the following recursion inequality
\begin{equation} b_k \leq C_0 ( \theta_{k+1} b_{k+1} + H b_{k+1}^2 + H^{2m-1} b_{k+1}^{2m} + H^{2m} b_{k+1}^{2m+1} + 1). \label{eq:bn}
\end{equation}
Assume that $C_0 > 1$, set $\varepsilon_m = \varepsilon = \frac1{16 C_0^2}$ and choose $k_0$ such that $\theta_{k + 1} < \varepsilon$ for all $k \geq k_0$.
Since we assumed that $H < \varepsilon$, we get for $k \geq n_0$
\[ b_k \leq \frac1{16} b_{k+1} + \frac1{16 C_0} b_{k+1}^2 + \frac1{16^{2m-1} C_0^{4m-3}} b_{k+1}^{2m} + \frac1{16^{2m} C_0^{4m-1}} b_{k+1}^{2 m +1} + C_0. \]
So if $b_{k+1} < 2C_0$, it follows that $b_k < 2C_0$, too.
Hence by induction and the fact that $b_k \to a_\infty/H \leq 1 < 2 C_0$, it follows that $b_{k_0} < 2C_0$.

Finally, using (\ref{eq:bn}), we can derive a bound $C'$ for $b_0$.
So $a_0 \leq C' H$.
This finishes the proof.
\end{proof}

We will frequently make use of the following consequence of Proposition \ref{Prop:Shi}.
\begin{Corollary} \label{Cor:Shi}
 Let $\tau > 0$ and assume that $(\ov{g} + h_t)_{t \in [0,\tau)}$ satisfies either the modified Ricci deTurck flow equation (\ref{eq:MRdTflow}) or the linearized flow equation $\partial_t h_t + L h_t = 0$ (see (\ref{eq:dhRS})) on a domain $D \subset M$, where $M^n$ denotes any complete Riemannian manifold with boundary.
 Assume moreover, that the $\tau^{1/2}$ tubular neighborhood $D'$ of $\Omega$ does not meet $\partial M$.

 Then for any $m$, there exist constants $\varepsilon_m > 0, C_m < \infty$ depending only on $m$, $n$, $\tau$ and bounds on the curvature tensor of $M$ as well as its derivatives, such that if
\[ H = \Vert h \Vert_{L^\infty(D' \times [0,\tau))} < \varepsilon_m, \]
then
\[ \Vert \nabla^m h_t \Vert_{L^\infty(D)} < C_m t^{-m/2} H \qquad \text{for all $t \in [0,\tau)$}. \]

For the linearized flow equation, we do not have to assume the bound $H < \varepsilon_m$.
\end{Corollary}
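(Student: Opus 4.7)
The plan is to reduce to Proposition \ref{Prop:Shi} by working in local harmonic (or normal) coordinates on parabolic neighborhoods of radius comparable to $\sqrt{t}$, centered at each point of $D$.

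First I would verify that the modified Ricci deTurck equation (\ref{eq:dhRS}) fits the form (\ref{eq:Shiequ}). Expanding the divergence $\nabla^* S[h] = -\nabla_s S^s_{ab}[h]$, the derivative $\nabla_s$ either lands on the factor $g^{sv} - \ov g^{sv}$, producing a term quadratic in $\nabla h$ (since this factor is smooth in $h$ with no constant term), or it lands on $\nabla h$, producing a term of the form $h \cdot \nabla^2 h$ modulated by a smooth function of $h$. Together with $R[h]$, which is already a sum of terms quadratic in $\nabla h$ (note $X^u$ is itself linear in $\nabla h$ modulo smooth functions of $h$), this puts the right hand side in the form $f_3(x,h) \cdot \nabla h \otimes \nabla h + f_4(x,h) \cdot h \otimes \nabla^2 h$ required by Proposition \ref{Prop:Shi}. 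The linear operator $-L$ on the left hand side has, in coordinates, principal symbol $\ov g^{ij} \partial^2_{ij}$ acting diagonally on tensor components, matching (\ref{eq:defofL}); the curvature term in $L$ and the connection terms coming from writing covariant derivatives in coordinates contribute to $b_i(x)\partial_i u + c(x) u$. The linearized equation is simply the case $f_i = 0$.

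Second, I would fix a radius $r_0 > 0$, depending only on $n$ and bounds on the curvature tensor of $\ov g$ and its derivatives, so that on each ball $B_{r_0}(p) \subset M$ there exist harmonic coordinates in which $\ov g$ is uniformly close to the Euclidean metric and has bounded derivatives of all orders. In these coordinates the coefficients $a_{ij}, b_i, c$ of $-L$ satisfy (\ref{eq:Lambdaest}) with a uniform $\Lambda$ (after the scaling $x \mapsto r^{-1} x$ built into the weighted norms).

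Third, for $p \in D$ and $t \in (0,\tau)$ I would set $r = \min\{\sqrt{t}/2,\, r_0\}$ and apply Proposition \ref{Prop:Shi} to the parabolic cylinder $B_r(p) \times [t-r^2, t]$, whose doubled enlargement $B_{2r}(p) \times [t-4r^2, t]$ lies inside $D' \times [0,\tau)$ because $2r \le \sqrt{t} \le \sqrt{\tau}$ and because $D'$ is the $\sqrt{\tau}$-tubular neighborhood of $D$. Provided $H < \varepsilon_m$, the Proposition yields $\Vert h \Vert_{C^{2m,2\alpha;m,\alpha}(B_r(p) \times [t-r^2,t])} \le C_m H$, which by definition of the weighted norm gives $r^m \Vert \nabla^m h_t(p) \Vert \le C_m H$ (converting coordinate to covariant derivatives using bounded Christoffel symbols). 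For $t \le 4 r_0^2$ this yields $\Vert \nabla^m h_t(p) \Vert \le 2^m C_m t^{-m/2} H$, while for $t \in (4 r_0^2, \tau)$ one has $r_0^{-m} \le r_0^{-m} \tau^{m/2} \cdot t^{-m/2}$, absorbing the extra factor into a constant that depends on $r_0$ and $\tau$, hence on curvature bounds and $\tau$ alone. In the linearized case no smallness of $H$ is needed since Proposition \ref{Prop:Shi} applies unconditionally when the nonlinear terms vanish.

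The main subtlety is the first step: one must check that after expanding the quasilinear divergence $\nabla^* S[h]$, no terms involving derivatives of $h$ of order higher than two appear, and that the second-derivative terms always come paired with a factor of $h$ itself (rather than a constant), so that they fit into the $f_4 \cdot u \otimes \nabla^2 u$ slot of (\ref{eq:Shiequ}) and can be absorbed by the smallness assumption $H < \varepsilon_m$. Once this structural observation is made, the rest of the argument is a fairly mechanical localization of Proposition \ref{Prop:Shi}.
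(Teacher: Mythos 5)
Your overall strategy is exactly the paper's: check that (\ref{eq:dhRS}) has the structure (\ref{eq:Shiequ}) (which the paper already records in (\ref{eq:boundRS}), since $S[h]$ vanishes where $h=0$, so $\nabla^*S$ produces only $\nabla h * \nabla h$ and $h*\nabla^2 h$ terms), then localize on parabolic cylinders of radius comparable to $\sqrt{t}$ and apply Proposition \ref{Prop:Shi}, handling the linear equation by homogeneity (the paper rescales to $\delta h_t$, which is the same scaling trick you invoke).

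There is, however, one genuine flaw in your second step: harmonic (or normal) coordinates on a ball $B_{r_0}(p)\subset M$ of a \emph{uniform} radius $r_0$, with uniform closeness to the Euclidean metric, do \emph{not} exist under curvature bounds alone --- a lower bound on the harmonic radius requires in addition a lower bound on the injectivity radius (think of a flat torus with very short circumferences: curvature is zero, yet $B_{r_0}(p)$ is not even diffeomorphic to a Euclidean ball). This matters here, because the whole point of the Corollary --- emphasized in the remark immediately following it --- is that $\varepsilon_m$ and $C_m$ are independent of the injectivity radius: the estimate is applied deep inside the cusps (e.g.\ in Lemma \ref{Lem:oschk} and Theorem \ref{Thm:RdToncusp}), where $\inj\to 0$. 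As written, your constants would degenerate exactly there. The fix is the single extra step the paper takes: ``pass over to a local cover,'' i.e.\ pull the metric and the solution $h_t$ back by $\exp_p\colon B(0,r)\subset T_pM\to M$ (a local isometric immersion whose pulled-back geometry is controlled purely by the curvature bounds); since the equation is local, the lift still solves it, and one then chooses coordinates on the ball in $T_pM$ and applies Proposition \ref{Prop:Shi} there. With that modification your argument coincides with the paper's proof.
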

Observe that $\varepsilon_m, C_m$ are in particular independent of the injectivity radius of $M$.
\begin{proof}
 At each point $p \in D$ pass over to a local cover and consider the domains $\Omega = B_r(p) \times [3r^2,4r^2] \subset B_{2r}(p) \times [0,4r^2] = \Omega'$ for $0 < r < \frac12 T^{1/2}$.
 Proposition \ref{Prop:Shi} then yields the desired result.
 
 In the case of the linearized flow equation, we can analyze the flow $(\delta h_t)$ for sufficiently small $\delta > 0$.
\end{proof}

\subsection{Short-time existence}
Equation (\ref{eq:dhRS}) implies that the modified Ricci deTurck flow equation (\ref{eq:MRdTflow}) is strongly parabolic if $h_t$ is small enough.
We quote a general short-time existence result which follows by a standard inverse function theorem argument.
For more details see \cite{Shi}, \cite{LS}, \cite[sec 4]{SSS1} and \cite{Bamler-thesis}.

\begin{Proposition}[Short-time existence] \label{Prop:shortex}
 Let $(M,\ov g)$ be an arbitrary Einstein manifold of bounded curvature and with Einstein constant $\lambda = -n+1$ and let $m_0 \in \IN$.
 Then there are $\varepsilon_{s.e.}, \tau_{s.e.} > 0$, $C_{s.e., m} < \infty$ such that the following holds: \\
Let $g_0$ be another smooth metric on $M$ such that
\[ \Vert g_0 - \ov g \Vert_{L^{\infty}(M)} < \varepsilon_{s.e.}, \]
then there is a smooth solution $(g_t) \in C^{\infty}(M \times [0,\tau_{s.e.}])$ to the modified Ricci deTurck flow equation (\ref{eq:MRdTflow}) with initial metric $g_0$.
Moreover, we have the bound
\[ 
\Vert g_t - \ov g \Vert_{L^\infty (M \times [0,\tau_{s.e.}])} \leq C_{s.e.,0}  \Vert g_0 - \ov g \Vert_{L^\infty(M)}
\]
and for every $m \leq m_0$ we have
\[ 
\Vert g_t - \ov g \Vert_{C^{2m;m}(M \times [\eta \tau_{s.e.},\tau_{s.e.}])} \leq C_{s.e.,m} \eta^{-m} \Vert g_0 - \ov g \Vert_{L^\infty(M)}  \quad \text{for all $\eta \in (0,1]$}.
\]

Moreover, the solution $(g_t)_{t \in [0,\tau_{s.e.}]}$ is unique amongst all solutions $(g'_t)_{t \in [0,\tau']}$ for which $\Vert g'_t - \ov g \Vert_{L^\infty(M \times [0,\tau'])} < C_{s.e., 0} \varepsilon_{s.e.}$.
\end{Proposition}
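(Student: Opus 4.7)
The plan is to set this up as a fixed-point problem for the nonlinear map associated with the Duhamel formula. Writing $g_t = \ov g + h_t$, equation (\ref{eq:dhRS}) takes the form
\[ \partial_t h_t + L h_t = R[h_t] + \nabla^* S[h_t], \]
and the linearization $\partial_t h + L h = 0$ is strongly parabolic because (by the Weitzenb\"ock computation in subsection \ref{subsec:Einstop}) $L = \nabla^*\nabla + (\text{zeroth order})$ on any Einstein background. The first step is therefore to construct, on the noncompact bounded-curvature Einstein manifold $(M,\ov g)$, the linear heat semigroup $e^{-tL}$ acting on sections of $\Sym_2 T^*M$. Since the curvature of $\ov g$ and all its derivatives are bounded, one can build $e^{-tL}$ by a Shi-type parametrix construction (exhausting $M$ by compact domains, solving Dirichlet problems, and passing to the limit using Corollary \ref{Cor:Shi} applied to the linear equation to get uniform smoothing bounds $\Vert \nabla^m e^{-tL} h_0\Vert_{L^\infty} \leq C_m t^{-m/2}\Vert h_0\Vert_{L^\infty}$).

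Next I would set up the Banach space
\[ \mathcal{X}_\tau = \Bigl\{ h \in C^0(M \times [0,\tau]) : \Vert h \Vert_{\mathcal{X}_\tau} := \sup_t \Vert h_t\Vert_{L^\infty} + \sup_t t^{1/2}\Vert \nabla h_t\Vert_{L^\infty} < \infty \Bigr\} \]
and consider the map
\[ \Phi(h)_t = e^{-tL} h_0 + \int_0^t e^{-(t-s)L}\bigl(R[h_s] + \nabla^* S[h_s]\bigr)\, ds. \]
Using the bounds (\ref{eq:boundRS}), $|R[h]|\leq C|\nabla h|^2$ and $|S[h]|\leq C|h||\nabla h|$ whenever $|h|<0.1$, together with the smoothing estimates for the semigroup (and one extra spatial derivative absorbed into $e^{-(t-s)L}$, yielding a factor $(t-s)^{-1/2}$), the standard computation shows that on a short interval $[0,\tau_{s.e.}]$ the map $\Phi$ sends a small ball in $\mathcal{X}_{\tau_{s.e.}}$ to itself and is a contraction there, provided $\Vert h_0\Vert_{L^\infty}<\varepsilon_{s.e.}$ with $\varepsilon_{s.e.}, \tau_{s.e.}$ depending only on the curvature bounds. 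The unique fixed point $h \in \mathcal{X}_{\tau_{s.e.}}$ is the desired solution, and the contraction estimate immediately gives the $L^\infty$ bound
\[ \Vert h_t\Vert_{L^\infty(M\times[0,\tau_{s.e.}])} \leq C_{s.e.,0} \Vert h_0\Vert_{L^\infty(M)}. \]

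For the higher derivative estimates with weights $\eta^{-m}$, I would invoke Corollary \ref{Cor:Shi} directly: once $h$ has been produced with $\Vert h\Vert_{L^\infty} < \varepsilon_m$ (shrinking $\varepsilon_{s.e.}$ if necessary, depending on $m_0$), the corollary applied on the parabolic neighborhood $M \times [\eta \tau_{s.e.}/2, \tau_{s.e.}]$ yields $\Vert \nabla^m h_t\Vert_{L^\infty} \leq C_m (\eta\tau_{s.e.})^{-m/2}\Vert h\Vert_{L^\infty}$, and the mixed derivatives in time follow from the equation. Combined with the previous bound this gives the claimed $C^{2m;m}$ estimate. Uniqueness in the class $\Vert g'_t-\ov g\Vert_{L^\infty} < C_{s.e.,0}\varepsilon_{s.e.}$ follows from the contraction argument applied to the difference of two solutions on possibly a smaller time interval and iterating.

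The main obstacle is the noncompactness of $M$: all estimates must be global and independent of the injectivity radius. This is resolved by the fact that Corollary \ref{Cor:Shi} (and the underlying Proposition \ref{Prop:Shi}) is purely local in a parabolic neighborhood of prescribed size, with constants depending only on curvature bounds of $\ov g$; no injectivity radius enters. The only genuinely global input is the existence of the semigroup $e^{-tL}$ on $L^\infty$, which is itself a consequence of the same local Shi estimates applied along an exhaustion, so the argument closes consistently in the noncompact setting.
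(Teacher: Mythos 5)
The paper does not prove this proposition in detail --- it quotes it as a standard result (``inverse function theorem argument'') with references to Shi, Ladyzhenskaya--Solonnikov--Ural'ceva, \cite{SSS1} and the author's thesis --- so your Duhamel/fixed-point sketch is in the right family of arguments. However, as written it has a genuine analytic gap: the Banach space $\mathcal{X}_\tau$ with norm $\sup_t \Vert h_t \Vert_{L^\infty} + \sup_t t^{1/2} \Vert \nabla h_t \Vert_{L^\infty}$ does not close the iteration for equation (\ref{eq:dhRS}). Two estimates fail. First, for the non-divergence quadratic term, (\ref{eq:boundRS}) only gives $|R[h_s]| \leq C |\nabla h_s|^2 \leq C s^{-1} \Vert h \Vert_{\mathcal{X}_\tau}^2$, so the $L^\infty$ bound of the Duhamel integral produces $\int_0^t s^{-1}\, ds$, which diverges logarithmically; rewriting $|\nabla h|^2$ in divergence form only trades this for an $h \otimes \nabla^2 h$ term whose Duhamel estimate $\int_0^t (t-s)^{-1/2} s^{-1}\, ds$ diverges in the same way. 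Second, for the gradient component of the norm applied to the $\nabla^* S$ term you need the operator $\nabla e^{-(t-s)L} \nabla^*$, whose $L^\infty \to L^\infty$ norm is of order $(t-s)^{-1}$, not integrable at $s = t$. You cannot rescue this by using smoothness of $g_0$, because the constants $\varepsilon_{s.e.}, C_{s.e.,m}$ must be independent of any derivative bounds on $g_0$ (this independence is exactly what is used later in the proof of Theorem \ref{Thm:main}), and in a contraction argument the whole ball, not just the solution, must be handled.

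This borderline behaviour of $L^\infty$-small data against quadratic gradient nonlinearities is precisely why Koch and Lamm introduce spatially averaged space-time norms (Carleson-type $L^2$ bounds on $\nabla h$ over parabolic cylinders, plus Calder\'on--Zygmund estimates for second-derivative kernels as in Lemma \ref{Lem:CZ}); note that $\sup_x$-in-space, $L^2$-in-time control of $\nabla e^{-tL} h_0$ by $\Vert h_0 \Vert_{L^\infty}$ is itself false, so the averaging in space is essential. The paper deploys exactly this machinery in section \ref{sec:invRF} (the norms $\beta, \gamma$ of subsection \ref{subsec:Intronorms}), and any fixed-point proof of Proposition \ref{Prop:shortex} must either incorporate such norms into the iteration space, or take the alternative route of the cited references: approximate/mollify the data, solve for the approximations, use the interior a priori estimates of Proposition \ref{Prop:Shi} (which are independent of the injectivity radius) to get uniform $L^\infty$ and scaled derivative bounds on a uniform time interval, and pass to a limit, with uniqueness again via the interior estimates. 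Your treatment of the higher-derivative bounds and of uniqueness via Corollary \ref{Cor:Shi} is fine once existence with the uniform $L^\infty$ bound is in place; the missing piece is the existence scheme itself.
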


\section{Outline of the proof} \label{sec:Overview}
We give a brief sketch of the proof.
The linearization of the modified Ricci deTurck equation (\ref{eq:MRdTflow}) or (\ref{eq:dhRS}) in terms of the perturbation $h_t = g_t - \ov g$ reads $(\partial_t + L) h_t = 0$.
By (\ref{eq:Lgeqnm2}), the Einstein operator $L$ is strictly positive and hence the linearized flow  is indeed strongly attractive in the $L^2$-sense (i.e. the $L^2$-norm of every solution decays exponentially for $t \to \infty$).
In the case in which $M$ has no cusps and hence the injectivity radius is uniformly bounded from below, it easily follows that $L$ is actually strongly attractive in the $L^\infty$-sense.
It is then possible to show that also the nonlinear flow equation is strongly attractive.

However, in the case of manifolds with cusps, we lose the $L^\infty$-attractiveness.
The reason for this is the following:
Look at a very long part of a cusp which which is very far from the compact part of $M$ and consider a perturbation $h$ which is supported in this region and $\Tor^{n-1}$-invariant (see the end of subsection \ref{subsec:Einstop}).
It is possible to choose $h_{ij}$ such that its derivatives in the $s$-direction are very small, but such that at some point, say the $23$ entry attains a value bigger than $\frac1{1000}$. 
Then by looking at (\ref{eq:Linv1})-(\ref{eq:Linv3}), we expect $h_{23}$ to decay very slowly in time.
The geometric reason behind this weak stability is that hyperbolic cusps admit so-called trivial Einstein deformations (see \cite{Bam}), i.e. certain metric deformations which still satisfy the Einstein equation and which correspond to deformations of the flat structure on $\Tor^{n-1}$.
In our case, $h$ approximates such a trivial Einstein deformation.

Hence, the most important part of the proof is to show that despite this slow decay, we still have longtime bounds for solutions of of the nonlinear equation (\ref{eq:dhRS}) on a hyperbolic cusp.
This discussion is started in section \ref{sec:cusp}, where the solution is split into two components: one which is $\Tor^{n-1}$-invariant, i.e. constant along the cross-sectional tori (and hence its linearization is described by (\ref{eq:Linv1})-(\ref{eq:Linv3})) and one which averages out to $0$ over the cross-sections.
The flow equation can be expressed as a coupled system of flow equations in those two components.
It will then be shown that the linearization of the equation describing the second component is strongly attractive.

So it remains to analyze solutions to the equation describing the first component.
This equation is equivalent to (\ref{eq:MRdTflow}) or (\ref{eq:dhRS}) for $h_t$ being $\Tor^{n-1}$-invariant, but with an extra input term.
It can be reduced to a system of nonlinear parabolic equations in two variables $s$ and $t$ only.
We will discuss it in section \ref{sec:invRF}.
Here it becomes important to analyze the nonlinear term of the flow equation very carefully and our modification of the Ricci deTurck flow will turn out to be essential (we will point out when the modification becomes important on page \pageref{page:MRdF}).
Once we have described the algebraic structure of the nonlinear term, we apply analytical tools which were developed by Koch and Lamm in \cite{KL} and which we need to adapt to our situation.
Note that this part is actually the heart of the proof.

Finally in section \ref{sec:whole}, we establish the stability of the whole manifold $M$.
Since we have a uniform lower bound on the injectivity radius on the thick part of $M$, we can use the same arguments as in the no-cusp case there.
We then have to incorporate the longtime bounds on the cusps.
Here we have to carefully choose the border between the thick and the thin part of $M$ depending on the time.
We note that in this section we give a detailed description of how the convergence takes place.

In the following $C$, will always denote a dynamic constant which only depends on the quantities which are indicated in the beginning of each section.
For simplicity, we assume that $C > 1$.
Moreover, $\sigma > 0$ will denote a constant which we will have to choose sufficiently small.
It will always be clear that we can fix $\sigma$ first and then choose $C$ depending on it.

\section{Modified Ricci deTurck flow on a cusp} \label{sec:cusp}
\subsection{Introduction}
In this part we consider the following setting:
Let $\Tor = \Tor^{n-1}$ be a flat torus, $\Tor / \Gamma$ a finite quotient and consider the corresponding hyperbolic cusp $(N = [0,\infty) \times (\Tor / \Gamma), \ov g)$.
If the cusp is standard (i.e. $\Gamma =  \{ 1 \}$) we can choose coordinates $(s,x_2, \ldots, x_n)$ such that (see subsection \ref{subsec:Hypmfs})
\[ \ov{g} = d s^2 + e^{-2s} (dx_2^2 + \ldots + dx_n^2). \]
Note that $N$ is contained in the complete hyperbolic cusp $N' = (\IR \times (\Tor / \Gamma), \ov g)$.
Denote by $B_{\sigma}(\partial N) = [0, \sigma) \times (\Tor / \Gamma)$ the tubular neighborhood of radius $\sigma$ around $\partial N$ in $N$.

In this section we will prove
\begin{Theorem} \label{Thm:RdToncusp}
Let $T > (10\sigma)^2$ be some maximal time and consider a solution $(g_t)_{t \in [0,T)}$ to the modified Ricci deTurck flow equation (\ref{eq:MRdTflow}) on $(N,\ov g)$.
Assume moreover, that $g_t$, $\nabla g_t$, $\nabla^2 g_t$ and $\nabla^3 g_t$ are uniformly bounded on $N \times [0,T)$ by some constant. \\
Then for any $\delta > 0$ there are constants $\varepsilon_{cusp} > 0$ and $C_{cusp} < \infty$, both depending only and continuously on the geometry of $\Tor / \Gamma$ and on $\delta$, such that if
\[ H = \sup_{\stackrel{ \scriptstyle B_{10\sigma}( \partial N ) \times [0,T) }{\cup N \times [0, (10\sigma)^2) } } e^{\delta t} \big( | g_t - \ov g | + |\nabla g_t| \big) < \varepsilon_{cusp}, \]
then $\Vert g_t - \ov g \Vert_{L^\infty(N \times [0,T))} < C_{cusp} H$.
\end{Theorem}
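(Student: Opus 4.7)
The proof will proceed by splitting the perturbation $h_t := g_t - \ov g$ into a fibrewise-average part $h_t^{\mathrm{inv}}$ (averaging over each cross-sectional torus $\Tor^{n-1}/\Gamma$ at fixed $s$) and a complementary part $h_t^{\perp} := h_t - h_t^{\mathrm{inv}}$ whose cross-sectional mean vanishes. Rewriting \eqref{eq:dhRS} in this splitting yields a coupled nonlinear system for $(h^{\mathrm{inv}}, h^{\perp})$ whose linear parts decouple, and the two components must be handled by separate arguments.

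For $h_t^{\perp}$: the restriction of the Einstein operator $L$ to the cross-sectionally mean-zero subspace enjoys a positive spectral gap that is uniform in $s$. Indeed, after a Fourier expansion along the cross-sectional torus, the non-constant modes all have Laplacian bounded below by a positive constant depending only on the geometry of $\Tor^{n-1}/\Gamma$. Combined with the Weitzenböck identity \eqref{eq:Lgeqnm2} and the interior parabolic regularity of Corollary \ref{Cor:Shi}, this gives $L^\infty$-decay of $h_t^{\perp}$ at some rate $\delta' > 0$, which (after replacing $\delta$ by $\min\{\delta,\delta'\}$ if necessary) dominates the weight $e^{-\delta t}$. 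The boundary values on $B_{10\sigma}(\partial N)$ and the short-time initial data feed into the estimate through the hypothesis on $H$, while the nonlinear coupling to $h^{\mathrm{inv}}$ is quadratic by \eqref{eq:boundRS} and is absorbed by a bootstrap.

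For $h_t^{\mathrm{inv}}$: this piece depends only on $(s,t)$ and its linearization is the $1{+}1$-dimensional system \eqref{eq:Linv1}--\eqref{eq:Linv3}. The fundamental difficulty is visible already here: the off-diagonal components $h_{ij}$ with $2 \leq i \neq j \leq n$ (and a combination of the diagonal components) carry no zeroth-order restoring term, because they approximate trivial Einstein deformations of the flat cross-section in the sense of \cite{Bam}. Consequently these modes are not attractive in $L^\infty$ and the traditional energy approach breaks down. This is precisely why the modified Ricci deTurck flow was introduced in subsection \ref{subsec:MRdT}: its lower-order correction produces an algebraic structure of the nonlinear term for which the weighted-$L^p$ Carleson-measure technique of Koch and Lamm can be adapted, yielding a bound $\|h_t^{\mathrm{inv}}\|_{L^\infty} \leq C H$ with no exponential decay but with the desired smallness. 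This is the hard part of the argument, to be carried out in section \ref{sec:invRF}.

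Finally, the two estimates are combined through a continuity argument. Assuming the weak bootstrap $\|h_t\|_{L^\infty(N \times [0,T))} < 2 C_{cusp} H$, the analyses above yield the strong bound $\|h^{\mathrm{inv}}_t\|_{L^\infty} + \|h^{\perp}_t\|_{L^\infty} < C_{cusp} H$, provided $\varepsilon_{cusp}$ is chosen small enough that the quadratic nonlinearities bounded via \eqref{eq:boundRS} can be absorbed. The tubular collar $B_{10\sigma}(\partial N)$ serves to supply controlled boundary data for the interior analysis (one extends through this collar into the two-sided cusp $N'$, on which the Koch-Lamm technology naturally lives). The main obstacle throughout is the invariant component: the absence of any linear decay for the trivial-Einstein-deformation modes forces the entire delicate machinery of section \ref{sec:invRF}, and is the reason the conclusion provides only a linear sup-norm bound in $H$ rather than pointwise decay.
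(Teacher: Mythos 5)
Your high-level strategy coincides with the paper's: split $h_t$ into the torus-averaged (invariant) part and the mean-zero (oscillatory) part, defer the invariant part to the Koch--Lamm analysis of section \ref{sec:invRF}, show decay of the oscillatory part, and close with a continuity argument. However, as written there is a genuine gap in how you glue the two components. The invariant-component result you plan to invoke (Theorem \ref{Thm:cuspinv}, equivalently Proposition \ref{Prop:cuspinv2}) does not merely ask that the coupling terms $I^{inv}=R^{inv}[h^{inv}+h^{osc}]-R[h^{inv}]$ and $J^{inv}=S^{inv}[h^{inv}+h^{osc}]-S[h^{inv}]$ be quadratically small with some temporal decay; it requires them to be small with respect to the weight $e^{s(x)+\delta t}$, i.e.\ to decay \emph{exponentially in the cusp direction} as well as in time. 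By \eqref{eq:Rinvest1}, \eqref{eq:Rinvest2} this forces you to prove bounds of the form $|h^{osc}|,\;|\nabla h^{osc}|\lesssim e^{-s-\delta t}(\eta+H)$. Your Fourier/spectral-gap argument only delivers temporal decay of $h^{\perp}$ in $L^\infty$; nothing in your sketch produces the spatial factor $e^{-s}$, and ``absorbed by a bootstrap'' does not substitute for it, because the invariant equation has no decay mechanism of its own and the hypothesis of Theorem \ref{Thm:cuspinv} is simply not met without that weight. The paper obtains the spatial factor from the elementary but essential observation \eqref{eq:trick} that $\diam(\{s\}\times\Tor)\lesssim e^{-s}$, so the oscillatory part is controlled by derivatives times $e^{-ms}$ (applied both to the solution and, in Lemma \ref{Lem:oschk}(b), to the heat kernel, which is also needed to beat the volume factor $e^{\frac12(n-1)s}$ that appears when converting $L^2$ decay from \eqref{eq:Lgeqnm2} into pointwise decay on a cusp of collapsing injectivity radius -- a step your ``Weitzenb\"ock plus interior regularity'' sentence glosses over).

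A second, related omission: you need the gradient bound $|\nabla h^{osc}|\lesssim e^{-s-\delta t}(\cdot)$, not just the bound on $h^{osc}$ itself, since $\nabla h^{osc}$ enters $I^{inv}$ and $J^{inv}$ linearly. Interior parabolic estimates (Corollary \ref{Cor:Shi}) bound $\nabla h$ by $\Vert h\Vert_{L^\infty}$, not $\nabla h^{osc}$ by $\Vert h^{osc}\Vert_{L^\infty}$; the paper needs a separate device for this (comparing $h_t$ with its pullbacks under torus translations, applying Proposition \ref{Prop:Shi} to the rescaled difference, and averaging -- see the derivation of \eqref{eq:etaShiestimate}). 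Finally, your closing continuity argument tracks only the single quantity $\Vert h_t\Vert_{L^\infty}$; to close, one must run the bootstrap simultaneously in two quantities, the sup norm $\theta$ and the weighted oscillatory norm $\eta=\sup e^{s+\delta t}|h^{osc}|$, since the estimate for $\theta$ is linear in $\eta$ and the estimate for $\eta$ is quadratic in $(\theta,\eta)$. With the weighted oscillatory estimates supplied, your outline becomes the paper's proof; without them it does not close.
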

Since we can pass to a finite cover, we will assume that $(N, \ov{g})$ is standard.

The idea of the proof is the following:
We split $g_t$ into a sum of two components, namely its invariant component $g^{inv}_t$ which is constant along all cross-sectional tori $\{ s \} \times \Tor$ and its oscillatory component $g^{osc}_t$ having the property that the integral along all such tori vanishes.
We can then express the flow equation (\ref{eq:dhRS}) as a coupled system of equations in $h^{inv}_t = g^{inv}_t - \ov{g}$ and $h^{osc}_t = g^{osc}_t$.

The analysis of the equation for $g^{inv}_t$ is the most crucial part of the proof and is deferred to section \ref{sec:invRF}.
In this section, we will mainly focus on the equation for $h^{osc}_t$.
In subsection \ref{subsec:finalcusp}, it will turn out that this equation describes a strong equilibrium, i.e. solutions are expected to decay to zero rapidly.
Furthermore, the coupling between both equations will be analyzed.

In this section, $C$ will always denote a dynamic constant depending only and continuously on $\delta$ and the geometry of $\Tor$.

\subsection{The invariant and the oscillatory component of the flow} \label{subsec:invosc}
For every tensor-field $h$ on $N$, we define its \emph{invariant component} $h^{inv}$ and its \emph{oscillatory component} $h^{osc}$ by
\[ h^{inv}(s, x_2, \ldots, x_n) = \frac1{\vol \Tor} \int_{\Tor} h(s, x_2, \ldots, x_n) dx_2 \cdots dx_n, \qquad h^{osc} = h- h^{inv}. \]
Note that $h^{inv}$ only depends on $s$, and that $h^{inv}$ and $h^{osc}$ are orthogonal to each other in the $L^2$-sense.
Furthermore, observe that ${\ov g}^{inv} = \ov g$, ${\ov g}^{osc} = 0$.
We can split the flow $(g_t)$ into a sum of the flows $(g^{inv}_t)$ and $(g^{osc}_t)$ and respectively for the perturbation $h_t = g_t - \ov g$, we have the decomposition $h_t = h^{inv}_t + h^{osc}_t$.

Equation (\ref{eq:dhRS}) can be expressed by equations in $h^{inv}_t$ and $h^{osc}_t$:
\begin{subequations}
\begin{alignat}{2}
 \partial_t h^{inv}_t + L h^{inv}_t &= R^{inv}[h^{inv}_t &&+ h^{osc}_t] + \nabla^* S^{inv} [h^{inv}_t + h^{osc}_t] \label{eq:evolhsplita} \\
 \partial_t h^{osc}_t + L h^{osc}_t &= R^{osc}[h^{inv}_t &&+ h^{osc}_t] + \nabla^* S^{osc} [h^{inv}_t + h^{osc}_t]. \label{eq:evolhsplitb}
\end{alignat}
\end{subequations}
Analogously to (\ref{eq:boundRS}), we can derive the following pointwise bounds if we assume $|h|< 0.1$
\begin{subequations}
\begin{alignat}{1}
 |R[h^{inv} + h^{osc}] - R[h^{inv}] | &\leq C ( |h^{osc} | | \nabla h |^2 + | \nabla h^{osc} | | \nabla h | ), \label{eq:Rinvest1} \\
 |S[h^{inv} + h^{osc}] - S[h^{inv}] | &\leq C( |h^{osc}| | \nabla h | + | \nabla h^{osc} |  | h | ). \label{eq:Rinvest2}
\end{alignat}
\end{subequations}

\subsection{The invariant component}
Set $I^{inv}_t = R^{inv}[h^{inv}_t + h^{osc}_t] - R[h^{inv}_t]$ and $J^{inv}_t = S^{inv} [h^{inv}_t + h^{osc}_t] - S[h^{inv}_t]$ and rewrite (\ref{eq:evolhsplita}) as
\begin{equation} \partial_t h^{inv}_t + L h^{inv}_t = R[h^{inv}_t] + \nabla^* S [h^{inv}_t] + I^{inv}_t + \nabla^* J^{inv}_t. \tag{\ref{eq:evolhsplita}$'$} \label{eq:RdTIJ}
\end{equation}

We can view (\ref{eq:RdTIJ}) as a modified Ricci deTurck flow equation with an extra input term $I^{inv}_t + \nabla^* J^{inv}_t$.
Observe that all quantities in this equation are invariant.

The following theorem gives us control over $h^{inv}_t$ in terms of bounds on $h^{inv}_t$ near the parabolic boundary $\partial N \times [0,T) \cup N \times \{ 0 \}$ and certain bounds on $I^{inv}_t$ and $J^{inv}_t$.
We defer its proof to section \ref{sec:invRF}.

\begin{Theorem} \label{Thm:cuspinv}
Let $T > (9\sigma)^2$ be some maximal time and consider an invariant solution $(h^{inv}_t)_{t \in [0,T)}$ to the modified Ricci deTurck flow equation (\ref{eq:RdTIJ}) with an extra ``input term'' $I^{inv}_t + \nabla^* J^{inv}_t$ on $(N, \ov g)$.
Moreover, assume that $h^{inv}_t$ and $\nabla h^{inv}_t$ are uniformly bounded on $N \times [0,T)$ by some constant.\\
Then for every $\delta > 0$ there are constants $\varepsilon_{inv} > 0$ and $C_{inv} < \infty$ depending only on $\delta$ and $n$ such that if
\begin{multline*}
 H = \sup_{\stackrel{\scriptstyle B_{9\sigma}(\partial N) \times [0,T)}{\cup N \times [ 0, (9\sigma)^2 )  }} \big( |h_t^{inv}| + |\nabla h_t^{inv}| \big) 
+ \sup_{(x,t) \in N \times [0,T)} e^{ s(x) + \delta t}  \big( |I_t^{inv}|(x,t)  \\
 + |J_t^{inv}|(x,t) \big) < \varepsilon_{inv},
\end{multline*}
then $\Vert h^{inv}_t \Vert_{L^\infty(N \times [0,T))} \leq C_{inv}H$.
\end{Theorem}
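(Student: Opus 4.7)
My plan is to exploit the $\Tor^{n-1}$-invariance to reduce (\ref{eq:RdTIJ}) to a nonlinear parabolic system for the scalar components $h_{11}, h_{1i}, h_{ij}$ depending only on $(s,t) \in [0,\infty) \times [0,T)$, whose linear part is described by (\ref{eq:Linv1})--(\ref{eq:Linv3}). After an exponential change of variable of the form $u = e^{-(n-1)s/2} h$ the first-order drift $-(n-1)h'$ disappears and $L$ becomes a one-dimensional constant-coefficient Schr\"odinger-type operator on a half-line; the fundamental solution of the associated heat equation is essentially the Gaussian kernel from Lemma \ref{Lem:CZ}, and the $L^p$ boundedness of the second-derivative convolution supplied by that lemma is what will ultimately control the divergence term $\nabla^* S$ in the nonlinearity.

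The three mode types behave very differently under $L$. The $(11)$-mode and the $(1i)$-modes carry strictly positive zeroth-order masses ($2(n-1)$ and $n$ respectively), so after the weight is applied they are strictly contracting, and their contribution to $h^{inv}$ can be bounded directly from the boundary/initial data together with a Duhamel integral of the source $I^{inv}_t + \nabla^* J^{inv}_t$, whose $e^{-s-\delta t}$ decay more than suffices. The difficulty lies entirely with the $(ij)$-modes ($i,j\geq 2$), where (\ref{eq:Linv3}) has only the cross-sectional trace $\sum_k h_{kk}$ as its zeroth-order term: the traceless off-diagonal constant mode $h_{ij}=\text{const}$ is annihilated by $L$ and corresponds to a trivial Einstein deformation of the flat torus fibre. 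The linear semigroup on this sector is merely bounded, not contractive, and a naive maximum-principle argument will not close.

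This is where I would follow Koch--Lamm: introduce a parabolic Carleson-type norm $\Vert\cdot\Vert_X$ on invariant functions over $N\times[0,T)$ tailored so that (i) the linear flow $e^{-tL}$ with prescribed boundary and initial data of size $H$ is bounded on $X$ by $CH$, (ii) the inhomogeneous contribution from a source of size $H e^{-s-\delta t}$ also has $X$-norm at most $CH$, and (iii) the nonlinearity $R[h^{inv}] + \nabla^* S[h^{inv}]$ is quadratic on $X$ with a small constant. A contraction-mapping argument in a ball of $X$-radius $2CH$ then delivers the solution, and the claimed bound $\Vert h^{inv}\Vert_{L^\infty(N\times[0,T))}\leq C_{inv}H$ follows from an appropriate embedding $X\hookrightarrow L^\infty$.

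The main obstacle is going to be step (iii). The Carleson norm only controls $h$ and $\nabla h$ in an averaged sense, so every dangerous term of the form $h\cdot\nabla^2 h$ in the nonlinearity must be hidden inside a divergence $\nabla^*S$, so that Lemma \ref{Lem:CZ} can shift the derivative through a Calder\'on--Zygmund convolution rather than being estimated pointwise. This is precisely the role played by the \emph{modified} Ricci deTurck flow (page \pageref{page:MRdF}): the logarithm in $X_{\ov g}$ has to produce a cancellation of certain leading-order terms when restricted to $\Tor$-invariant tensors so that the residual nonlinearity has the right algebraic shape to be absorbed into $R$ and $\nabla^* S$ with the bounds (\ref{eq:boundRS}). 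Verifying this cancellation carefully, and then bookkeeping how the three mode types feed into one another through the nonlinear coupling (the trace appearing in (\ref{eq:Linv3}) being the most delicate), is where the real work will lie.
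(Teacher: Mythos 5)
Your outline does track the paper's strategy in broad strokes (reduction to an invariant one--dimensional parabolic system, separation of the contracting modes from the merely bounded cross-sectional sector, Koch--Lamm type function spaces, Lemma \ref{Lem:CZ} for the divergence terms, and reliance on the cancellation furnished by the \emph{modified} flow), but two of your concrete steps would fail as stated. First, the conjugation $u=e^{-(n-1)s/2}h$ removes the drift $-(n-1)h'$ only at the price of multiplying the quadratic nonlinearity by $e^{(n-1)s/2}$, so in the unweighted spaces where the conjugated operator has a spectral gap the nonlinear terms acquire exponentially growing coefficients; and no conjugation can make the traceless constant modes $M_{ij}=\mathrm{const}$ decay, since these are genuine (trivial Einstein) deformations. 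The paper keeps $h$ itself and kills the drift by the moving coordinate $x=s-(n-1)t$, which is what produces the slanted domain $D$ and the tailored heat-kernel estimates of Lemma \ref{Lem:hk}; some such device is needed in place of your substitution.

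Second, and more seriously, the ansatz ``one Carleson norm $X$, nonlinearity quadratic with small constant, contraction in a ball of radius $2CH$'' does not match the actual structure, so the scheme cannot close as proposed. Because of the trace term $-2\delta_{ij}\sum_k h_{kk}$ in (\ref{eq:Linv3}), the equation for the non-contracting sector contains a term that is \emph{linear} in the contracting variables (in the paper, $R_u = 1*v + \dots$ after introducing the auxiliary variable $F=\tr\log(E+M)$, cf. (\ref{eq:struc1})), and this term is integrated against the non-decaying kernel $\Phi$. Closing the estimate therefore forces an asymmetric, two-tier norm structure: the contracting component $v=(A,F,V)$ must be measured in strictly stronger norms encoding $1/t$-decay of $v$ and $v'$ (the $L^1_{\mu_1}$-- and $L^2_{\mu_2}$--Carleson quantities $\beta_v,\gamma_v$), estimated first with genuinely superlinear right-hand side, and only then fed linearly into the estimate for $u=M$, whose norms encode merely $t^{-1/2}$-decay of $u'$; this is the content of Lemmas \ref{Lem:alpha}--\ref{Lem:gamma} and of the final system of inequalities in the proof of Proposition \ref{Prop:cuspinv2}. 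Finally, note that the theorem is an a priori bound for a \emph{given} solution controlled only near the parabolic boundary; the paper obtains it from a Duhamel representation plus a continuity argument in the final time, whereas your fixed-point formulation would additionally require a uniqueness statement strong enough to identify the contraction's fixed point with the given solution, which is only assumed bounded by some constant, not small.
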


\subsection{The heat kernel on the cusp}
Recall that $N \subset N'$, where $N'$ is the complete hyperbolic cusp.
Let $E = \Sym_2 T^*N'$ be the vector bundle of symmetric $2$-forms over $N'$.
The Einstein operator $L$ is a second order differential operator acting on sections of $E$.
Let $(k_t) \in C^{\infty}(N' \times N' \times \IR_+; E \boxtimes E^*)$ be the heat kernel associated to $L$ on $N'$, i.e. for all $y \in N'$
\[
(\partial_t + L)k_t(\cdot, y) = 0 \qquad \text{and} \qquad
k_t(\cdot, y) \xrightarrow[t \to 0]{} \id_{E_y} \delta_y.
\]
We denote the derivatives of $k_t$ with respect to the first variable by $\nabla_1 k_t$ and those with respect to the second by $\nabla_2 k_t$.
Observe that we have the following symmetry property:
\[ \nabla_1^{m_1} \nabla_2^{m_2} k_t(x,y) = \nabla_2^{m_1} \nabla_1^{m_2} k_t^*(y,x). \]
Moreover, the convolution property holds:
\[ \nabla_1^{m_1} \nabla_2^{m_2} k_{t_1+t_2}(x,y) = \int_{N'} \nabla_1^{m_1} k_{t_1} (x,z) \nabla_2^{m_2} k_{t_2}(z,y) dz. \]

The torus $\Tor$, viewed as a Lie group, acts isometrically on $N'$ and $E$ by multiplication on the $\Tor$-factor.
So the heat kernel $k_t$ is equivariant with respect to this action, i.e. $g_*^{-1} \circ k_t(g.x,g.y) \circ g_* = k_t(x,y)$ for all $g \in \Tor$.
Hence the oscillatory part of $k_t(x,y)$ with respect to $x$ is the same as with respect to $y$ and we can write $k^{osc}_t(x,y)$ without ambiguity.

The following bounds for $k_t$ and $k_t^{osc}$ hold:

\begin{Lemma} \label{Lem:oschk}
\begin{enumerate}[(a)]
 \item We have for $x \in N$ and $s = s(x) \geq 0$
\[ 
\Vert k_t(x, \cdot) \Vert_{L^1([s-\sigma,s+\sigma] \times \Tor \times [0,\sigma^2])},  \;\;
\Vert \nabla_2 k_t(x, \cdot) \Vert_{L^1([s-\sigma,s+\sigma] \times \Tor \times [0,\sigma^2])} < C. 
\]
 \item Let $x, y \in N$. If $t \geq \sigma^2$ or $| s(x) - s(y) | \geq \sigma$, then
\[ |k_t^{osc}|(x,y), \; |\nabla_{2} k_t^{osc}| (x,y) < C \exp( -s(x) - s(y) - (n-2)t ). \]
\end{enumerate}
\end{Lemma}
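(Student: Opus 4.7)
For part (a), the complete cusp $(N', \ov g)$ has uniformly bounded geometry, so Aronson-type Gaussian upper bounds apply to the heat kernel of the uniformly elliptic operator $L$: for $t \in (0, \sigma^2]$,
\[
|k_t(x,y)| + t^{1/2} |\nabla_2 k_t(x,y)| \leq C_1 t^{-n/2} \exp(-c\, d(x,y)^2 / t).
\]
Integrating these bounds against the Riemannian volume on $[s-\sigma, s+\sigma] \times \Tor$ and then in $t$ over $[0, \sigma^2]$ yields the $L^1$ estimates in (a), with constants depending only on $\sigma$ and the isometry class of $\Tor$ (the spatial Gaussian integrates to $O(t^{n/2})$, leaving an integrable $t$-integrand even for the derivative bound).

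For part (b), the key tool is the isometric $\Tor$-action on $(N', \ov g, E)$, under which $k_t$ is equivariant. Writing $\Tor = \IR^{n-1}/\Lambda$, I decompose
\[
k_t(x,y) = \sum_{\xi \in \Lambda^*} k_t^\xi(s(x), s(y)) \otimes \chi_\xi(x_\Tor - y_\Tor),
\]
where the $\chi_\xi$ are the unitary characters (together with the induced action on fibers of $E$) and $k_t^\xi$ is the heat kernel on $\IR$ of a one-dimensional matrix-valued operator $L_\xi$. A direct computation on the warped product $ds^2 + e^{-2s} g_\Tor$ shows that for $\xi\neq 0$ the operator $L_\xi$ carries a coercive mass term $|\xi|^2 e^{2s} \cdot \id_E$, produced by applying $-\triangle$ to the mode $\chi_\xi$ together with the inverse-metric factor $e^{2s}$; in particular $|\xi|\geq\lambda_0>0$ for $\xi\neq 0$, so this mass term is uniformly positive on the range of $s\geq 0$.

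To extract the bound, I apply a weighted parabolic maximum principle to
\[
\Phi_t^\xi(s_1, s_2) = e^{s_1 + s_2 + (n-2)t} k_t^\xi(s_1, s_2).
\]
Under this conjugation, the factor $e^{(n-2)t}$ absorbs the spectral shift $L\geq n-2$ from (\ref{eq:Lgeqnm2}), the weight $e^{s_1+s_2}$ is handled by the $(n-1)\partial_s$ drift in $L_\xi$ visible in (\ref{eq:Linv1})--(\ref{eq:Linv3}), and the residual zeroth-order coefficient of the transformed operator is nonnegative, controlled from below by $|\xi|^2 e^{2s}$. Under the hypothesis $t\geq\sigma^2$, an initial bound on $\Phi^\xi$ at some $t_0\in(0,\sigma^2)$ comes from the Gaussian estimate of part (a) and is propagated to time $t$ by the maximum principle; under $|s(x)-s(y)|\geq\sigma$, the off-diagonal Gaussian already provides the required initial bound at an arbitrarily small $t_0$. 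In both cases I obtain $|\Phi_t^\xi(s_1, s_2)| \leq C\langle\xi\rangle^{-N}$ for any fixed $N$, and summing the absolutely convergent series over $\xi\neq 0$ produces the asserted estimate for $k_t^{osc}$. The bound on $\nabla_2 k_t^{osc}$ follows by repeating the argument after one parabolic differentiation, at the cost of a factor $t^{-1/2}$ that is absorbed by the hypothesis $t\geq\sigma^2$ or by strengthening the Gaussian bound in the off-diagonal regime.

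The main technical obstacle is tracking the tensorial structure of $L$ through the Fourier decomposition and the exponential conjugation. Since $L$ mixes the components of $h\in\Sym_2 T^*N'$, the operator $L_\xi$ is matrix-valued, and maintaining nonnegativity of the conjugated zeroth-order term requires using the Weitzenb\"ock decomposition $L = \DIV_{\ov g}^*\DIV_{\ov g} + d^*d + (\tr_{\ov g}h)\ov g + (n-2)\id$ together with the specific form of the cross-fiber mixing on the cusp.
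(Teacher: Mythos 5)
Your part (a) rests on a pointwise bound that is false on the cusp: the uniform Aronson estimate $|k_t(x,y)|\leq C t^{-n/2}\exp(-c\,d^2(x,y)/t)$ requires non-collapsed geometry, whereas on $N'$ the injectivity radius decays like $e^{-s}$, so once $\sqrt{t}\geq e^{-s(x)}$ the on-diagonal kernel behaves like $\big(\vol B_{\sqrt t}(x)\big)^{-1}\sim e^{(n-1)s(x)}t^{-1/2}$, which is much larger than $t^{-n/2}$ uniformly in $s$. The correct substitute is the Li--Yau bound with the volume factors $\big(\vol B_{\sqrt t}(x)\,\vol B_{\sqrt t}(y)\big)^{-1/2}$, and the $L^1$ statements then require the bookkeeping the paper does: the bound for $k_t$ itself comes from Kato's inequality, which makes $e^{-2t}\Vert k_t(\cdot,y)\Vert_{L^1}$ nonincreasing, and the bound for $\nabla_2 k_t$ comes from the volume-weighted Gaussian together with a case distinction $t^{1/2}\leq e^{-s}$ versus $t^{1/2}\geq e^{-s}$. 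So the conclusion of (a) is true, but not for the reason you give.

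In part (b) the decisive spatial factor $e^{-s(x)-s(y)}$ is never actually produced. First, the positivity of the zeroth-order term after conjugating by $e^{s_1+s_2+(n-2)t}$ is asserted, not checked: conjugating $-\partial_s^2+(n-1)\partial_s+c$ by $e^{s}$ shifts the zeroth-order coefficient by roughly $-(2n-2)$, and by (\ref{eq:Linv1})--(\ref{eq:Linv3}) the trace-free $M$-modes have $c=0$ and the $V$-modes $c=n$, so you must lean entirely on the mass term $|\xi|^2e^{2s}$; but on the complete cusp $N'$ the coordinate $s$ runs over all of $\IR$, where this term degenerates, and even at $s$ of order $\sigma$ it is only bounded below by the squared length of the shortest nonzero dual lattice vector, which can be arbitrarily small compared to $2n-2$, so the maximum principle with the fixed exponent $(n-2)t$ does not close as stated. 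Second, the ``initial bound on $\Phi^\xi$ at $t_0$'' cannot come from (a): on the diagonal at small time, $e^{s_1+s_2}|k_{t_0}|\sim e^{(n+1)s}t_0^{-1/2}$, which is unbounded in $s$, so the $\langle\xi\rangle^{-N}$ bounds you sum must instead come from trading derivatives along the collapsed tori, each worth a factor $e^{-s}$ since $\diam(\{s\}\times\Tor)\leq Ce^{-s}$ --- and this is exactly the estimate (\ref{eq:trick}) that carries the whole lemma, which you never establish. The paper's route is different and shorter: it obtains the time decay $e^{-(n-2)t}$ from the $L^2$ spectral estimate (\ref{eq:Lgeqnm2}) combined with the semigroup property, giving $|k_t|(x,y)\leq C\exp\big(\tfrac12(n-1)(s(x)+s(y))-(n-2)t\big)$, upgrades this to all mixed derivatives by Corollary \ref{Cor:Shi}, and then converts the growing prefactor into the decay $e^{-s(x)-s(y)}$ by applying (\ref{eq:trick}) with $m=n+1$; the remaining case $t<\sigma^2$, $|s(x)-s(y)|\geq\sigma$ is handled by the off-diagonal Gaussian factor $\exp(-\sigma^2/8t)$. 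If you want to salvage the Fourier-mode picture, you would have to prove the $e^{-Ns}$-weighted decay of the nonzero modes (which is (\ref{eq:trick}) in disguise) and replace the pointwise maximum principle by the $L^2$ energy argument to get the $-(n-2)t$ rate.
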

\begin{proof}
By Kato's inequality and (\ref{eq:Lonhyp})
\begin{equation} (\partial_t - \triangle - 2 ) |k_t| (\cdot, y) \leq 0. \label{eq:Kato} \end{equation}
Hence $|k_t|(x,y) \leq e^{2 t} \Phi_t (x,y)$, where $\Phi_t$ is the \emph{scalar} heat kernel on $N'$.
By the heat kernel estimate from \cite[Corollary 3.1]{LiYau}, we can derive a bound on $\Phi_t$ which implies for $t < 1$
\begin{equation} |k_t|(x,y) < C \big( {\vol B_{\sqrt{t}} (x)} \big)^{-1/2} \big( {\vol B_{\sqrt{t}} (y)} \big)^{-1/2}  \exp \Big( { - \frac{d^2(x,y)}{5t} } \Big). \label{eq:CLY} \end{equation}
Using the a priori derivative estimate from Corollary \ref{Cor:Shi}, we obtain
\begin{equation*} 
|\nabla_1^{m_2} k_t |(x,y) < C_{m_2} t^{-m_2/2} \big( {\vol B_{\sqrt{t}} (x)} \big)^{-1/2} \big( {\vol B_{\sqrt{t}} (y)} \big)^{-1/2}  \exp \Big({ - \frac{d^2(x,y)}{6t} }\Big). 
\end{equation*}
By the symmetry property, the same bounds hold for $|\nabla_2^{m_2} k_t|(x,y)$ and since this expression satisfies the linear equation $(\partial_t + L) \nabla_2^{m_2} k_t(\cdot, y) = 0$, we can apply Corollary \ref{Cor:Shi} again to obtain
\begin{equation} |\nabla_1^{m_1} \nabla_2^{m_2} k_t |(x,y) < C_{m_1,m_2} t^{-(m_1+m_2)/2} \big( {\vol B_{\sqrt{t}} (x)} \big)^{-1/2} \ldots \exp \Big({ - \frac{d^2(x,y)}{8t} } \Big) \label{eq:CLYderder} \end{equation}

Observe that (\ref{eq:Kato}) implies that $e^{-2t} \Vert k_t(\cdot, y) \Vert_{L^1(N')}$ is montonically nonincreasing in $t$.
Moreover its limit as $t \to 0$ is equal to $1$.
Hence the quantity is uniformly bounded by $1$ and by the symmetry property, we get $\Vert k_t(x, \cdot) \Vert_{L^1(N')} \leq C$ for $t < 1$.
This establishes the bound on the first quantity of part (a).

For the bound on $\Vert \nabla_2 k_t(x, \cdot) \Vert_{L^1(\ldots)}$, we have to use (\ref{eq:CLYderder}).
Observe here that for $y \in [s-\sigma,s+\sigma] \times \Tor$ and $t \leq \sigma^2$, we have
\[  \vol B_{\sqrt{t}}(x), \; \vol B_{\sqrt{t}}(y) \geq c \min \{ t^{n/2}, \exp(-(n-1)s) t^{1/2} \}. \]
So if $t^{1/2} \leq \exp(-s)$, we find
\[ \Vert \nabla_2 k_t(x,\cdot) \Vert_{L^1([s-\sigma,s+\sigma] \times \Tor)} \leq C t^{-(n+1)/2} \int_{[s-\sigma,s+\sigma] \times \Tor} \exp \Big({ - \frac{d^2(x,y)}{8t}} \Big) dy \leq C t^{-1/2}. \]
And for $t^{1/2} \geq \exp(-s)$
\begin{multline*} \Vert \nabla_2 k_t(x,\cdot) \Vert_{L^1(\ldots)} \leq C t^{-1} \int_{[s-\sigma,s+\sigma] \times \Tor} \exp \Big({- \frac{(s(x)-s(y))^2}{8t} }\Big) \exp( (n-1) s) dy \\
\leq C t^{-1} \int_{s-\sigma}^{s+\sigma} \exp \Big({-\frac{(s(x)-s')^2}{8t}}\Big) ds' \leq C t^{-1/2}.
\end{multline*}
This establishes the bound on the second quantity of part (a).

Integrating (\ref{eq:CLY}) over $N'$ for $t = \sigma^2/10$ and using
\begin{equation} \label{eq:volest} 
\vol B_{\sqrt{t}} (z) \geq c \min \{ t^{n/2}, \exp (-(n-1)s(z)) t^{1/2} \} 
\end{equation}
gives us furthermore for $y \in N$
\[ \Vert k_{\sigma^2/10}(\cdot, y) \Vert_{L^2(N')} < C \exp \bigl( \sfrac12 (n-1) s(y) \bigr). \]
By (\ref{eq:Lgeqnm2})
\[ \partial_t \Vert k_t(\cdot, y) \Vert_{L^2(N')}^2 = - 2 \langle L k_t(\cdot,y), k_t(\cdot,y) \rangle
\leq -2(n-2) \Vert k_t(\cdot, y) \Vert_{L^2(N')}^2. \]
So for $t \geq \sigma^2/10$, we have
\[ \Vert k_t(\cdot, y) \Vert_{L^2(N')} = \Vert k_t(y, \cdot) \Vert_{L^2(N')} < C \exp \bigl( \sfrac12(n-1) s(y) - (n-2) t \bigr). \]

Using the convolution property, we can derive an $L^\infty$-bound from this $L^2$-bound for $t \geq \sigma^2/5$ and $x, y \in N$
\begin{multline}
 |k_t|(x,y) = \left| \int_N k_{t/2}(x,z) k_{t/2}(z,y) dz \right| \leq \Vert k_{t/2} (x, \cdot) \Vert_{L^2(M)} \Vert k_{t/2}(\cdot, y) \Vert_{L^2(M)} \\ 
 < C \exp \bigl( \sfrac12 (n-1) ( s(x) + s(y) ) - (n-2)t \bigr). \label{eq:ktnminus2}
\end{multline}
The a priori estimate from Corollary \ref{Cor:Shi} now gives us bounds on the derivatives of $k_t$ for $t \geq \sigma^2 / 2$:
\[ |\nabla_1^{m_2} k_t|(x,y) < C_{m_2} \exp \left( \sfrac12 (n-1) ( s(x) + s(y) ) - (n-2)t \right) \]
By the symmetry property and Corollary \ref{Cor:Shi} again we get for $t \geq \sigma^2$ (see the derivation of (\ref{eq:CLYderder}))
\begin{equation} \label{eq:m1m2k}
|\nabla_1^{m_1} \nabla_2^{m_2} k_t|(x,y) < C_{m_1, m_2} \exp \left( \sfrac12 (n-1) ( s(x) + s(y) ) - (n-2)t \right). 
\end{equation}

We now apply the following trick to get essentially better bounds on the oscillatory part of $k_t$:
For any tensor field $h$ we can estimate its oscillatory component $h^{osc}$ by its higher derivatives along cross-sectional tori $\{ s \} \times \Tor$:
\begin{equation} \Vert h^{osc} \Vert_{L^\infty(\{ s \} \times \Tor)} \leq C_m \exp(-m s) \Vert \nabla^m h \Vert_{L^\infty( \{ s \} \times \Tor)}. \label{eq:trick} \end{equation}
This follows by $m$-fold integration and the fact that $\diam ( \{ s \} \times \Tor ) < C e^{- s}$.

Now consider $x, y \in N$ and assume $s(y) \geq s(x)$ (if not, interchange $x$ and $y$).
Assume first that $t \geq \sigma^2$.
We apply (\ref{eq:trick}) to (\ref{eq:m1m2k}) along $\{ s(y) \} \times \Tor$ with $m = n+1$ and get
\[ | \nabla_1^{m_1} \nabla_2^{m_2} k^{osc}_t|(x,y) < C_{m_1, m_2} \exp \left( - s(x) - s(y) - (n-2)t \right). \]
On the other hand, if $t < \sigma^2$, but $| s(x) - s(y) | \geq \sigma$, we apply the same argument to (\ref{eq:CLYderder}) and (\ref{eq:volest}) and obtain
\begin{multline*} 
| \nabla_1^{m_1} \nabla_2^{m_2} k^{osc}_t|(x,y) < C_{m_1, m_2} t^{-(n+m_1+m_2+n+1)/2} \exp \left( - \frac{\sigma^2}{8t} - s(x) - s(y) \right) \\
 < C_{m_1, m_2} \exp ( - s(x) - s(y) )
\end{multline*}
Hence, we have established part (b) of the Lemma.
\end{proof}

\subsection{Representing $h_t$} \label{subsec:representh}
We can use the heat kernel $k_t$ to represent $h_t = g_t - \ov g$.
Choose a smooth function $\widetilde\varphi : \IR \to [0,1]$ with $\widetilde\varphi \equiv 0$ on $(-\infty, \frac12]$ and $\widetilde\varphi \equiv 1$ on $[1,\infty)$ and define $\varphi \in C_0(N)$ by $\varphi(x) = \widetilde\varphi(s(x)/\sigma)$.

Let $(x_0,t_0) \in N \times [0,T)$.
Since we assumed $h_t$ and $\nabla h_t$ to be uniformly bounded over $N \times [0,T)$, we can use integration by parts and (\ref{eq:dhRS}) to find that for $0 \leq t < t_0$
\begin{multline*}
 \partial_t \int_N \varphi^2  k_{t_0-t}(x_0,x) h_t(x) dx 
=  \int_N \varphi^2 \bigl[ L^*_x k_{t_0-t}(x_0,x) h_t(x) \\
  \hfill +  k_{t_0-t}(x_0,x) \bigl(-L h_t + R[h_t] + \nabla^* S[h_t] \bigr)(x) \bigr] dx \;\; \\
 \qquad = \int_N \varphi^2 \bigl( k_{t_0-t}(x_0,x) R[h_t](x) + \nabla_2 k_{t_0-t} (x_0,x) * S[h_t](x) \bigr) dx \hfill \\
 \qquad + \int_N \bigl[ - 2(\varphi \triangle \varphi + |\nabla \varphi|^2) k_{t_0-t}(x_0,x) h_t(x) - 4 \varphi \nabla \varphi * k_{t_0-t}(x_0,x) * \nabla h_t(x) \\
 \hfill + 2 \varphi \nabla \varphi * k_{t_0-t}(x_0,x) * S[h_t](x) \bigr] dx .
\end{multline*}
Integrating this over $t$ from $0$ to $t_0$ yields $h_t = h_t^* + h_t^{**}$ where
\begin{equation} \label{eq:hstar}
 h_{t_0}^*(x_0) = \int_{N\times[0,t_0]} \varphi^2 \bigl( k_{t_0-t}(x_0,x) R[h_t] + \nabla_2 k_{t_0-t} (x_0,x) * S[h_t] \bigr) dx dt 
\end{equation}
and
\begin{multline} 
h_{t_0}^{**}(x_0) = (1-\varphi^2) h_{t_0}(x_0) + \int_N \varphi^2 k_{t_0}(x_0,x) h_0(x) dx \displaybreak[1] \\
\qquad  + \int_{B_\sigma (\partial N) \times [0,t_0]} \bigl[ - 2(\varphi \triangle \varphi + |\nabla \varphi|^2) k_{t_0-t}(x_0,x) h_t(x) \hfill \displaybreak[1] \\
\qquad - 4 \varphi \nabla \varphi * k_{t_0-t}(x_0,x) * \nabla h_t(x) + 2 \varphi \nabla \varphi * k_{t_0-t}(x_0,x) * S[h_t](x) \bigr] dx dt. \hfill \label{eq:hstarstar}
\end{multline}
The following Lemma gives a bound on $(h_t^{**})^{osc}$ in terms of $H$.
\begin{Lemma} \label{Lem:hstarstar}
 Assume that $|h| < 0.1$ everywhere on $N \times [0,T)$, $H$ is defined as in Theorem \ref{Thm:RdToncusp} and $\delta < n-2$.
 
 If $(x_0,t_0) \in N \times [0,T)$ with $s(x_0) \geq 2\sigma$ and $t_0 \geq \sigma^2$, then
 \[ | (h^{**}_{t_0})^{osc} | (x_0) \leq C H \exp (- s(x_0) - \delta t_0). \]
\end{Lemma}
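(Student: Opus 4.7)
The plan is to substitute the oscillatory heat kernel bound of Lemma \ref{Lem:oschk}(b) into each term of the representation formula (\ref{eq:hstarstar}). First, since $s(x_0) \geq 2\sigma$ we have $\varphi(x_0) = 1$, so the $(1-\varphi^2) h_{t_0}(x_0)$ term drops out. Because $\varphi$ depends only on $s$, it is $\Tor$-invariant, and the torus acts isometrically on $N'$ commuting with $L$ on $E$; hence the oscillatory decomposition commutes with each convolution, and extracting the $osc$-component in $x_0$ amounts to replacing $k_{t_0-t}$ by $k^{osc}_{t_0-t}$ in every remaining integrand.

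Next I would verify the hypotheses of Lemma \ref{Lem:oschk}(b) termwise. For the initial data integral the assumption $t_0 \geq \sigma^2$ directly puts us in the range $t \geq \sigma^2$ required by the lemma. For the three boundary integrands the cutoff factors $\nabla\varphi$ and $\triangle\varphi$ confine $x$ to $B_\sigma(\partial N)$, i.e.\ $s(x) \leq \sigma$, and combined with $s(x_0) \geq 2\sigma$ this yields $|s(x_0)-s(x)| \geq \sigma$. In every case Lemma \ref{Lem:oschk}(b) therefore gives
\[ |k^{osc}_{t_0-t}(x_0,x)| \leq C \exp\bigl( -s(x_0) - s(x) - (n-2)(t_0-t) \bigr). \]

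The final step assembles the estimate. On the collar $B_\sigma(\partial N) \subset B_{10\sigma}(\partial N)$ the definition of $H$ gives $|h_t|, |\nabla h_t| \leq H e^{-\delta t}$, and (\ref{eq:boundRS}) then yields $|S[h_t]| \leq C H^2 e^{-2\delta t} \leq C \varepsilon_{cusp} H e^{-\delta t}$; similarly $|h_0| \leq H$ on all of $N$ from the time-zero part of $H$. Each integrand is thus pointwise bounded by $C H e^{-\delta t} e^{-s(x_0)} e^{-s(x)} e^{-(n-2)(t_0-t)}$. The spatial integration is harmless because the cusp volume form carries the extra weight $e^{-(n-1)s}$, so $\int_N e^{-s(x)}\,d\vol$ and $\int_{B_\sigma(\partial N)} e^{-s(x)}\,d\vol$ are both finite. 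The time integral collapses as
\[ \int_0^{t_0} e^{-(n-2)(t_0-t)-\delta t}\,dt = e^{-(n-2)t_0}\!\int_0^{t_0} e^{(n-2-\delta)t}\,dt \leq \frac{1}{n-2-\delta}\, e^{-\delta t_0}, \]
which uses the hypothesis $\delta < n-2$ precisely to make the exponent positive and convert the leading rate from $e^{-(n-2)t_0}$ to $e^{-\delta t_0}$. Putting these factors together yields $|(h^{**}_{t_0})^{osc}|(x_0) \leq C H e^{-s(x_0)-\delta t_0}$. The main (and only) subtle point is the bookkeeping of exponentials: both the extra $e^{-s(x_0)}$ gain provided by the oscillatory kernel and the gap $\delta < n-2$ are essential, and either failing would destroy the bound.
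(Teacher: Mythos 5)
Your proposal is correct and follows essentially the same route as the paper: the cutoff kills the first term, Lemma \ref{Lem:oschk}(b) applies to the initial-data integral because $t_0 \geq \sigma^2$ and to the collar integrals because $|s(x_0)-s(x)| \geq \sigma$, and the time integral is handled exactly as in the paper using $\delta < n-2$. The only tiny blemish is your appeal to $H \leq \varepsilon_{cusp}$ when bounding $S[h_t]$, which is not among the lemma's hypotheses; but this is immaterial since the assumed bound $|h|<0.1$ already gives $|S[h_t]| \leq C|h_t|\,|\nabla h_t| \leq C H e^{-\delta t}$ on the collar.
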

\begin{proof}
Obviously, the the first term in (\ref{eq:hstarstar}) vanishes.
As for the second term we use Lemma \ref{Lem:oschk} and the bound $|h_0| \leq CH$ to find that
\begin{multline*} \left| \int_N \varphi^2 k_{t_0}^{osc}(x_0,x) h_0(x) dx \right| \leq C H \int_N \exp(-s(x_0) - s(x) - (n-2) t_0 ) dx \\ \leq C H \exp(-s(x_0) - (n-2)t_0). 
\end{multline*}
Now concerning the third term we use the bounds for $h$ and $\nabla h$ on $B_\sigma (\partial N) \times [0,T)$ and (\ref{eq:boundRS}) to conclude $|S_t| \leq CH e^{-\delta t}$ on on $B_\sigma (\partial N) \times [0,T)$.
Thus the third term is bounded by
\begin{multline*} 
\int_{B_\sigma(\partial N) \times [0,t_0]} C H \exp(-s(x_0) - (n-2)(t_0-t) - \delta t) dxdt \\
 \leq CH \exp (-s(x_0) - \delta t_0). \qedhere
\end{multline*}
\end{proof}

\subsection{Final argument} \label{subsec:finalcusp}
We can now use these results and Theorem \ref{Thm:cuspinv} to prove Theorem \ref{Thm:RdToncusp}.
\begin{proof}[Proof of Theorem \ref{Thm:RdToncusp}.]
Assume in the following that at least $H < 0.1$.
For any $0 < T' \leq T$, we set
\[
  \theta_{T'} = \Vert h \Vert_{L^\infty(N \times [0,T'))} \quad \text{and} \quad \eta_{T'} = \sup_{(x,t) \in N \times [0,T')} e^{s(x) + \delta t} | h^{osc}| (x,t) .
\]
Observe that by the bound on $\nabla h$ in $N \times [0, \sigma^2]$ and (\ref{eq:trick}), we have $\eta_{\sigma^2} < C H$.

By Corollary \ref{Cor:Shi} and the hypothesis of the theorem, there is some universal $\varepsilon_0 > 0$ such that
\begin{equation} \label{eq:thetaShiestimate}
 \text{if} \quad \theta_{T'} < \varepsilon_0, \quad \text{then} \quad \Vert \nabla h \Vert_{L^\infty (N \times [0,T'))} \leq C ( \theta_{T'} + H).
\end{equation}
Next, we prove that (after possibly reducing $\varepsilon_0$), we have
\begin{equation} \label{eq:etaShiestimate}
 \text{if} \quad \theta_{T'}, \eta_{T'} < \varepsilon_0, \quad \text{then} \quad | \nabla h^{osc} |(x,t) \leq C e^{-s(x)-\delta t} (\eta_{T'} + H)
\end{equation}
for all $(x,t) \in N \times [\sigma^2,T')$.

In order to do this, we use the following trick:
Let $h'_t = g^* h_t$ be the pullback of $h_t$ via an isometry $g : N \to N$ arising from a translation along $\Tor$.
Then $(h'_t + \ov{g})$ still satisfies the modified Ricci deTurck flow equation (\ref{eq:MRdTflow}).
In local coordinates, we can write as in (\ref{eq:Shiequ}) of Proposition \ref{Prop:Shi}
\[ (\partial_t - L) h = f_1(x, h) \cdot h  + f_2 (x,h) \cdot \nabla h + f_3(x,h) \cdot \nabla h \otimes \nabla h + f_4(x,h) \cdot h \otimes \nabla^2 h \]
and the same for $h'_t$.
Let $b > 0$ be a constant that we will determine later and set $d_t = b(h_t - h'_t)$.
Then $d_t$ satisfies the following evolution equation:
\begin{multline*}
 (\partial_t - L) d = f_1' (x, h, h') \cdot d + f_2 (x, h) \cdot \nabla d + (f_2' (x, h, h') \cdot d) \cdot \nabla h' \displaybreak[1] \\ 
 + f_3 (x, h) \cdot \nabla d \otimes \nabla h + f_3 (x, h) \cdot \nabla h' \otimes \nabla d \displaybreak[1] \\
 + (f_3' (x, h, h') \cdot  d) \cdot \nabla h' \otimes \nabla h' \displaybreak[1] \\
+ f_4(x,h) \cdot d \otimes \nabla^2 h + f_4 (x, h) \cdot h' \otimes \nabla^2 d + f_4' (x,h,h') \cdot d \otimes \nabla^2 h'.
\end{multline*}
Observe here that the quantities $f_1', \ldots, f_4''$ do not depend on $b$.

The calculation above shows that the vector $(h_t, h'_t, d_t)$ satisfies a parabolic equation of the form (\ref{eq:Shiequ}) (here, we group $f_2, f_2' \cdot d$ and $f_3, f_3, f'_3 \cdot d$ and $f_4, f_4, f'_4$).
Hence, we can apply Proposition \ref{Prop:Shi} and the reasoning of Corollary \ref{Cor:Shi} to obtain that if $\widetilde{H} = \max \{ \Vert h \Vert_{L^\infty (\Omega')}, \Vert h' \Vert_{L^\infty(\Omega')}, b \Vert h - h' \Vert_{L^\infty(\Omega')} \} \leq \widetilde{\varepsilon}$, then amongst others $b \Vert \nabla h - \nabla h' \Vert_{L^\infty(\Omega)} < \widetilde{C} \widetilde{H}$.
Choosing $b = \widetilde{\varepsilon} \Vert h-h' \Vert_{L^\infty(\Omega')}^{-1}$ and averaging over all pullbacks $h' = g^* h$ of isometries $g : N \to N$, yields
\[ \text{if} \quad \Vert h \Vert_{L^\infty (\Omega')}, \Vert h' \Vert_{L^\infty(\Omega')} < \widetilde{\varepsilon}, \quad \text{then} \quad \Vert \nabla h^{osc} \Vert_{L^\infty(\Omega)} < C \Vert h^{osc} \Vert_{L^\infty(\Omega')}. \]
With this estimate, we can establish (\ref{eq:etaShiestimate}).

Using (\ref{eq:thetaShiestimate}) and (\ref{eq:etaShiestimate}), we can bound $I^{inv}$ and $J^{inv}$ by (\ref{eq:Rinvest1}) and (\ref{eq:Rinvest2}) if $\theta_{T'}, \eta_{T'} < \varepsilon_0$:
\[ |I^{inv}|(x,t), \; |J^{inv}|(x,t) \leq C e^{-s(x) - \delta t} (\eta_{T'} + H)(\theta_{T'} + H) \]
for all $(x,t) \in N \times [\sigma^2,T')$.

Now Theorem \ref{Thm:cuspinv} applied to $N \times [\sigma^2, \infty)$ implies that if $H + C (\eta_{T'} + H) ( \theta_{T'} + H) < \varepsilon_{inv}$, we have a uniform bound on $h^{inv}$ and together with the bound $\Vert h^{osc} \Vert_{L^\infty(N \times [0,T'))} \leq \eta_{T'}$ and the hypothesis of the theorem this means
\begin{equation} \theta_{T'} \leq C (\eta_{T'} + H) (\theta_{T'} + H) + C H + C \eta_{T'} \leq C (\eta_{T'} + \theta_{T'}^2 + H). \label{eq:theta} \end{equation}

Next, we show that if $\theta_{T'}, \eta_{T'} < \varepsilon_0$, then $\eta_{T'}$ satisfies the bound
\begin{equation} \eta_{T'} \leq C (\eta_{T'}^2 + \theta_{T'}^2 + H). \label{eq:eta} \end{equation}
So we will need to bound $h^{osc}(x_0,t_0)$ for all $(x_0,t_0) \in N \times [0,T')$.
First observe that if $t_0 < (10 \sigma)^2$, then the quantity is bounded by $C H e^{-s(x_0)}$ what follows from (\ref{eq:trick}) and the bound $|\nabla h| \leq C H$ on $N \times [0,(10 \sigma)^2)$.
Furthermore $h^{osc}(x_0,t_0)$ is also bounded by $C H e^{-\delta t_0}$ for $(x_0, t_0) \in B_{10 \sigma}(\partial N) \times [0,T')$.

Now assume $s_0 = s(x_0) \geq 10 \sigma$ and $t_0 \geq (10\sigma)^2$.
Using the decomposition $h^{osc} = (h^*)^{osc} + (h^{**})^{osc}$ corresponding to $N \times [\sigma^2, T)$ and Lemma \ref{Lem:hstarstar}, we find that it suffices to bound $ (h^*)^{osc} (x_0,t_0)$.
If we take the oscillatory component on both sides of (\ref{eq:hstar}), we find that the component $k_{t_0-t}^{osc}(x_0,x) R^{inv}[h_t] + \nabla k_{t_0-t}^{osc} (x_0,x) S^{inv} [h_t]$ cancels out by the integration and we are left with
\[  (h_{t_0}^*)^{osc} (x_0) = \int_{N \times[\sigma^2,t_0]} \varphi^2 \bigl( k_{t_0-t}^{osc}(x_0,x) R^{osc}[h_t] + \nabla k_{t_0-t}^{osc} (x_0,x) S^{osc} [h_t] \bigr) dx dt. \]
We first estimate the terms $R^{osc}[h_t]$ and $S^{osc}[h_t]$ appropriately.
By (\ref{eq:Rinvest1}), (\ref{eq:Rinvest2}) as well as (\ref{eq:thetaShiestimate}), (\ref{eq:etaShiestimate}), we can estimate for $(x,t) \in N \times [0,T')$
\begin{alignat*}{1} 
|R^{osc}[h]|(x,t) &= | (R[h] - R[h^{inv}])^{osc} |(x,t) \leq C e^{-s(x) - \delta t}  (\eta_{T'} + H)( \theta_{T'} + H ), \displaybreak[1] \\
|S^{osc}[h]|(x,t) &= | (S[h] - S[h^{inv}])^{osc} |(x,t) \leq C e^{-s(x) - \delta t}  (\eta_{T'} + H)( \theta_{T'} + H ).
\end{alignat*}
Now we split the domain $N \times [\sigma^2, t_0]$ into $\Omega = [s_0 - \sigma, s_0 + \sigma] \times \Tor \times [t_0-\sigma^2,t_0]$ and $N \times [\sigma^2 ,t_0] \setminus \Omega$ and use Lemma \ref{Lem:oschk} to conclude
\begin{alignat*}{1}
 |(h_{t_0}^*)^{osc}|(x_0) &\leq \int_{\Omega} \varphi^2 \bigl( k_{t_0-t}(x_0,x) |R^{osc}| [h_t] + \nabla k_{t_0-t}(x_0,x) | S^{osc} |[h_t] \bigr) dx dt \\ 
& \qquad + \int_{N \times [\sigma^2,t_0] \setminus \Omega} C e^{-s_0 - s(x) - (n-2)(t_0-t)} \bigl( |R^{osc}| [h_t] + |S^{osc}| [h_t] \bigr) dx dt \displaybreak[1] \\
&  \leq C e^{-s_0-\delta t_0} (\eta_{T'} + H)(\theta_{T'} + H) + C  (\eta_{T'} + H)(\theta_{T'} + H) \times   \\
& \qquad  \int_0^{t_0} \int_0^\infty e^{-s_0 - s - (n-2)(t_0-t)} e^{-(n-1)s} e^{-s - \delta t} ds dt \displaybreak[1] \\
&  \leq C e^{-s_0 - \delta t_0} (\eta_{T'} + H)(\theta_{T'} + H).
\end{alignat*}
This establishes (\ref{eq:eta}).

Putting (\ref{eq:eta}) and (\ref{eq:theta}) together (and possibly reducing $\varepsilon_0$ again), we conclude that if $\theta_{T'} + \eta_{T'} + H < \varepsilon_0$, then we have
\[ \theta_{T'} + \eta_{T'} \leq C_0 (\theta_{T'} + \eta_{T'})^2 + C_0 H \]
for some uniform constant $C_0$ which is independent of $T'$.
Moreover by the hypothesis of the Theorem, we have $\theta_{\sigma^2} + \eta_{\sigma^2} \leq C_1 H$.
Set $\varepsilon = \min \{ (2 C_0)^{-1}, \frac{\varepsilon_0}{2} \}$, $\varepsilon_{cusp} = \min \{ (2C_0)^{-1}, C_1^{-1}, 1 \} \varepsilon$ and assume that $H < \varepsilon_{cusp}$.
Hence $\theta_{\sigma^2} + \eta_{\sigma^2} < \varepsilon$.
Now if $\theta_T + \eta_T \geq \varepsilon$, then there would be some time $T' \in (\sigma^2, T]$ with $\theta_{T'} + \eta_{T'} = \varepsilon$ (note that we can use higher derivative estimates and (\ref{eq:trick}) to conclude that $\eta_{T'}$ is continuous in $T'$) and hence
\[ \varepsilon < C_0 (\varepsilon^2 + \varepsilon_{cusp}) \leq \tfrac12 \varepsilon + \tfrac12 \varepsilon, \]
a contradiction.
So $\theta_T + \eta_T < \varepsilon$ and we conclude
\[ \theta_T + \eta_T \leq 2 C_0 H. \qedhere \]
\end{proof}

\section{Invariant modified Ricci deTurck flow on the cusp} \label{sec:invRF}
\subsection{Calculations} \label{subsec:Calculations}
In this section we are concerned with the proof of Theorem \ref{Thm:cuspinv}.
As in the last section or in subsection \ref{subsec:Hypmfs}, denote by $(N = [0,\infty) \times \Tor, \ov g)$ a hyperbolic cusp with coordinates $s=x_1, x_2, \ldots, x_n$ and metric 
\[ \ov g = ds^2 + e^{-2s} (dx_2^2 + \ldots + dx_n^2). \]
Since we will only be dealing with ($\Tor$-)invariant tensor fields in this section, we will abbreviate the perturbation $(h^{inv}_t)$ resp. the metric $(g^{inv}_t) = (\ov g + h^{inv}_t)$ by $(h_t)$ resp. $(g_t)$ and the input terms $(I^{inv}_t)$ resp. $(J^{inv}_t)$ by $(I_t)$ resp. $(J_t)$.
Furthermore, we always assume $|h_t|<0.1$.

Our first goal is to express the modified Ricci deTurck flow equation (\ref{eq:RdTIJ}) in terms of the coordinate entries of $h_t$ as well as $I_t$ and $J_t$.
Recall from (\ref{eq:MRdTflow}), that this equation can be written as
\begin{equation} \label{eq:evoleq}
\dot h_t = - 2 \Ric_{g_t} - 2(n-1) g_t - \mathcal{L}_{X_{\ov g}(g_t)} g_t + I_t + \nabla^* J_t. 
\end{equation}

For the moment fix some time $t$ and write $h = h_t$, $I = I_t$ and $J = J_t$.
We express $h$ as
\[ h = A d s^2 + e^{-s} V_i (d x_i d s + d s d x_i) + e^{-2s} M_{ij} d x_i d x_j \]
where $A, V_i, M_{ij}$ only depend on $s$.
We can visualize $h$ in block matrix form
\[ h = \Mat{A}{e^{-s} V_i}{e^{-2s} M_{ij}} \]
where we omit the lower left entry, since it equals the transpose of the upper right one.

The first covariant derivatives of $h$ are ($k > 1$, a prime will always denote differentiation with respect to $s$)
\begin{alignat*}{1}
\nabla_1 h &= \Mat{A'}{e^{-s} V'_i}{e^{-2s} M'_{ij}} \displaybreak[1] \\ 
e^{s} \nabla_k h &= \Mat{2 V_k}{e^{-s} (M_{ki} - \delta_{ki} A)}{e^{-2s} ( - \delta_{ki} V_j - \delta_{kj} V_i ) }
\end{alignat*}
and the second covariant derivatives of $h$ are ($k,l > 1$, $k \not= l$)
\begin{alignat*}{1}
\nabla_{11} h &= \Mat{A''}{e^{-s} V''_i}{e^{-2s} M''_{ij}} \\
e^{s} \nabla_{1k} h &= \Mat{2 V'_k}{e^{-s}(M'_{ki} - \delta_{ki} A')}{e^{-2s}(-\delta_{ki} V'_j - \delta_{kj} V'_i)} \\
e^{s} \nabla_{k1} h &= \Mat{2 V'_k+2 V_k}{e^{-s}( M'_{ki} - \delta_{ki} A' + M_{ki} - \delta_{ki} A)}{e^{-2s}(-\delta_{ki} V'_j - \delta_{kj} V'_i - \delta_{ki} V_j - \delta_{kj} V_i)} \\
e^{2s} \nabla_{lk} h &= \Mat{2 M_{kl} }{e^{-s} ( - \delta_{ki} V_l - 2 \delta_{li} V_k)}{e^{-2s} (- \delta_{li} M_{kj} - \delta_{lj} M_{ki} + \delta_{li} \delta_{kj} A + \delta_{lj} \delta_{ki} A)} \\
e^{2s} \nabla_{kk} h &= \Mat{2 M_{kk} - 2 A - A'}{e^{-s}(-V_i - 3 \delta_{ki} V_k - V'_i)}{e^{-2s} ( - \delta_{ki} M_{kj} - \delta_{kj} M_{ki} + 2 \delta_{ki} \delta_{kj} A - M'_{ij})}
\end{alignat*}

Recall from subsection \ref{subsec:MRdT} and (\ref{eq:Lonhyp}), that we can express the Ricci curvature of $g$ in terms of $h$ by the following formula:
\begin{equation} \label{eq:Ric2}
\begin{split} 
 2 \Ric_{ab} &= - 2(n-1) \ov g_{ab} - 2 n h_{ab} + 2 \ov g^{uv} h_{uv} \ov g_{ab} \\
&\qquad + \ov g^{uv} (\nabla^2_{au} h_{bv} + \nabla^2_{bu} h_{av} - \nabla^2_{uv} h_{ab} - \nabla^2_{ab} h_{uv}) \\
 &\qquad\qquad + (g^{uv} - \ov g^{uv}) (\nabla^2_{ua} h_{bv} + \nabla^2_{ub} h_{av} - \nabla^2_{uv} h_{ab}  - \nabla^2_{ab} h_{uv})\\
 &\qquad + g^{uv} g^{pq} (\nabla_u h_{pa} \nabla_v h_{qb} - \nabla_p h_{ua} \nabla_v h_{qb} + \sfrac12 \nabla_a h_{up} \nabla_b h_{vq}) \\
 &\qquad + g^{uv}( - \nabla_u h_{vp} + \sfrac12 \nabla_p h_{uv}) g^{pq} (\nabla_a h_{qb} + \nabla_b h_{qa} - \nabla_q h_{ab}) 
\end{split}
\end{equation}
Moreover, by (\ref{eq:Lieder})
\begin{multline*} 
\mathcal{L}_{ab} = (\mathcal{L}_{X_{\ov{g}}(g)} g)_{ab} = X^u \nabla_u h_{ab} + g_{au} \nabla_b X^u + g_{bu} \nabla_a X^u \\
 \text{where} \qquad X^u =  \ov{g}^{uv} \ov{g}^{pq} (- \nabla_p (\log g)_{qv} + \sfrac12 \nabla_v (\log g)_{pq}).
\end{multline*}
It is clear that the $e^{-s}$-terms in both equations cancel in such a way that there is no such factor in the expression for $\Ric_{11}$ and $\mathcal{L}_{11}$, an $e^{-s}$ factor in the expression for $\Ric_{1b} = \Ric_{b1}$ and $\mathcal{L}_{1b} = \mathcal{L}_{b1}, (b>1)$ and an $e^{-2s}$ factor in the expression for $\Ric_{ab}$ and $\mathcal{L}_{ab}, (a,b>1)$.
So without loss of generality, we can simplify our calculations by considering the case $s=0$. 

We will only be interested in the structure of the evolution equation (\ref{eq:evoleq}) for $(h_t)$ rather than its explicit terms.
Our idea is that $M$ will be the main term in the nonlinear part and the influence of $A, V$ is very small.
Having that in mind, we decompose
\[ h = \widehat{h} + \check{h} = \Mat{0}{0}{M} + \Mat{A}{V}{0} \quad \text{and} \quad \widehat g = \ov g + \widehat h. \]
Let $\widehat{\Ric}_{ab}$ be the Ricci tensor corresponding to $\widehat g$.
Note that for symmetry reasons $\widehat{\Ric}_{1b} = 0$ for $b > 1$.
In the first step, we estimate $\Ric_{ab} - \widehat{\Ric}_{ab}$.
This difference has the following algebraic structure: It is a sum of terms $\mathcal{X}$ which can be categorized into the following types
\begin{enumerate}[(i)]
\item $\mathcal{X}$ doesn't depend on any derivative of $A, V, M$.
If $\mathcal{X}$ vanishes of order $1$, say for $A = 0$, then we write $\mathcal{X}=1 * A$.
This implies $|\mathcal{X}| \leq C |A|$.
If it only vanishes for $A=V=0$, we write in a sloppy way $\mathcal{X}=1*(A+V)$, meaning $|\mathcal{X}| \leq C(|A|+|V|)$ etc.
If $\mathcal{X}$ vanishes of order $2$, e.g. if $|\mathcal{X}| \leq C |A| |V|$, we write $\mathcal{X} = A*V$.
\item $\mathcal{X}$ depends linearly on $A', V', M'$, but the coefficients of this linear form might depend nonlinearly on $A, V, M$.
We will abbreviate those terms by $1 * A', 1 * V', 1 * M'$ or just sloppy by $1 * (A' + V' + M')$.
If all coefficients even vanish for, say $A=0$, we write $\mathcal{X} = A * A'$ etc.
\item $\mathcal{X}$ depends bilinearly on $A', V', M'$, but the coefficients might depend nonlinearly on $A, V, M$.
We will abbreviate those terms by $A' * A', A' * V', \ldots, M' * M'$ or more general by $(A' + V' + M')*(A' + V' + M')$.
\item $\mathcal{X}$ depends linearly on $A'', V'', M''$, but the coefficients might depend nonlinearly on $A, V, M$.
\end{enumerate}

We will first determine all terms of type (iv) in $\Ric_{ab} - \widehat{\Ric}_{ab}$.
Those are only produced whenever there is a $\nabla^2_{11} h_{ij}$ term, so ($a,b > 1$)
\begin{alignat*}{1}
 2 \Ric_{11} - 2 \widehat{\Ric}_{11} &\mathop{\equiv}_{\text{(iv) terms}} 2 g^{1v} \nabla_{11}^2 h_{1v} - g^{11} \nabla^2_{11} h_{11} - g^{uv} \nabla^2_{11} h_{uv}   \\
 &\qquad\quad -2 \widehat g^{1v} \nabla_{11}^2 \widehat h_{1v} + \widehat g^{11} \nabla^2_{11} \widehat h_{11} + \widehat g^{uv} \nabla^2_{11} \widehat h_{uv} \displaybreak[1]  \\ 
 &\qquad =  \sum_{u,v =2}^n  (\widehat g^{uv} - g^{uv}) M''_{uv} \\ \displaybreak[1]
 &\qquad = \biggl( \sum_{u,v =2}^n  (\widehat g^{uv} - g^{uv}) M'_{uv} \biggr)' -  \sum_{u,v =2}^n  (\widehat g^{uv} - g^{uv})' M'_{uv} \displaybreak[2] \\
 2 \Ric_{1b} - 2 \widehat \Ric_{1b} &\mathop{\equiv}_{\text{(iv) terms}} (g^{1v} \nabla^2_{11} h_{bv} - g^{11} \nabla^2_{11} h_{b1}) = \sum_{v=2}^n g^{1v} M''_{bv} \\
 &\qquad = \biggl( \sum_{v=2}^n g^{1v} M'_{bv} \biggr)' - \sum_{v=2}^n (g^{1v})' M'_{bv} \displaybreak[2] \\
 2 \Ric_{ab} - 2 \widehat \Ric_{ab} &\mathop{\equiv}_{\text{(iv) terms}} - g^{11} \nabla^2_{11} h_{ab} + \widehat g^{11} \nabla^2_{11} \widehat h_{ab} = (1 - g^{11}) M''_{ab} \\ 
 &\qquad = \left( (1- g^{11}) M'_{ab} \right)' + (g^{11})' M'_{ab}
\end{alignat*}
Now observe that if at some point we have $A=A'=A''=0$ and $V = V' = V'' =0$, then $\Ric_{ab} - \widehat{\Ric}_{ab} = 0$.
So the sum of all terms of type (ii) in $\Ric_{ab} - \widehat{\Ric}_{ab}$ which are of the form $1*M'$, is even of the form $(A+V)*M'$.
Hence for $a,b \geq 1$
\begin{multline*}
 \Ric_{ab} - \widehat \Ric_{ab} = S_1' + 1*(A+V) + 1*(A'+V') + (A+V)* M' \\
 + (A' + V' + M') * (A' + V' + M').
\end{multline*}
where the divergence term has the form $S_1 =  (A+V)*M'$.

Secondly, we express $\widehat \Ric_{ab}$ in terms of $M$.
We use again (\ref{eq:Ric2}) and substiute $h$ and $g$ by $\widehat h$ and $\widehat g$.
Denote by $\widehat T^1_{ab}, \widehat T^2_{ab}, \widehat T^3_{ab}$ the expression in the first three lines, the fourth line and the fifth line on the right hand side.
Then we compute ($a,b > 1$)
\begin{alignat*}{1}
 \widehat T^1_{11} &= -2(n-1) + (E+M)^{uv} (- M''_{uv} + 2 M'_{uv} ) \\
 \widehat T^1_{ab} &= -2(n-1)E_{ab} - M''_{ab} + 2 M'_{ab} - 2 M_{ab} + (E+M)^{uv} \\
  & \qquad\qquad  ( - 2 E_{uv} M_{ab} + 2 E_{au} M_{vb} - E_{ua} M'_{vb} - E_{ub} M'_{va} + E_{uv} M'_{ab} + E_{ab} M'_{uv} ) \\
  \widehat T^2_{11} &= \sfrac12 (E+M)^{uv} (E+M)^{pq} M'_{up} M'_{vq} = M' * M' \\
  \widehat T^2_{ab} &= (E+M)^{uv} \left( M'_{au} M'_{bv} - M'_{au} M_{vb} - M'_{bu} M_{va} + 2 M_{au} M_{bv} \right) \\
  \widehat T^3_{11} &= 0 \\
  \widehat T^3_{ab} &= -(2 M_{ab} - M'_{ab} ) (E+M)^{uv} (M_{uv} - \sfrac12 M'_{uv}) 
\end{alignat*}
So $2\widehat\Ric = \widehat T^1 + \widehat T^2 + \widehat T^3$ equals
\begin{alignat*}{1}
  2\widehat\Ric_{11} &= - 2(n-1) - \tr (E+M)^{-1} M'' + 2 \tr (E+M)^{-1} M' + M' * M' \displaybreak[1]  \\
  2\widehat\Ric_{ab} &= -2(n-1)(E+M) - M'' + (n-1) M' \\
  & \qquad + \left( \tr (E+M)^{-1} M' \right) (E + M) + M' * M'
\end{alignat*}

We will now carry out the same analysis for the Lie-derivative term $\mathcal{L}_{ab}$.
Set $\widehat{\mathcal{L}}_{ab} = (\mathcal{L}_{X_{\ov{g}}(\widehat g)} \widehat g)_{ab}$.
It is easy to see that $\mathcal{L}_{ab}$ and $\widehat{\mathcal{L}}_{ab}$ can also be expressed as a sum of terms of type (i)-(iv).
Observe also, that terms of type (iv) only occur in $\mathcal{L}_{ab} - \widehat{\mathcal{L}}_{ab}$ if $a$ or $b$ equals $1$.
In this case we determine $(b>1)$
\begin{alignat*}{1}
\mathcal{L}_{11} - \widehat{\mathcal{L}}_{11} &\hspace{-2mm}\mathop{\equiv}_{\text{(iv) terms}} \hspace{-3mm} - 2 g_{1u} \nabla^2_{11} (\log g)_{1u} + g_{11} \nabla^2_{11} (\log g)_{pp} + 2 \nabla^2_{11} (\log \widehat g)_{11} - \nabla^2_{11} (\log \widehat g)_{pp} \\
&  = - 2 (1+A) (\log g)''_{11} - 2 V_u (\log g)''_{1u} + A \tr (\log g)'' + \tr (\log g - \log \widehat g)'' \displaybreak[0] \\
& = \bigr(-2 (\log g)'_{11} + 2A' - 2 A (\log g)'_{11} - 2 V_u (\log g)'_{1u} \\
& \qquad + A \tr (\log g)' + \tr (\log g - \log \widehat g)' - A' \bigr)' \\
& \qquad - A'' + (A'+V'+M')*(A'+V'+M') \displaybreak[3]  \\
\mathcal{L}_{1b} - \widehat{\mathcal{L}}_{1b} &\hspace{-2mm}\mathop{\equiv}_{\text{(iv) terms}} \hspace{-3mm} - g_{bu} \nabla^2_{11} (\log g)_{1u} + \tfrac12 g_{b1} \nabla^2_{11} (\log g)_{pp} \\
&  = - V_b (\log g)''_{11} - (E+M)_{bu} (\log g)''_{1u} + \tfrac12 V_b \tr (\log g)'' \displaybreak[0] \\
& = \left( - V_b (\log g)'_{11} - M_{bu} (\log g)'_{1u} - (\log g)'_{1b} + V_b' + \tfrac12 V_b \tr (\log g)' \right)' - V_b'' \\
& \qquad  + (A'+V'+M')*(A'+V'+M')
\end{alignat*}
Note that the terms $-2(\log g)_{11} + 2 A$, $\tr (\log g - \log \widehat{g}) - A$ and $-(\log g)_{1b} + V_b$ are of type $(A+V)*(A+V+M)$, so their derivatives are of the form $(A + V)*(A' + V' + M')+(A+V+M)*(A'+V')$.
Hence both divergence terms are of this form.
By the same argument as used to analyze $\Ric_{ab} - \widehat{\Ric}_{ab}$, we conclude that $(a,b \geq 1)$
\begin{multline*}
 \mathcal{L}_{ab} - \widehat{\mathcal{L}}_{ab} = - \Mat{A''}{V''}{0} + S_2' + 1*(A+V) + 1*(A'+V') + (A+V)* M' \\
 + (A' + V' + M') * (A' + V' + M'),
\end{multline*}
where $S_2 = (A + V)*(A' + V' + M')+(A+V+M)*(A'+V')$.

We finally compute $\widehat{\mathcal{L}}_{ab}$.
First note that
\[ \widehat X^1 = - \tr \log (E+M) + \sfrac12 \tr (E+M)^{-1} M', \qquad \widehat X^2 = \ldots = \widehat X^{n-1} = 0. \]
Hence
\begin{alignat*}{1}
\widehat{\mathcal{L}}_{11} &= - 2 \tr (E+M)^{-1} M' + \tr (E+M)^{-1} M'' + M' * M' \\
\widehat{\mathcal{L}}_{1b} &= 0 \\
\widehat{\mathcal{L}}_{ab} &= \left( \tr \log (E+M) - \sfrac12 \tr (E+M)^{-1} M' \right) (- M' + 2M + 2E).
\end{alignat*}

Combining all these results, we obtain the structure of the right hand side of (\ref{eq:evoleq}) without the input term: $D_{ab} = - 2 \Ric_{ab} - 2(n-1) g_{ab} - \mathcal{L}_{ab}$.
Note that by (\ref{eq:Linv1})-(\ref{eq:Linv3}), we can identify all occuring $1*(A+V)$ and $1*(A'+V')$ terms.
Before we write down the structure of the $D_{ab}$, we introduce another parameter $F = \tr \log (E+M)$ which is well defined and bounded since we assumed $|M|<0.1$.
Note that $F' = \tr (E+M)^{-1} M'$.
Now for $a,b > 1$ we have
\begin{alignat*}{1}
D_{11} &= A'' + S_A' - (n-1) A' - 2(n-1)A \\
& \qquad + (A+V+M) * (A+ V) + (A+V+M)*(A'+V') \\
& \qquad + (A+V)*M' + (A'+V'+M')*(A'+V'+M') \displaybreak[1]  \\
D_{1b} &= V_b'' + (S_{V_b})' - (n-1) V' - n V \\
& \qquad + (A+V+M) * (A+ V) + (A+V+M)*(A'+V') \\
& \qquad + (A+V)*M' + (A'+V'+M')*(A'+V'+M') \displaybreak[1] \\
D_{ab} &= M_{ab}'' + (S_{M_{ab}})' - (n-1) M'- 2F E_{ab} \qquad\quad  + F * M + F*M' \\
& \qquad + (A+V+M) * (A+ V) + (A+V+M)*(A'+V') \\
& \qquad + (A+V)*M' + (A'+V'+M')*(A'+V'+M')
\end{alignat*}
where $S_A$, $S_V$ and $S_M$ are of the form $(A + V)*(A' + V' + M')+(A+V+M)*(A'+V')$.
It will be essential later that the three expressions above, do not contain any $M * M'$ term which is not already $(A+V) * M * M'$.\label{page:MRdF}
For this property it is important that we are dealing with \emph{modified} Ricci deTurck flow instead of standard Ricci deTurck flow.
Otherwise, the term $(\tr (E+M)^{-1} M' )(M+E)$ in $2 \widehat{\Ric}_{ab}$ would not have canceled with the corresponding term in $\widehat{\mathcal{L}}_{ab}$ and would have created a term of the form $( \tr M' - \tr (E+M)^{-1} M' )(M+E)$ which we are not able to deal with by our methods.

The flow equation (\ref{eq:evoleq}) is equivalent to
\begin{equation} \label{eq:flowAVM}
\begin{split} 
\dot A = D_{11} + I_{11} + \nabla^* J_{11}, \quad \dot V_b &= D_{1b} + I_{1b} + \nabla^* J_{1b}, \\
&\qquad\qquad \dot M_{ab} = D_{ab} + I_{ab} + \nabla^* J_{ab}.
\end{split}
\end{equation}
We will now determine the influence of $I$ and $J$.
Express
\[ I = \Mat{I_{11}}{ e^{-s} I_{1i}}{ e^{-2s} I_{ij} } \]
and 
\[ J = \partial_s \otimes \Mat{J^1_{11}}{ e^{-s} J^1_{1i}}{ e^{-2s} J^1_{ij} } + \sum_{k=2}^n e^s \partial_{x_k} \otimes \Mat{J^1_{11}}{ e^{-s} J^k_{1i}}{ e^{-2s} J^k_{ij} }. \]
Then
\[ \nabla^* J = - \Mat{(J^1_{11})'}{ e^{-s} ( J^1_{1i})'}{ e^{-2s} (J^1_{ij})'} + \Mat{J^1_{11} - 2 \sum_{k=2}^n  J^k_{1k} }{e^{-s} (J^1_{1i} + J^i_{11} - \sum_{k=2}^n J^k_{ki} ) }{e^{-2s} (J^1_{ij} + J^i_{1j} + J^j_{1i})} . \]
So we can redecompose the input term as
\begin{alignat*}{3}
 I + \nabla^* J &= &- & \Mat{(J^1_{11})'}{ e^{-s} (J^1_{1i})'}{ e^{-2s} (J^1_{ij})'} && + \Mat{J^1_{11} + \ldots + I_{11}}{e^{-s} ( J^1_{1i} + \ldots + I_{1i}) }{e^{-2s} ( J^1_{ij} + \ldots + I_{ij})} \\
 &= && \Mat{\widetilde I_{11}}{\widetilde I_{1i}}{\widetilde I_{ij}} && + \Mat{\widetilde J_{11}'}{e^{-s} \widetilde J_{1i}'}{e^{-2s} \widetilde J_{ij}'}. 
\end{alignat*}

From the flow equations (\ref{eq:flowAVM}) we deduce the evolution for $F$ (note that $F \tr (E+M)^{-1} = (n-1) F + F*M$):
\begin{multline*}
\dot F = \tr (E+M)^{-1} \dot M = F'' + S_F' - (n-1) F' - 2(n-1) F  \\
 \qquad + (A+V+M) * (A+V) + (A+V+M)*(A'+V') + (A+V)*M'  \\
 \qquad  + (A'+V'+M')*(A'+V'+M') + F * M + F*M' \\
 \qquad + \tr (E+M)^{-1} \widetilde I_{M} + \tr [ (E+M)^{-1} M' (E+M)^{-1} \widetilde J_M] + [ \tr (E+M)^{-1} \widetilde J_{M} ]',
\end{multline*}
where $S_F = (A+V)*(A'+V'+M') + (A+V+M)*(A'+V')$ and $\widetilde I_M$ resp. $\widetilde J_M$ denote the lower-right block of $e^{2s} \widetilde I$ resp. $e^{2s} \widetilde J$.

We can finally conclude the discussion of the structure of the invariant Ricci deTurck flow equation (\ref{eq:RdTIJ}).
Observe that $(A, F ,V,M)$ satisfies a system of nonlinear heat equations (of one rank higher than the original equation) with input terms $\widetilde I, \widetilde J$.
Group $(A, F, V)$ into one $n+1$-dimensinal quantity $v$ and denote $M$ by $u$.
The input terms $\widetilde I_M, \widetilde J_M$ are now denoted by $I_u, J_u$ and the terms $\widetilde I_{A}, \widetilde I_{V}, \tr (E+M)^{-1} \widetilde I_M$ resp. $\widetilde J_A, \widetilde J_V, \tr (E+M)^{-1} \widetilde J_M$ are denoted by $I_v$ resp. $J_v$.
Then the modified Ricci deTurck flow equation is of the form
\begin{alignat*}{4}
\dot u &= u'' - (n-1) u' \;\; &&  + R_u + S'_u \;\; &&+ I_u \;\; &&+ J_u' \\
\dot v &= v'' - (n-1) v' - \twocoeff{2(n-1)}{n} v \;\; && + R_v + S'_v \;\; &&+ I_v + u' * J_u \;\; &&+ J_v', 
\end{alignat*}
where $\twocoeff{2(n-1)}{n}$ means that we have to choose the coefficient $2(n-1)$ for the $A$- and $F$-component and $n$ for the $V$-component of $v$.
One can think of it as a diagonal matrix.
Furthermore, the nonlinear terms are
\begin{alignat}{1}
R_u &= 1*v + (u + v) * v' + v * u' + (u'+v')*(u'+v') \label{eq:struc1} \\
R_v &= ( u + v) * v + (u+v) * v' + v * u' + (u'+v')*(u'+v') \label{eq:struc2} \\
S_u, S_v &= v * (u' + v') + u * v'. \label{eq:struc3}
\end{alignat}
We can simplify these equations by using instead of $(s,t)$ the coordinates $(x,t)$ with $x = s - (n-1)t$:
\begin{subequations}
\begin{alignat}{3}
\dot u &= u''  \;\; && + R_u + S'_u  \;\; &&+ I_u + J_u' \label{eq:flow1} \\
\dot v &= v'' - \twocoeff{2(n-1)}{n} v \;\; && + R_v + S'_v \;\; &&+ I_v + J_v' + u' * J_u. \label{eq:flow2}
\end{alignat}
\end{subequations}
From now on, we will be dealing with these two equations only.

We can now reformulate Theorem \ref{Thm:cuspinv} as a statement for the system (\ref{eq:flow1}), (\ref{eq:flow2}).
Observe that in the $(x,t)$ coordinates, the domain, on which the quantities $u,v$ etc. are defined, is
\[ D = \{ (x,t) \;\; : \;\; x \geq -(n-1)t, \;\; 0 \leq t < T \}. \]
Its parabolic boundary $\partial_p D$ consists of the lines $\{ (-(n-1) t, t) \; : \; 0 \leq t < T \}$ and $\{ (x,0) \; : \; x \geq 0 \}$.
Denote by
\[ B_{\sigma}(\partial_p D) = \{ (x,t) \in D \;\; : \;\; x < -(n-1)t +  \sigma \} \cup  (D \cap \IR \times [0,\sigma^2) ) \]
the $\sigma$-tubular neighborhood around $\partial_p D$.
Observe that in order to prove Theorem \ref{Thm:cuspinv}, it suffices to establish the following fact:

\begin{Proposition} \label{Prop:cuspinv2}
 Assume that $u,v$ and $I_u, I_v, J_u, J_v$ are defined on the domain $D$ and satisfy equations (\ref{eq:flow1}) and (\ref{eq:flow2}).
 Moreover, assume that $u,v$ and their spatial derivatives up to third order are bounded on $D$ by some constant. \\
 Let $\delta > 0$.
 Then there are constants $\varepsilon_{inv} > 0$ and $C_{inv} < \infty$ depending only on $\delta$ and $n$ such that if
\begin{multline*}
 H = \sup_{B_{9 \sigma}(\partial_p D)} \big(|u| + |u'| + |v| + |v'| \big) + \sup_{(x,t) \in D} e^{x + (n-1 + \delta) t} \times \\ \big( |I_u|(x,t) + |I_v|(x,t) + |J_u|(x,t) + |J_v|(x,t) \big) < \varepsilon_{inv},
\end{multline*}
then $\Vert u \Vert_{L^\infty(D)} + \Vert v \Vert_{L^\infty (D)} \leq C_{inv} H$.
\end{Proposition}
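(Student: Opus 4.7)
The plan is to recast the coupled system (\ref{eq:flow1})--(\ref{eq:flow2}) as a fixed-point problem in two carefully chosen function spaces on the wedge $D$, following the Koch--Lamm philosophy \cite{KL}: set up critical norms adapted to the one-dimensional heat semigroup in which the nonlinearity is a bounded bilinear form, and close by contraction. The intuitive dichotomy between the two equations is that $v$ satisfies a heat equation with a positive mass term $\twocoeff{2(n-1)}{n} v$ and therefore should decay exponentially in $t$, whereas $u$ satisfies a bare heat equation with quadratic nonlinearity and \emph{no} linear decay at all. The $L^\infty$-bound on $u$ must therefore be extracted purely from the algebraic structure of the nonlinearity combined with the decay of $v$ and of the input terms.

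First I would introduce an exponentially weighted sup norm
\[ \Vert v \Vert_{X_v} = \sup_{(x,t) \in D} e^{\delta t} |v|(x,t) \]
and for $u$ a Koch--Lamm-type norm of the form
\[ \Vert u \Vert_{X_u} = \Vert u \Vert_{L^\infty(D)} + \sup_Q r_Q \cdot |Q|^{-1/p} \Vert u' \Vert_{L^p(Q)}, \]
where $Q = [x_0 - r_Q, x_0 + r_Q] \times [t_0 - r_Q^2, t_0]$ ranges over parabolic cylinders in $D$ and $1 < p < \infty$ is chosen so that Lemma \ref{Lem:CZ} applies. The second piece of $\Vert u \Vert_{X_u}$ is scale-invariant at the critical level and is precisely what allows the divergence-form term $S_u'$ to be absorbed via Calder\'on--Zygmund estimates.

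Second, I would write a Duhamel representation $u = u^{**} + u^*$ (and analogously for $v$, but using the kernel $\ov\Phi(x,t) = e^{-\zeta t} \Phi(x,t)$ with $\zeta = \twocoeff{2(n-1)}{n}$ to absorb the mass term) in the spirit of subsection \ref{subsec:representh}: cut off by a function $\varphi$ supported away from the parabolic boundary $\partial_p D$, integrate by parts against the one-dimensional heat kernel $\Phi$, and collect the interior contribution $u^*$ and the boundary contribution $u^{**}$. The boundary term is controlled directly by $H$ through the hypothesis on $B_{9\sigma}(\partial_p D)$. The interior term is a convolution of $R_u, I_u$ against $\Phi$ and of $S_u, J_u$ against $\Phi'$; the $\Phi'$-convolutions are estimated in $X_u$ using Lemma \ref{Lem:CZ}, while the direct convolutions are estimated by Young's inequality. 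The inhomogeneous terms $I_u, I_v, J_u, J_v$ contribute $O(H)$ by their pointwise decay $e^{-x-(n-1+\delta)t}$, with the analogous $\ov\Phi$-convolutions for $v$ using the second half of Lemma \ref{Lem:CZ}.

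The decisive point, and the main obstacle, is that the $u$-equation contains no linear decay, so the closure must come entirely from the algebraic structure (\ref{eq:struc1})--(\ref{eq:struc3}): each term in $R_u$ and $S_u$ contains either a factor of $v$, or a product of two derivatives. Paired either with the $e^{-\delta t}$ factor from $\Vert v \Vert_{X_v}$ or with the parabolic derivative gain from $X_u$, every nonlinear contribution becomes bilinear in $(\Vert u \Vert_{X_u}, \Vert v \Vert_{X_v})$. It is exactly here that the \emph{modification} of Ricci deTurck flow enters: as remarked on page \pageref{page:MRdF}, unmodified Ricci deTurck flow would produce a term $(\tr M' - \tr(E+M)^{-1} M')(E+M)$ in $D_{ab}$, i.e.\ a term of type $u*u'$ in $R_u$ with no $v$-factor, which is not controllable in this scheme. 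Granted the structural lemma, the bilinear estimate has the schematic form
\[ \Vert \mathcal{N}(u,v) \Vert_{X_u \times X_v} \leq C \bigl( \Vert u \Vert_{X_u} + \Vert v \Vert_{X_v} \bigr)^2 + C H, \]
where $\mathcal{N}$ packages $R_u, S_u, R_v, S_v$. A continuity argument in $T' \leq T$, started from the short-time window $[0, (9\sigma)^2)$ where the hypothesis gives $\Vert u \Vert_{X_u} + \Vert v \Vert_{X_v} \leq CH$, then propagates the bound to all of $D$ provided $H < \varepsilon_{inv}$ is small enough, yielding $\Vert u \Vert_{L^\infty} + \Vert v \Vert_{L^\infty} \leq C_{inv} H$ as required.
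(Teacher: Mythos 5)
Your overall architecture (Duhamel representation with the kernels $\Phi$ and $\ov\Phi$, cutoff near $\partial_p D$, Koch--Lamm-type scale-critical norms, recognition that the modification of the flow is what removes the uncontrollable $u*u'$ term, closure by a continuity argument in $T'$) is indeed the skeleton of the paper's proof. But there is a genuine gap in the choice of norm for $v$: you take $\Vert v \Vert_{X_v} = \sup_D e^{\delta t}|v|$ and build the closure of the $u$-estimate on pairing the $1*v$ term of $R_u$ with this exponential factor. This norm cannot be propagated. The right-hand side $R_v$ of (\ref{eq:flow2}) contains the term $(u'+v')*(u'+v')$, in particular $u'*u'$, and in any critical norm for $u$ (yours included) one only gets $u' \sim t^{-1/2}$, hence $u'*u' \sim t^{-1}$; the Duhamel integral $\int_0^t e^{-\zeta(t-s)} s^{-1}\, ds \sim t^{-1}$ then decays only polynomially, so the bilinear estimate $\Vert \mathcal{N}(u,v)\Vert_{X_v} \lesssim (\Vert u\Vert_{X_u}+\Vert v\Vert_{X_v})^2 + H$ is false: the exponential weight is lost after one iteration. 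This is exactly the phenomenon the paper flags in its heuristic ($u\sim 1$, $u'\sim t^{-1/2}$, $v, v' \sim t^{-1}$): the mass term in (\ref{eq:flow2}) does \emph{not} yield exponential decay of $v$, because the nonlinear feedback from $u'$ dominates. A secondary but related omission is that your $X_v$ contains no control of $v'$ at all, yet $v'$ enters $R_u$, $S_u$, $R_v$, $S_v$ through the terms $(u+v)*v'$, $u*v'$, $v*(u'+v')$ in (\ref{eq:struc1})--(\ref{eq:struc3}); some scale-critical control of $v'$ is indispensable for estimating the divergence terms via Lemma \ref{Lem:CZ}.

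The paper's way around this is to measure $v$ and $v'$ not with an exponential weight but in integrated norms compatible with the $t^{-1}$ decay: $r^{-1}\Vert v\Vert_{L^1_{\mu_1}(P_r(x))}$, $r^{-1/2}\Vert v\Vert_{L^2_{\mu_2}(P_r(x))}$, $r^{-1/2}\Vert v'\Vert_{L^2(P_r(x))}$ and weighted local $L^{5/2}$, $L^5$ norms over $Q(x,t)$ (the $\beta_v$, $\gamma_v$ quantities), alongside energy-type bounds $r^{-1/2}\Vert u'\Vert_{L^2(P_r(x))}$ for $u$ obtained by integration by parts with cutoffs; the linear-in-$v$ term $1*v$ in $R_u$ is then absorbed through $\beta_v$ and $\gamma_v$ rather than through any pointwise decay of $v$, and the system of inequalities for $\alpha$, $\beta$, $\gamma$ closes with the exponent $3/2$ and a continuity argument. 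So your proposal needs its function spaces replaced along these lines before the bilinear closure can work; as written, the $X_v$-component of the fixed-point scheme fails.
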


\subsection{Introduction to the analytical problem}
Our analysis of the flow equations (\ref{eq:flow1}), (\ref{eq:flow2}) will be based on the following idea:
Looking at the linear part of (\ref{eq:flow1}), we expect $u$ to slowly converge towards a constant function.
The linear part of (\ref{eq:flow2}) suggests an exponential decay of $v$ which is however dominated by the slower decay of its nonlinear part.
More precisely, we expect the following behaviour for $t \to \infty$:
\begin{equation*} 
u \sim 1, \qquad u' \sim \frac1{t^{1/2}}, \qquad v,v' \sim \frac1t. 
\end{equation*}
This would imply that $S_u, S_v \sim \frac1{t^{1/2}}$ and $R_u, R_v \sim \frac1t$ which are in turn exactly the critical exponents to ensure the correct decays for $u, u', v, v'$.

In order to make this rigid, we will adapt the method of Koch and Lamm from \cite{KL} to our case.
It is recommended to understand first their proof for equations of the form $\dot f = f'' + S_f' + R_f$, where $S_f = f * f'$ and $R_f = f' * f'$.
We also remark, that the following proof also works if the input terms $I_u, I_v, J_u, J_v$ are zero and $D = \IR \times [0,T)$.
So in a second step, it might be helpful to go through the following proof, while having this simplified setting in mind.

Consider again the coordinate system $(x,t)$ and the domain $D$.
We define a function $r : D \to [0,\infty)$ which gives us a local scale:
\[ r(x,t) = \max \{ r \;\; : \;\; [x-2r,x+2r] \times [t-r^2,t] \subset D \} \]
If $D$ was $\IR \times [0,T)$, then $r(x,t)$ would be just $\sqrt{t}$.
Furthermore, using the notation $x^- = \min \{ x, 0 \}$, we define the parabolic domains
\begin{alignat*}{2}
 P_{r'}(x) &= [x-r',x+r'] \times [0, (r')^2 - (n-1)^{-1} x^-] \cap D \qquad &&\text{and} \\
 Q(x,t) &= [x-r, x+r] \times [t - r^2/2,t], \quad &&\text{where $r = r(x,t)$.}
\end{alignat*}

In order to simplify our notation, we will make use of the symbol $\lesssim$.
By $a \lesssim b$ we will mean $a \leq C b$ for some constant $C$ which only depends on the constants $\delta, \sigma, \mu_1, \mu_2$ ($\mu_1, \mu_2$ will be introduced in subsection \ref{subsec:Intronorms}).

\subsection{The heat kernel}
In the following, we will denote by $\zeta$ one of the two numbers $2(n-1)$ or $n$ depending on which component of $v$ we analyze (recall the $\twocoeff{2(n-1)}{n}$-coefficient in (\ref{eq:flow2})).
Then $\Phi, \ov \Phi \in C^\infty(\IR \times \IR_+)$ with
\[ \Phi (x,t) = (4 \pi t)^{-1/2} \exp \Big({- \frac{x^2}{4t} }\Big), \qquad \ov \Phi (x,t) = e^{- \zeta t} \Phi(x,t) \]
are the heat kernels of the linear part of (\ref{eq:flow1}) resp. (\ref{eq:flow2}).
Note that the ambiguity in the definition of $\ov\Phi$ will not create any problems, because it will only be important to us that $\ov\Phi$ has \emph{some} exponential decay.

We will need the following bounds for $\Phi$ and $\ov \Phi$:
\begin{Lemma} \label{Lem:hk}
We have the following estimates on $\Phi$ resp. $\ov{\Phi}$:
 \begin{enumerate}[(a)]
  \item For all $r > 0$
\[ \Vert \Phi \Vert_{L^{5/3}([-r,r] \times [0,r^2])} < C r^{4/5}, \quad \Vert \Phi' \Vert_{L^{5/4}([-r,r] \times [0,r^2])} < C r^{2/5}. \]
The same holds for $\ov\Phi$.
  \item Assume that $(x_0, t_0) \in D$ and set $r_0 = r(x_0,t_0)$.
Let $\ov r \geq \max \{ \sigma, r_0 \}$ and consider $(x,t) \in \big( \IR \times [0,t_0) \setminus [x_0 - \ov r, x_0 + \ov r] \times [t_0 - \ov r^2 /2, t_0] \big) \cap D$.
Then for some universal $c > 0$
\begin{multline*}
 \quad \Phi (x_0-x,t_0-t) < C \ov r^{-1} e^{-c|x_0-x|/\ov r}, \quad | \Phi'|(x_0-x,t_0-t) < C \ov r^{-2} e^{-c|x_0-x|/\ov r}, \\ |\Phi''|(x_0-x,t_0-t) < C \ov r^{-3} e^{-c|x_0-x|/\ov r}. 
\end{multline*}
The same holds for $\ov \Phi$, $\ov \Phi'$ resp. $\ov \Phi''$ even when we replace $\ov r^{-1}$, $\ov r^{-2}$ resp. $\ov r^{-3}$ by higher powers in $\ov r^{-1}$.
  \item If $(x,t) \in \IR \times [0, \infty) \setminus [-\sigma,\sigma] \times [0,\sigma^2]$, then
\[ |\ov \Phi|(x,t), \; |\ov \Phi'|(x,t), \; |\ov \Phi''|(x,t) < C e^{-c|x|-ct}. \]
 \end{enumerate}
\end{Lemma}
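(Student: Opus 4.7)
Each of (a), (b), (c) is a direct Gaussian estimate, and the two organising tools are parabolic rescaling and the AM--GM inequality $2c|\xi|/\bar r \leq \xi^2/(4\tau) + c^2\tau/\bar r^2$, equivalently $e^{-\xi^2/(4\tau)} \leq e^{c^2\tau/\bar r^2} e^{-c|\xi|/\bar r}$.

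For part (a), I would exploit the parabolic rescaling $(x,t) = (r\xi, r^2\tau)$, under which $\Phi(r\xi, r^2\tau) = r^{-1}\Phi(\xi, \tau)$ and $\Phi'(r\xi, r^2\tau) = r^{-2}\Phi'(\xi, \tau)$. This gives $\Vert \Phi \Vert_{L^p([-r,r] \times [0,r^2])} = r^{3/p - 1} \Vert \Phi \Vert_{L^p([-1,1] \times [0,1])}$ and analogously $\Vert \Phi' \Vert_{L^p} = r^{3/p - 2} \Vert \Phi' \Vert_{L^p}$ at scale one. Plugging $p = 5/3$ resp. $p = 5/4$ yields the exponents $r^{4/5}$ and $r^{2/5}$. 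The scale-one integrals are finite by the substitution $\xi = \sqrt{\tau}\, y$, which separates the integrand into a finite Gaussian moment in $y$ times a power of $\tau$, with $\tau$-exponent $-1/3$ for $\Phi$ and $-3/4$ for $\Phi'$, both strictly greater than $-1$ and hence integrable on $[0,1]$. For $\bar \Phi$, the factor $e^{-\zeta t} \leq 1$ gives the same bound.

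For part (b), writing $\xi = x_0 - x$ and $\tau = t_0 - t$, the hypothesis forces either $|\xi| \geq \bar r$ or $\tau \geq \bar r^2/2$. If $\tau \geq \bar r^2/2$, then $(4 \pi \tau)^{-1/2} \leq C \bar r^{-1}$, and the AM--GM bound above with $c$ small enough that the residual $c^2 \tau / \bar r^2$ is absorbed by the Gaussian itself yields $e^{-\xi^2/(4\tau)} \leq C e^{-c|\xi|/\bar r}$. If instead $|\xi| \geq \bar r$, I would use the pointwise split $\Phi = \bigl((4\pi\tau)^{-1/2} e^{-\xi^2/(8\tau)}\bigr)\cdot e^{-\xi^2/(8\tau)}$: the first factor is bounded by $C \vert \xi \vert^{-1} \leq C \bar r^{-1}$ (by maximising the one-dimensional Gaussian in $\tau$), and the second yields $e^{-c|\xi|/\bar r}$ via AM--GM. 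The derivative bounds follow from $\Phi' = -\tfrac{\xi}{2\tau} \Phi$ and $\Phi'' = \bigl(\tfrac{\xi^2}{4\tau^2} - \tfrac{1}{2\tau}\bigr) \Phi$, with each polynomial prefactor absorbed against half of the Gaussian decay, costing one extra power of $\bar r^{-1}$. For $\bar\Phi$, the factor $e^{-\zeta t}$ splits as $e^{-\zeta t/2}\cdot e^{-\zeta t/2}$; using $e^{-\zeta t/2}(\zeta t)^{-k} \leq C_k$ I can extract arbitrary powers of $\tau^{-1/2} \leq C \bar r^{-1}$ as stated, while the other half provides spare exponential decay in time. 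Part (c) is the same argument specialised to $\bar r = \sigma$, where the factor $e^{-\zeta t}$ directly produces the $e^{-ct}$ decay after a further AM--GM split.

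The argument is not conceptually hard; the only care point, which I expect to be the main fiddly step, is the AM--GM balancing in (b). One must make sure that the portion of the Gaussian (or of $e^{-\zeta t}$ in the $\bar\Phi$ case) used to extract exponential decay in $|\xi|/\bar r$ is properly matched against the prefactor estimate $(4\pi\tau)^{-1/2} \leq C\bar r^{-1}$, so that the resulting constants $C,c$ are universal uniformly in $\bar r \geq \max\{\sigma, r_0\}$.
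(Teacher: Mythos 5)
Parts (a) and (c) of your proposal are fine and coincide with what the paper leaves as an easy check (rescaling/Gaussian moments for (a), and splitting off $e^{-\zeta t}$ for (c)). The problem is part (b): your argument never uses the hypotheses $(x,t),(x_0,t_0)\in D$ and $\ov r \geq \max\{\sigma, r(x_0,t_0)\}$, and without them the claimed inequality is simply false for the free heat kernel, so no amount of AM--GM balancing can close the gap. Concretely, take $|x_0-x| = K\ov r$ and $t_0-t = K^2\ov r^2$ with $K$ large: then $\Phi(x_0-x,t_0-t) \sim (K\ov r)^{-1}$, while the asserted bound demands $C\ov r^{-1}e^{-cK}$, which fails polynomially versus exponentially. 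This is exactly the regime your proof does not control: in your case ``$\tau\geq \ov r^2/2$'' (and also in the case ``$|\xi|\geq\ov r$'' when $\tau$ is large) the AM--GM step leaves a residual factor $e^{c^2\tau/\ov r^2}$ which is unbounded, because $\tau$ ranges up to $t_0$ and is in no way tied to $\ov r^2$; your pointwise Gaussian manipulations are only valid in the subregime $\tau\leq\ov r^2/2$, $|\xi|\geq\ov r$, which is the easy half.

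The missing idea is geometric: the hypothesis $\ov r\geq\max\{\sigma,r_0\}$ forces $(x_0,t_0)$ to lie within parabolic distance $\sim\ov r$ of the parabolic boundary of $D$, so the vertical line through $x_0$ exits $D$ within time $\lesssim \ov r^2$ below $t_0$; hence $x_0^- \leq -(n-1)t_0 + A\ov r^2$, and since any admissible $(x,t)\in D$ satisfies $x\geq -(n-1)t$, one gets $|x_0-x| \geq (n-1)(t_0-t) - A\ov r^2$. In other words, in the regime $t< t_0-\ov r^2/2$ the time lag is dominated by the spatial separation (up to $A\ov r^2$), and only then does $\exp\bigl(-\tfrac{(x_0-x)^2}{8(t_0-t)}\bigr) \leq C\exp\bigl(-c\tfrac{|x_0-x|}{\ov r}\bigr)$ follow (the paper does this via the elementary bound on $f(y)=\tfrac{y}{y/\sigma+1}+\tfrac1y$). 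Your proof must incorporate this step; the prefactor estimates $(4\pi\tau)^{-1/2}\leq C\ov r^{-1}$ and the derivative identities are fine as you have them. A secondary, fixable issue: your extraction of ``higher powers of $\ov r^{-1}$'' for $\ov\Phi$ from $e^{-\zeta\tau}$ only works when $\tau\gtrsim\ov r^2$; for $\tau\leq\ov r^2/2$, $|\xi|\geq\ov r$ you must also use the spatial Gaussian (e.g.\ $e^{-\xi^2/(32\tau)}\leq C_k(\tau/\ov r^2)^k$ combined with $e^{-\zeta\tau/2}\leq C_k\tau^{-k}$), or simply deduce the $\ov\Phi$ statement of (b) from the $\Phi$ statement together with part (c), which is the route the paper takes.
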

\begin{proof}
Parts (a) and (c) can be checked easily.
The statement on $\ov\Phi$ in part (b) follows from from the statement on $\Phi$ and part (c).
So we only have to prove the estimates on $\Phi$, $\Phi'$ and $\Phi''$.

First observe that
\begin{alignat}{1}
 |\Phi'|(x_0-x,t_0-t) &\leq C (t_0-t)^{-1} \exp \Big({ -\frac{(x_0-x)^2}{8(t_0-t)} }\Big) \label{eq:Phiprime} \\  
|\Phi''|(x_0-x,t_0-t) &\leq C (t_0-t)^{-3/2} \exp \Big({ -\frac{(x_0-x)^2}{8(t_0-t)} }\Big). \label{eq:Phiprimeprime}
\end{alignat}

In case $t \in [t_0 - \ov r^2/2, t_0]$ and hence $|x_0 -x | \geq \ov r$, we have
\begin{multline*} \Phi(x_0-x,t_0-t) \leq C \ov r^{-1} \Big( \frac{t_0-t}{\ov r^2} \Big)^{-1/2} \exp \Big( - \frac{\ov r^2}{16(t_0-t)} \Big) \exp \Big({ - \frac{|x_0-x|^2}{16(t_0-t)} }\Big) \\
\leq C \ov r^{-1} \exp \Big({ - \frac{(x_0-x)^2}{16(t_0 - t)} }\Big) \leq C \ov r^{-1} \exp \Big({ - c \frac{|x_0 - x|}{\ov r} }\Big).
\end{multline*}
For some $c > 0$.
The estimates for $\Phi'$ and $\Phi''$ follow analogously by (\ref{eq:Phiprime}) and (\ref{eq:Phiprimeprime}).

Now assume $t < t_0 - \ov r^2/2$.
Note that by the definition of $r_0$ and by $\ov r \geq \max\{ \sigma, r_0 \}$, we can conclude that the vertical line through $(x_0, t_0)$ intersects the boundary of $D$ in a point $(x_0, t_1)$ such that $t_0 - t_1 \leq  \big( 1+ 2 (n-1)^{-1}\sigma^{-1} \big) \ov r^2 =: (n-1)^{-1} A \ov r^2$.
So 
\[ x_0^- \leq - (n-1) t_0 + A \ov r^2. \]
Hence
\begin{equation} 
\label{eq:x0minusxandA} |x_0 - x| \geq x^- -x_0^- \geq (n-1) (t_0 - t) - A \ov r^2.  
\end{equation}
Observe that
\[ \Phi (x_0 - x, t_0 - t) \leq C \ov r^{-1} \exp \Big( - \frac{(x_0 - x)^2}{8 (t_0-t) } \Big) \]
and analogously, for $\Phi'$ and $\Phi''$ (here we will get an $\ov r^{-2}$ resp. $\ov r^{-3}$ factor in front of the exponential function).
So it remains to show that
\begin{equation} \label{eq:expeq} 
\exp \Big( - \frac{(x_0-x)^2}{8(t_0-t)} \Big) \leq C \exp \Big( - c \frac{|x_0-x|}{\ov r} \Big)
\end{equation}
for some universal $c > 0$.
Since the function $f(y) = \frac{y}{y/\sigma + 1} + \frac{1}{y}$ is bounded from below by some constant $c > 0$ for positive $y$ and (using (\ref{eq:x0minusxandA}))
\[ f(A^{-1} \ov r^{-1} |x_0-x|) = \frac{|x_0-x|}{\sigma^{-1} |x_0-x| + A \ov r} + \frac{A \ov r}{|x_0 - x|} \leq \frac{\ov r |x_0-x|}{|x_0-x| + A \ov r^2} + \frac{A \ov r}{|x_0 - x|} , \]
we get
\[ c \frac{|x_0 - x|}{\ov r} \leq \frac{(x_0 - x)^2}{|x_0 - x| +  A \ov r^2} + A \leq \frac{(x_0 - x)^2}{t_0-t} + A. \]
Exponentiating this equation yields (\ref{eq:expeq}).
\end{proof}

\subsection{Representing $u$ and $v$ using the heat kernel} \label{subsec:representu}
Let $\widetilde\varphi : \IR \to [0,1]$ be a smooth function with $\widetilde\varphi \equiv 0$ on $(-\infty, 0]$ and $\widetilde\varphi \equiv 1$ on $[1,\infty)$.
Define $\varphi \in C^{\infty}(D)$ by
\[ \varphi (x,t) = \widetilde\varphi ( (x+ (n-1) t) / \sigma ) \widetilde\varphi (t/\sigma^2). \]
$\varphi$ is a cutoff function whose support lies in the interior of $D$ and which is constant outside $B_\sigma(\partial_p D)$.

Let $(x_0,t_0) \in \Int D$ and assume that $\varphi(x_0,t_0) = 1$.
Recall that $u$ and $v$ as well as their first derivatives were assumed to be bounded on $D$.
Hence, we can use integration by parts and (\ref{eq:flow1}) to compute that for $0 \leq t < t_0$
\begin{alignat*}{1}
 &\partial_t \int_{- (n-1) t}^\infty \varphi^2 \Phi(x_0-x,t_0-t) u (x,t) dx = \int_{-(n-1) t}^{\infty} 2\dot \varphi \varphi \Phi(x_0-x,t_0-t) u(x,t) dx \\
& \qquad + \int_{-(n-1) t}^\infty \varphi^2 \bigl[ - \Phi''(x_0-x,t_0-t) u(x,t) + \Phi(x_0-x,t_0-t) u''(x,t) \bigr] dx \\
& \qquad + \int_{-(n-1) t}^\infty \varphi^2 \Phi(x_0 - x, t_0 - t) (R_u + S'_u + I_u + J'_u)(x,t) dx \displaybreak[2] \\
& = \int_{-(n-1) t}^{\infty} 2\varphi \dot \varphi \Phi(x_0-x,t_0-t) u(x,t) dx \\ 
&\qquad - \int_{-(n-1) t}^{\infty} 2 \bigl[ (\varphi \varphi'' +  (\varphi')^2) u + 2 (\varphi \varphi') u' \bigr] \Phi(x_0 - x, t_0 - t) dx \\
&\qquad - \int_{-(n-1) t}^{\infty} 2\varphi \varphi' \Phi(x_0-x,t_0-t) (S_u + J_u) dx \\
&\qquad + \int_{-(n-1) t}^\infty \varphi^2 \bigl[ \Phi(x_0-x,t_0-t) (R_u + I_u) + \Phi'(x_0-x,t_0-t)(S_u + J_u) \bigr] dx.
\end{alignat*}
Integration over $t$ from $0$ to $t_0$ yields
\[ u(x_0,t_0) = u^*(x_0,t_0) + u^{**}(x_0,t_0), \]
where
\begin{multline} u^* (x_0,t_0) = \int_{D \cap \IR \times [0,t_0]} \varphi^2 \bigl[ \Phi(x_0-x,t_0-t) (R_u + I_u) \\ 
+ \Phi'(x_0-x,t_0-t)(S_u + J_u) \bigr] dx dt \label{eq:ustar}
\end{multline}
and
\begin{multline}
 u^{**}(x_0,t_0) = \int_{D \cap \IR \times [0,t_0]} 2\bigl[ ( \dot\varphi \varphi -  \varphi \varphi'' - (\varphi')^2 ) u - 2 \varphi \varphi' u' \\
 -  \varphi\varphi' (S_u+J_u)  \bigr] \Phi(x_0 - x, t_0 - t) dx dt  \label{eq:ustarstar}
\end{multline}
Analogously, we find
\[ v(x_0,t_0) = v^*(x_0,t_0) + v^{**}(x_0,t_0), \]
where
\begin{multline} v^* (x_0,t_0) = \int_{D \cap \IR \times [0,t_0]} \varphi^2 \bigl[ \ov \Phi(x_0-x,t_0-t) (R_v + I_v + u' * J_u) \\ 
+ \ov \Phi'(x_0-x,t_0-t)(S_v + J_v) \bigr] dx dt \label{eq:vstar}
\end{multline}
and
\begin{multline*} 
 v^{**}(x_0,t_0) = \int_{D \cap \IR \times [0,t_0]} 2\bigl[ ( \dot\varphi \varphi -  \varphi \varphi'' - (\varphi')^2 ) v - 2 \varphi \varphi' v' \\
 -  \varphi\varphi' (S_v+J_v) \bigr] \ov\Phi(x_0 - x, t_0 - t)
 dx dt. 
\end{multline*}

\subsection{Estimating $u^{**}$ and $v^{**}$} \label{subsec:ususs}
We have the following estimates on $u^{**}$ and $v^{**}$:
\begin{Lemma} \label{Lem:starstar}
Assume that $H < 0.1$.
Then for $(x_0,t_0) \in D$ we have
\begin{enumerate}[(a)]
 \item $| u^{**} |(x_0,t_0), \; | v^{**} |(x_0,t_0), \; | (u^{**})'|(x_0,t_0), \; | (v^{**})'|(x_0,t_0) \lesssim H$
 \item $|(u^{**})'|(x_0,t_0) \lesssim r^{-1}(x_0,t_0) H$
 \item $|v^{**}|(x_0,t_0), |(v^{**})'|(x_0,t_0) \lesssim r^{-1}(x_0,t_0) \exp ( -c r(x_0,t_0) ) H$ for some $c > 0$.
\end{enumerate}
\end{Lemma}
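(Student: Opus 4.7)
The plan exploits that in (\ref{eq:ustarstar}) and its $v^{**}$-analogue, every term of the integrand carries a factor of $\dot\varphi$, $\varphi'$, or $\varphi''$, so the support lies in the parabolic neighborhood $B_\sigma(\partial_p D)$. On this set the boundary hypothesis of Proposition \ref{Prop:cuspinv2} gives $|u|, |u'|, |v|, |v'| \leq H$; the algebraic form (\ref{eq:struc3}) gives $|S_u|, |S_v| \lesssim H^2 \leq H$; and the input bound, after absorbing $e^{x+(n-1+\delta)t}$, gives $|J_u|, |J_v| \leq H$. Since the cutoff derivatives are $\lesssim \sigma^{-2}$ (absorbed into $\lesssim$), each integrand is pointwise controlled by $H$ times the relevant kernel $\Phi$, $\Phi'$, $\ov\Phi$, or $\ov\Phi'$ evaluated at $(x_0-x, t_0-t)$. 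The support decomposes naturally into the initial-time slab $R_1 = D \cap \IR \times [0, \sigma^2]$ and the drift-adapted wedge $R_2 = \{0 < y := x+(n-1)t < \sigma\}$.

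For (a), I would bound the resulting kernel integrals separately on $R_1$ and $R_2$. Over $R_1$ this is routine from $\int_\IR \Phi(x_0-x, t_0-t) dx = 1$ and $\int_\IR |\Phi'(x_0-x, t_0-t)| dx \lesssim (t_0-t)^{-1/2}$ integrated in $t \in [0, \sigma^2]$. Over $R_2$ I would change to $(y, \tau)$ coordinates with $\tau = t_0-t$, turning $\Phi(x_0-x, t_0-t)$ into $\Phi(y_0-y-(n-1)\tau, \tau)$; the resulting drift-shifted integrals $\int_0^\infty \tau^{-1/2} e^{-A^2/(4\tau) - (n-1)^2 \tau / 4} d\tau$ are uniformly bounded by a standard Bessel-function identity, and the $\Phi'$-variant is handled similarly after first integrating in $y \in [0, \sigma]$ to absorb the logarithmic singularity at $\tau = 0$. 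The bounds for $v^{**}$ and $(v^{**})'$ are immediate from $|\ov\Phi| \leq |\Phi|$.

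For (b) and (c), I would argue by cases on $r_0 := r(x_0, t_0)$. When $r_0 \lesssim \sigma$, both target bounds exceed $H$ up to $\sigma$-dependent constants, so (a) suffices. When $r_0 \gtrsim \sigma$, the definition of $r_0$ forces $y_0 \geq 2 r_0 + (n-1) r_0^2$, and a short geometric check shows that the box $Q(x_0, t_0)$ lies entirely in the region $\{\varphi \equiv 1\}$, disjoint from the integrand's support. For (b), Lemma \ref{Lem:hk}(b) with $\bar r = r_0$ gives $|\Phi'|(x_0-x, t_0-t) \lesssim r_0^{-2} e^{-c|x_0-x|/r_0}$ on the support, and integration (using $\int_0^\infty e^{-c|y_0-y-(n-1)\tau|/r_0} d\tau \lesssim r_0$ on the wedge) yields the desired $r_0^{-1} H$. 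For (c), the disjointness from $Q(x_0, t_0)$ also places the support outside the small box where Lemma \ref{Lem:hk}(c) fails, so $|\ov\Phi|, |\ov\Phi'| \lesssim e^{-c|x_0-x|-c(t_0-t)}$ there. On $R_1$ the factor $e^{-c(t_0-t)} \leq e^{-c r_0^2}$ (from $t_0 \geq r_0^2$) dominates $r_0^{-1} e^{-c r_0}$, while on $R_2$ a split of $\int_0^{t_0} e^{-c|y_0-y-(n-1)\tau|-c\tau} d\tau$ at $\tau = (y_0-y)/(n-1)$ together with $y_0 \geq 2 r_0$ gives $\lesssim \sigma e^{-c' r_0}$; reducing the exponential constant absorbs any polynomial loss in $r_0$. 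The main subtle point is the geometric fact that $Q(x_0, t_0)$ is disjoint from $B_\sigma(\partial_p D)$ precisely when $r_0 \gtrsim \sigma$, which is what licenses Lemma \ref{Lem:hk}(b)--(c) cleanly on the integrand's support.
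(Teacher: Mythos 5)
Your argument is correct, but it is organized differently from the paper's. For part (a) the paper only estimates the kernel integrals directly when $(x_0,t_0)$ itself lies in $B_\sigma(\partial_p D)$ (where the inequality $|x_0-x|\gtrsim (n-1)(t_0-t)$ makes the Gaussian factor decay), and then extends the bound to all of $D$ by the maximum principle, using that $u^{**}$ and $v^{**}$ solve the homogeneous equations $\partial_t u^{**}-(u^{**})''=0$ and $\partial_t v^{**}-(v^{**})''+\zeta v^{**}=0$ off the cutoff region; you instead estimate the global formula (\ref{eq:ustarstar}) at every point, which forces the drift-adapted computation over the wedge $\{0<x+(n-1)t<\sigma\}$ — this does work (the completed-square exponent $e^{A(n-1)/2}e^{-|A|(n-1)/2}$ is uniformly bounded, and for the $\Phi'$-term the pointwise-in-$y$ $\tau$-integral only fails logarithmically as $y\to y_0$, which your integration in $y$ removes), but it is the one place where your sketch is breeziest and would need the explicit Laplace/Bessel estimates written out. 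For (b) and (c) the paper does not differentiate the global representation; it re-represents $(u^{**})'$ (resp.\ $v^{**}$, $(v^{**})'$) by a second, local Duhamel formula on the scale-$r_0$ box with a spatial cutoff $\eta_{r_0,x_0}$, feeds in the sup bound from part (a), and applies Lemma \ref{Lem:hk}(b), (c) to the resulting kernel factors; you differentiate (\ref{eq:ustarstar}) under the integral (legitimate, since the support $B_\sigma(\partial_p D)$ stays away from the singularity) and apply Lemma \ref{Lem:hk}(b), (c) directly on that support, using the correct geometric facts $t_0\ge r_0^2$ and $y_0\ge 2r_0+(n-1)r_0^2$ to see that $Q(x_0,t_0)$ misses $B_\sigma(\partial_p D)$ once $r_0\gtrsim\sigma$, with the small-$r_0$ case absorbed by (a). The trade-off: the paper's route avoids all explicit drift integrals and any differentiation of the global formula, at the price of a Phragm\'en--Lindel\"of-type maximum principle (harmless here because of the assumed boundedness) and of making (b), (c) depend on (a); your route is self-contained at the level of kernel estimates and makes (b), (c) independent of (a), but shifts the work into the wedge computations in (a). Both are valid proofs of Lemma \ref{Lem:starstar}.
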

\begin{proof}
Observe that since we have the bounds $|R_u|, |S_u|, |I_u|, |J_u| \lesssim H$ on $B_\sigma (\partial_p D)$, we can estimate for each $(x_0,t_0) \in B_{\sigma}(\partial_p D)$ using (\ref{eq:ustarstar}):
\begin{multline*}
  |u^{**}|(x_0,t_0) \leq C \int_{B_{\sigma}(\partial_p D) \cap \IR \times [0,t_0]} H \Phi(x_0-x,t_0-t) dx dt \displaybreak[1] \\
\leq C \int_{\IR \times [t_0-1,t_0]} H \Phi(x_0-x,t_0-t) dx dt +  \int_{\IR \times [0,\sigma^2]} CH \Phi(x_0 - x, t_0 - t) dx dt \displaybreak[1] \\
 + \int_{B_{\sigma}(\partial_p D) \cap \IR \times [\sigma^2,t_0-1]} C H \exp \Big({ - \frac{(x_0-x)^2}{4(t_0-t)} }\Big) dx dt \\
\leq C H + C H \int_{B_{\sigma}(\partial_p D) \cap \IR \times [\sigma^2,t_0-1]} \exp \Big({ - \frac{(n-1)^2 (t_0 - t)^2}{4(t_0-t)} }\Big) dx dt \leq C H.
\end{multline*}
The same is true for $|(u^{**})'|$, $|v^{**}|$ and $|(v^{**})'|$ (note that $\Phi'$ is integrable around the origin).
Since $u^{**}, v^{**}$ and their derivatives satisfy the linear heat equations on $D \setminus B_\sigma(\partial_p D)$:
\[ \partial_t u^{**} - (u^{**})'' = 0, \qquad
\partial_t v^{**} - (v^{**})'' + \zeta v^{**} = 0,\]
we conclude (a) by the maximum principle.

For (b) assume that $(x_0,t_0) \in \Int D$ and set $r_0 = r(x_0,t_0)$.
First note that we can assume that $[x_0 - 2 r_0, x_0 + 2 r_0] \times [t_0 - r_0^2/2, t_0] \subset D \setminus B_{\sigma}(\partial_p D)$, because otherwise, $r_0$ is smaller than some constant and we can simply use part (a).
Let $\eta : \IR \to [0,1]$ be a cutoff function which is $\equiv 1$ on $[-1,1]$ and $\equiv 0$ outside $[-2,2]$ and set $\eta_{r_0, x_0}(x) = \eta((x-x_0)/r_0)$.
Then by the same method as used in subsection \ref{subsec:representu}, we can compute that
\begin{alignat*}{1}
 (u^{**})'(x_0,t_0) &= \int_{t_0 - r_0^2/2}^{t_0} \int_{x_0-2r_0}^{x_0 + 2r_0} \bigl[ 2\bigl( \eta_{r_0,x_0}\eta_{r_0,x_0}'' + (\eta_{r_0,x_0}')^2 \bigr) \Phi'(x_0-x,t_0-t) \\ &\qquad\qquad\quad - 4  \eta_{r_0,x_0} \eta_{r_0,x_0}' \Phi''(x_0-x,t_0-t) \bigr] u^{**}(x,t) dxdt \\
&\quad + \int_{x_0 - 2 r_0}^{x_0 + 2 r_0} \eta_{r_0,x_0}^2 \Phi'(x_0-x, \sfrac{r_0^2}2) u^{**} (x,t_0 - \sfrac{r_0^2}2) dx.
\end{alignat*}
So by Lemma \ref{Lem:hk} (b) its absolute value is bounded by
\[  \int_{t_0 - r_0^2/2}^{t_0} \int_{x_0 - r_0}^{x_0 + r_0} C H r_0^{-4} dxdt + \int_{x_0 - r_0}^{x_0 + r_0} C H r_0^{-2} dx \leq CH r_0^{-1}. \]

Part (c) can be proved in the same way, except that we now have to employ Lemma \ref{Lem:hk} (c).
\end{proof}

\subsection{The $L^p_{\mu}$-norm} \label{subsec:Lpnorm}
We will need a norm which is slightly stronger than the $L^p$-norm.
Assume $\mu > 0$.
For any $r > 0$, $x \in \IR$ and function $f \in L^{p+p\mu}_{loc}(D)$ we set
\[ \Vert f \Vert_{L^p_{\mu}(P_r(x))} = \left( \int_{P_r(x)} \Vert f \Vert_{L^{p+p\mu}([x'-\sigma,x'+\sigma] \times [t'-\sigma^2/2,t'+\sigma^2/2] \cap P_r(x))}^p d x' d t' \right)^{1/p}, \]
where the norm under the integral sign should be understood as the norm of the restriction of $f$ to the indicated parabolic domain.
It is easy to see that the $L^p_{\mu}$-norm is stronger than the $L^p$-norm, i.e. for $r \geq \sigma$.
\[ \Vert f \Vert_{L^p(P_r(x))} \lesssim \Vert f \Vert_{L^p_{\mu} (P_r(x))}. \]

\subsection{Introduction of the norms} \label{subsec:Intronorms}
Fix some arbitrary constants $\mu_1, \mu_2 > 0$ such that $\frac{1}{1+\mu_1} = \frac1{2+2\mu_2} + \frac12$ and $\mu_1, \mu_2 < \frac14$.
Assume that $\sigma^2 < T' \leq T$.
We are going to control the following norms:
\begin{alignat*}{1}
\alpha_{u,T'} &= \Vert u \Vert_{L^\infty (D \cap \IR \times [0,T'))} \\
\alpha_{v,T'} &= \Vert v \Vert_{L^\infty (D \cap \IR \times [0,T'))} \displaybreak[1] \\
\beta_{u,T'} &= \sup_{r \geq \sigma} \sup_x r^{-1/2} \Vert u' \Vert_{L^2(P_r(x) \cap \IR \times [0,T'))} \\
\beta_{v,T'} &= \sup_{r \geq \sigma} \sup_x r^{-1} \left( \Vert v \Vert_{L^1_{\mu_1}(P_r(x) \cap \IR \times [0,T'))} + \Vert v' \Vert_{L^1_{\mu_1}(P_r(x) \cap \IR \times [0,T'))} \right) \\
&\qquad + \sup_{r \geq \sigma} \sup_x r^{-1/2} \left( \Vert v \Vert_{L^2_{\mu_2} (P_r(x) \cap \IR \times [0,T'))} + \Vert v' \Vert_{L^2(P_r(x) \cap \IR \times [0,T'))} \right) \displaybreak[1] \\
\gamma_{u,T'}  &= \sup_{\stackrel{\scriptstyle (x,t) \in D}{0 \leq t < T'}} r^{2/5}(x,t) \Vert u' \Vert_{L^5(Q(x,t))} \\
\gamma_{v,T'}  &= \sup_{\stackrel{\scriptstyle (x,t) \in D}{0 \leq t < T'}} r^{2/5}(x,t) \left( \Vert v \Vert_{L^{5}(Q(x,t))} + \Vert v' \Vert_{L^{5}(Q(x,t))} \right) \\
& \qquad + \sup_{\stackrel{\scriptstyle (x,t) \in D}{0 \leq t < T'}} r^{4/5}(x,t) \left( \Vert v \Vert_{L^{5/2}(Q(x,t))} + \Vert v' \Vert_{L^{5/2}(Q(x,t))} \right)
\end{alignat*}
Observe that by the derivative bounds on $u$ and $v$, these norms vary continuously in $T'$.

To simplify notation, we set $\alpha_{T'} = \alpha_{u, T'} + \alpha_{v, T'}$, $\beta_{T'} = \beta_{u, T'} + \beta_{v, T'}$ and $\gamma_{T'} = \gamma_{u, T'} + \gamma_{v, T'}$.
Moreover, we will most often leave out the $T'$ in the index.

\subsection{The estimates}
In this subsection, we will derive inequalities for these norms that are independent of $T'$ (see Lemmas \ref{Lem:alpha}, \ref{Lem:beta} and \ref{Lem:gamma}).
Since, we can always restrict the solutions $u, v$ to the time interval $[0,T')$, we can assume without loss of generality that $T' = T > \sigma^2$.
We will make use of the following identities:

\begin{Lemma} \label{Lem:RSbound}
Assume that $\alpha_T < 0.1$.
Then the quantities $R_u, R_v, S_u, S_v$ and $I_u, I_v, J_u, J_v$ satisfy the following bounds:
\begin{enumerate}[(a)]
 \item  If $(x,t) \in D$, $t < T$ and $r = r(x,t) \geq \sigma$, then
\begin{alignat*}{1}
 r^{4/5} \Vert R_u \Vert_{L^{5/2}(Q(x,t))} &\lesssim \gamma^2 + \gamma_v \displaybreak[1] \\ 
 r^{4/5} \Vert R_v \Vert_{L^{5/2}(Q(x,t))} &\lesssim \alpha^2 + \gamma^2 \displaybreak[2] \\ 
 r^{2/5} \Vert S_u \Vert_{L^5(Q(x,t))}, \;\; r^{2/5} \Vert S_v \Vert_{L^5(Q(x,t))} &\lesssim  \alpha^2 + \gamma^2 \displaybreak[3] \\
  r^{4/5} \Vert S_v \Vert_{L^{5/2}(Q(x,t))} &\lesssim  \alpha^2 + \gamma^2 \displaybreak[3] \\
 r^{4/5} \Vert I_u \Vert_{L^{5/2}(Q(x,t))}, \;\; r^{4/5} \Vert I_v \Vert_{L^{5/2}(Q(x,t))} &\lesssim H \\
 r^{2/5} \Vert J_u \Vert_{L^5 (Q(x,t))}, \;\; r^{2/5} \Vert J_v \Vert_{L^5 (Q(x,t))} &\lesssim H \\
 r^{4/5} \Vert J_v \Vert_{L^{5/2} (Q(x,t))} &\lesssim H \\
 r^{4/5} \Vert u' * J_u \Vert_{L^{5/2}(Q(x,t))} & \lesssim \gamma_u H
\end{alignat*}
\item If $x \in \IR$ and $r \geq \sigma$, then
\begin{alignat*}{1}
 r^{-1} \Vert R_u \Vert_{L^1(P_r(x))} & \lesssim \alpha^2 + \beta^2 + \beta_v \displaybreak[1] \\
 r^{-1} \Vert R_v \Vert_{L^1(P_r(x))} & \lesssim \alpha^2 + \beta^2 \displaybreak[2] \\
 r^{-1/2} \Vert S_u \Vert_{L^2(P_r(x))}, \;\; r^{-1/2} \Vert S_v \Vert_{L^2(P_r(x))} &\lesssim \alpha^2 + \beta^2 \displaybreak[1] \\
 r^{-1} \Vert S_u \Vert_{L^1(P_r(x))}, \;\; r^{-1} \Vert S_v \Vert_{L^1_{\mu_1}(P_r(x))} &\lesssim \alpha^2 + \beta^2 \displaybreak[3] \\
 r^{-1} \Vert I_u \Vert_{L^1(P_r(x))}, \;\; r^{-1} \Vert I_v \Vert_{L^1(P_r(x))} &\lesssim H \\
 r^{-1/2} \Vert J_u \Vert_{L^2(P_r(x))}, \;\; r^{-1/2} \Vert J_v \Vert_{L^2(P_r(x))} &\lesssim H \\
 r^{-1} \Vert J_u \Vert_{L^1(P_r(x))}, \;\; r^{-1} \Vert J_v \Vert_{L^1_{\mu_1}(P_r(x))} &\lesssim H \\
 r^{-1} \Vert u' * J_u \Vert_{L^1(P_r(x))} & \lesssim \beta_u H
\end{alignat*}
\end{enumerate}
\end{Lemma}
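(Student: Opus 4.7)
Both parts reduce to term-by-term applications of H\"older's inequality to the explicit algebraic structure of $R_u, R_v, S_u, S_v$ in (\ref{eq:struc1})--(\ref{eq:struc3}), combined with the definitions of the norms $\alpha, \beta, \gamma$ and the assumption $\alpha < 0.1$. The basic tactic for a product $f \cdot g$ is either to put one factor in $L^\infty$ (bounded by $\alpha$) and the other in a norm directly read off the definition, or to split symmetrically via H\"older.

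For part (a), the key structural fact is that $R_u$ contains only a single non-quadratic piece $1 * v$, which yields $r^{4/5} \|v\|_{L^{5/2}(Q)} \lesssim \gamma_v$ directly. All other terms of $R_u, R_v, S_u, S_v$ are genuinely bilinear: for example $(u'+v')*(u'+v')$ gives $r^{4/5} \|(u'+v')^2\|_{L^{5/2}(Q)} \leq (r^{2/5}\|u'+v'\|_{L^5(Q)})^2 \lesssim \gamma^2$; the mixed piece $u * v'$ contributes $\alpha \gamma_v$ (absorbed into $\gamma^2 + \gamma_v$ or $\alpha^2 + \gamma^2$ by AM--GM and $\alpha < 0.1$); and $(u+v)*v$ in $R_v$ gives the $\alpha^2$ contribution. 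For the input terms, I will use the hypothesis $|I|,|J| \lesssim H e^{-x - (n-1+\delta)t}$ together with the crucial geometric observation that, by the definition of $r(x_0,t_0)$, the containment $[x_0-2r, x_0+2r] \times [t_0-r^2, t_0] \subset D$ forces $x_0 + (n-1)t_0 \geq 2r + (n-1)r^2$. Consequently, on $Q(x_0,t_0)$ the weight is bounded by $e^{-r - (n-1)r^2/2 - \delta t_0/2}$, which gives $r^{4/5}\|I\|_{L^{5/2}(Q)} \lesssim H \cdot r^2 e^{-(n-1)r^2/2} \lesssim H$, and similarly in $L^5$. The bound on $u' * J_u$ follows from an $L^5 \times L^5$ H\"older split.

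Part (b) proceeds along the same lines on $P_r(x)$. For products paired with the $L^1_{\mu_1}$ norm I will use on each $\sigma$-cube $B$ the H\"older inequality $\|fg\|_{L^{1+\mu_1}(B)} \leq \|f\|_{L^{2+2\mu_2}(B)} \|g\|_{L^2(B)}$ (permitted by $\tfrac{1}{1+\mu_1} = \tfrac{1}{2+2\mu_2} + \tfrac12$), followed by Cauchy--Schwarz on the outer integration over $P_r(x)$; this converts $\|v(u'+v')\|_{L^1_{\mu_1}(P_r)}$ into $\|v\|_{L^2_{\mu_2}(P_r)} \|u'+v'\|_{L^2(P_r)} \lesssim r\beta^2$, whose ratio with $r$ is $\beta^2$ as required. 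Terms carrying an $L^\infty$ factor are absorbed into $\alpha$. For the input integrals, a direct computation gives $\int_{P_r(x)} e^{-x' - (n-1+\delta)t'}\,dx'\,dt' \lesssim 1$ uniformly in $r$ and $x$: for $x'\geq 0$ the $t'$-integral contributes $O(1/(n-1+\delta))$ and the $x'$-integral is $O(1)$, while for $x'<0$ the constraint $t' \geq |x'|/(n-1)$ provides the extra factor $e^{(\delta/(n-1))x'}$ which is integrable on $(-\infty,0)$. Using $r \geq \sigma$ then yields the stated $H$-bounds.

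The main obstacle is bookkeeping: many terms arise and each must be matched carefully to one of the four norms, noting whether it is genuinely bilinear or only linear. The one analytically non-trivial point is the input-term bound in part (a) at large $r$, which succeeds only because the Gaussian $e^{-cr^2}$ coming from the geometry of $D$ dominates every polynomial factor in $r$ appearing from $|Q|^{2/5}$ and the prefactor $r^{4/5}$.
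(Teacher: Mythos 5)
Your proposal is correct and follows essentially the same route as the paper's (very terse) proof: term-by-term H\"older/Cauchy--Schwarz against the algebraic structure (\ref{eq:struc1})--(\ref{eq:struc3}), the exponent identity $\frac{1}{1+\mu_1}=\frac{1}{2+2\mu_2}+\frac12$ for the $L^1_{\mu_1}$-bound on $S_v$ (the one point the paper singles out), and the exponential-decay hypothesis for the input terms. Your explicit geometric estimate $x_0+(n-1)t_0\geq 2r+(n-1)r^2$ forcing the weight $e^{-x-(n-1+\delta)t}\lesssim e^{-r-(n-1)r^2/2}$ on $Q(x_0,t_0)$, and the uniform integrability of the weight over $D$ for part (b), correctly supply the details the paper dismisses as immediate.
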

\begin{proof}
 The bounds on $R_u, R_v, S_u, S_v$ follow from their algebraic structure (see (\ref{eq:struc1}), (\ref{eq:struc2}), (\ref{eq:struc3})) using Cauchy-Schwarz.
Note that in order to bound the term $r^{-1} \Vert S_v \Vert_{L^1_{\mu_1}(P_r(x))}$, we have to make use of $\frac{1}{1+\mu_1} = \frac1{2+2\mu_2} + \frac12$.

The bounds on the terms $I_u, I_v, J_u, J_v$ follow immediately from the hypothesis of Proposition \ref{Prop:cuspinv2} which actually asserts a stronger exponential decay with respect to a stronger $L^p$-norm. 
\end{proof}

\begin{Lemma} \label{Lem:alpha}
If $\alpha < 0.1$, then we have
\begin{alignat*}{1}
\alpha_u &\lesssim \alpha^2 + \beta^2 + \gamma^2 + \beta_v + \gamma_v + H\\
\alpha_v &\lesssim \alpha^2 + \beta^2 + \gamma^2 + (\beta_u + \gamma_u)H + H
\end{alignat*}
\end{Lemma}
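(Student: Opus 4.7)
The plan is to use the decomposition $u = u^* + u^{**}$, $v = v^* + v^{**}$ developed in subsection \ref{subsec:representu}, so that Lemma \ref{Lem:starstar}(a) immediately supplies the uniform bound $|u^{**}|, |v^{**}| \lesssim H$, accounting for the $H$-terms on the right-hand side. For $(x_0,t_0) \in B_{9\sigma}(\partial_p D)$ the hypothesis on $H$ together with Lemma \ref{Lem:starstar} already yields $|u|, |v| \lesssim H$, so from now on I assume $\varphi(x_0,t_0) = 1$, write $r_0 := r(x_0,t_0)$, and split the integrations in (\ref{eq:ustar}) and (\ref{eq:vstar}) into the near part $Q(x_0,t_0)$ and its complement $F$ inside $D \cap \IR \times [0,t_0]$.

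On the near part I would apply H\"older's inequality together with the heat-kernel bounds of Lemma \ref{Lem:hk}(a), rescaled to $r = \max(r_0,\sigma)$: the pairings $\Phi \cdot (R_u + I_u)$ and $\Phi' \cdot (S_u + J_u)$ are dualised via $(L^{5/3}, L^{5/2})$ and $(L^{5/4}, L^5)$ respectively, and the resulting kernel norms $\lesssim r_0^{4/5}$ and $\lesssim r_0^{2/5}$ exactly cancel the inverse powers $r_0^{-4/5}, r_0^{-2/5}$ appearing in the $\gamma$-bounds of Lemma \ref{Lem:RSbound}(a). This produces a near-part contribution of $\lesssim \alpha^2 + \gamma^2 + \gamma_v + H$ for $u^*$ and $\lesssim \alpha^2 + \gamma^2 + \gamma_u H + H$ for $v^*$, using that the $\bar\Phi$-bounds in Lemma \ref{Lem:hk}(a) have the same form as those for $\Phi$.

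On the far part I would combine the pointwise bound of Lemma \ref{Lem:hk}(b) with $\bar r = \max(r_0,\sigma)$, giving $|\Phi|(x_0-x,t_0-t) \lesssim \bar r^{-1} e^{-c|x_0-x|/\bar r}$ and the analogue for $\Phi'$, together with a dyadic decomposition of $F$ in the $x$-variable into annular strips $\{2^{k-1} \bar r \leq |x_0-x| < 2^k \bar r\}$ for $k \geq 1$ and a temporal remainder $\{|x_0-x| < \bar r,\, t < t_0 - \bar r^2/2\}$. On each annular strip the factor $e^{-c\cdot 2^{k-1}}$ yields a convergent geometric series when $\Phi$ (resp.\ $\Phi'$) is paired in $L^\infty$-$L^1$ (resp.\ $L^\infty$-$L^2$) against the nonlinearities; the required $L^1$- and $L^2$-bounds are supplied by Lemma \ref{Lem:RSbound}(b) on the column $P_{2^k \bar r}(x_0)$, giving $\lesssim \alpha^2 + \beta^2 + \beta_v$ for $u^*$ and $\lesssim \alpha^2 + \beta^2 + \beta_u H$ for $v^*$. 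The temporal remainder is contained in $P_\varrho(x_0)$ for some $\varrho \lesssim \bar r$ forced by the geometry $x \geq -(n-1)t$ of $D$, and hence obeys the same bound. For $v^*$ the extra factor $e^{-\zeta t}$ built into $\bar\Phi$ (Lemma \ref{Lem:hk}(c)) keeps the time integral finite all the way down to $t = 0$, so neither $\beta_v$ nor $\gamma_v$ needs to appear on the right-hand side.

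The main obstacle I anticipate is the bookkeeping required to match the dyadic shells with the $P_r(x)$-columns on which the $\beta$-norms are defined, since those columns run from $t = 0$ upward rather than being parabolic boxes centred at $(x_0,t_0)$. Exploiting the supremum in $x$ built into the definition of $\beta$ lets one cover each annular shell by boundedly many translated columns of scale $2^k \bar r \geq \sigma$, while the constraint $r_0^2 \lesssim t_0 + (n-1)^{-1} x_0^-$ implicit in the definition of $r(x_0,t_0)$ keeps the temporal extent of the relevant columns comparable to $\bar r^2$. Summing the near and far contributions and combining with Lemma \ref{Lem:starstar} then yields the two asserted inequalities for $\alpha_u$ and $\alpha_v$.
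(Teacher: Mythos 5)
Your proposal follows the paper's proof in all essentials: the same split $u=u^*+u^{**}$, $v=v^*+v^{**}$ with Lemma \ref{Lem:starstar}(a) handling the double-starred parts, the same treatment of the near box $Q(x_0,t_0)$ by H\"older with the $(L^{5/3},L^{5/2})$ and $(L^{5/4},L^5)$ pairings from Lemma \ref{Lem:hk}(a) and Lemma \ref{Lem:RSbound}(a), and the same far-field strategy of pointwise kernel decay from Lemma \ref{Lem:hk}(b) combined with the column bounds of Lemma \ref{Lem:RSbound}(b) and a summable series. The near-region contributions you list are exactly those of the paper.

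The genuine gap is in your far-field covering, precisely at the point you flag. The inequality you invoke goes the wrong way: what is true (and useful) is $t_0+(n-1)^{-1}x_0^-\lesssim r_0^2$, and this only controls the height of $D$ above the slanted line $x=-(n-1)t$, i.e.\ the strips on the side of $x_0$ toward that boundary. On the opposite side, when $x_0$ lies deep along the slanted boundary (so $t_0\gg r_0^2$), the dyadic strips $\{2^{k-1}\bar r\le x-x_0<2^k\bar r\}$ meet $D$ all the way down to small $t$, so their time extent is of order $t_0$, not $(2^k\bar r)^2$. A column $P_{2^k\bar r}(y)$ whose width covers such a strip must have center $y\ge x_0$, hence top at most $(2^k\bar r)^2-(n-1)^{-1}x_0^-$, which falls short of $t_0$ by up to $\sim(n-1)^{-1}2^k r_0$ whenever $r_0$ is below a dimensional constant; no collection of horizontal translates of that fixed scale covers the strip, since columns cannot be translated in time. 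This is exactly why the paper covers $\Omega_+$ with \emph{growing} scales $r_k=(k+1)r_0$ at quadratically spaced centers $y_k=x_0+k^2r_0$ and verifies $b^2r_k^2-(n-1)^{-1}z_k^-\ge t_0$; the decay $e^{-ck}$ then beats the linear growth of the Lemma \ref{Lem:RSbound}(b) bounds. Your scheme is repairable (enlarge the covering columns on that side to scale comparable to the square root of the strip's time extent; the factor $e^{-c2^k}$ absorbs the extra powers of $2^k$), but as written the covering claim is false. Two smaller points: the pairing of $\Phi'$ against $S_u+J_u$ in the far field should be $L^\infty$--$L^1$, using $r^{-1}\Vert S_u\Vert_{L^1(P_r)}\lesssim\alpha^2+\beta^2$ from Lemma \ref{Lem:RSbound}(b) (the extra factor $r_0^{-1}$ is harmless since $r_0\gtrsim\sigma$), as an ``$L^\infty$--$L^2$ pairing'' is not by itself an inequality; and the absence of $\beta_v,\gamma_v$ from the $\alpha_v$-estimate comes from the algebraic structure of $R_v$ (it has no linear $1*v$ term, which is what Lemma \ref{Lem:RSbound} records), not from the factor $e^{-\zeta t}$ in $\ov{\Phi}$, which plays no role in this lemma.
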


\begin{figure}[t]
\caption{The domains $P_{br_0}(x_0 - 2kbr_0)$ and $P_{br_k}(z_k)$ cover the domain $D \cap \IR \times [0,t_0]$ in the proof of Lemma \ref{Lem:alpha}}
\label{fig:alpha}
\begin{center}
\begin{picture}(0,0)%
\hspace{2mm}\includegraphics[width=14cm]{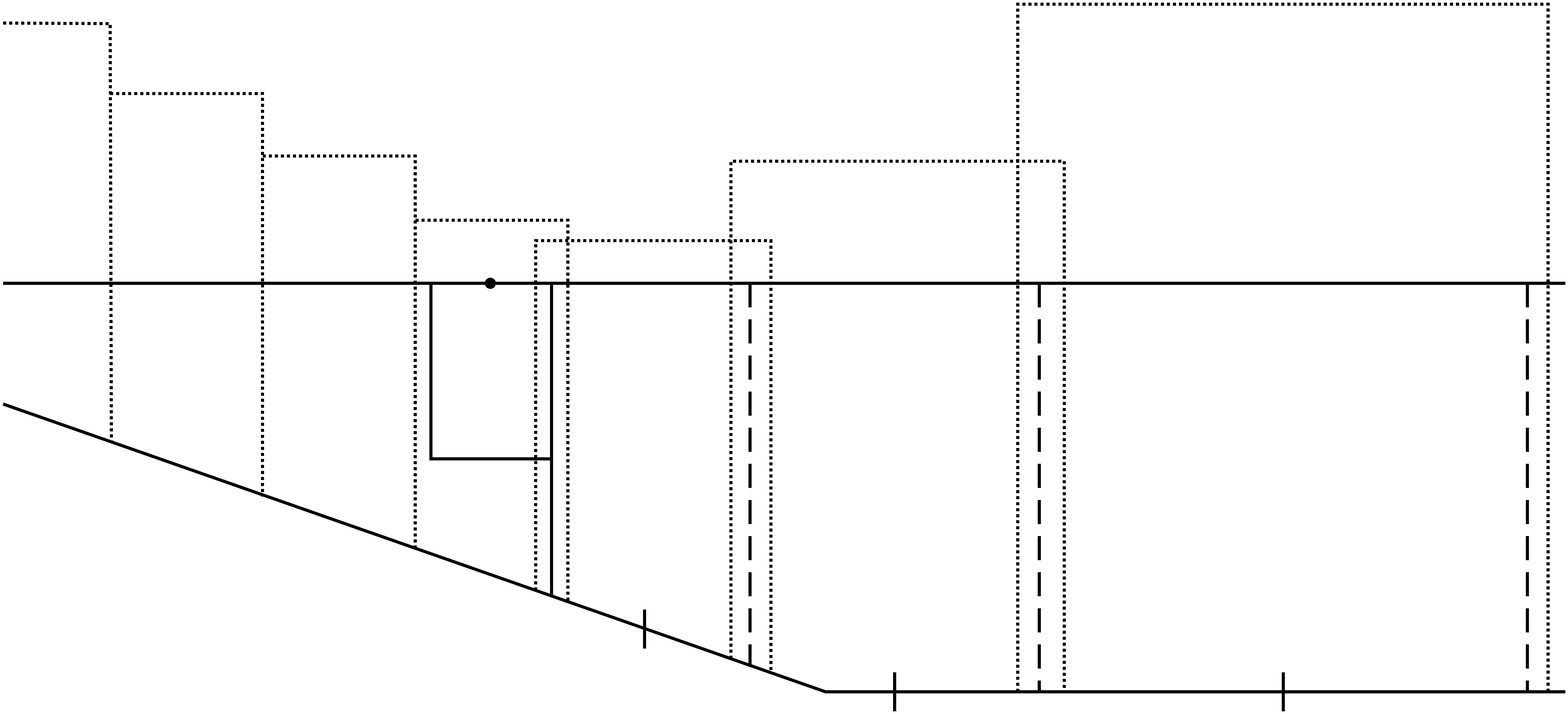}%
\end{picture}%
\setlength{\unitlength}{2863sp}%
\begingroup\makeatletter\ifx\SetFigFont\undefined%
\gdef\SetFigFont#1#2#3#4#5{%
  \reset@font\fontsize{#1}{#2pt}%
  \fontfamily{#3}\fontseries{#4}\fontshape{#5}%
  \selectfont}%
\fi\endgroup%
\begin{picture}(11195,4100)(518,-5050)
\put(1000,-4210){\makebox(0,0)[lb]{\smash{{\SetFigFont{12}{14.4}{\familydefault}{\mddefault}{\updefault}$x= -(n-1) t$}}}}
\put(3360,-3300){\makebox(0,0)[lb]{\smash{{\SetFigFont{12}{14.4}{\familydefault}{\mddefault}{\updefault}$\Omega_0$}}}}
\put(2850,-3000){\makebox(0,0)[lb]{\smash{{\SetFigFont{12}{14.4}{\familydefault}{\mddefault}{\updefault}$\frac{r_0^2}2$}}}}
\put(3100,-2450){\makebox(0,0)[lb]{\smash{{\SetFigFont{12}{14.4}{\familydefault}{\mddefault}{\updefault}$(x_0,t_0)$}}}}
\put(1900,-3300){\makebox(0,0)[lb]{\smash{{\SetFigFont{12}{14.4}{\familydefault}{\mddefault}{\updefault}$\Omega_-$}}}}
\put(5700,-3300){\makebox(0,0)[lb]{\smash{{\SetFigFont{12}{14.4}{\familydefault}{\mddefault}{\updefault}$\Omega_+$}}}}
\put(1640,-1250){\makebox(0,0)[lb]{\smash{{\SetFigFont{12}{14.4}{\familydefault}{\mddefault}{\updefault}$P_{br_0}(x_0-2kbr_0)$}}}}
\put(4000,-4000){\makebox(0,0)[lb]{\smash{{\SetFigFont{12}{14.4}{\familydefault}{\mddefault}{\updefault}$P_{br_1}(z_1)$}}}}
\put(5400,-4000){\makebox(0,0)[lb]{\smash{{\SetFigFont{12}{14.4}{\familydefault}{\mddefault}{\updefault}$P_{br_2}(z_2)$}}}}
\put(7600,-4000){\makebox(0,0)[lb]{\smash{{\SetFigFont{12}{14.4}{\familydefault}{\mddefault}{\updefault}$P_{br_3}(z_3)$}}}}
\put(4310,-4860){\makebox(0,0)[lb]{\smash{{\SetFigFont{12}{14.4}{\familydefault}{\mddefault}{\updefault}$z_1$}}}}
\put(5750,-5230){\makebox(0,0)[lb]{\smash{{\SetFigFont{12}{14.4}{\familydefault}{\mddefault}{\updefault}$z_2$}}}}
\put(8030,-5230){\makebox(0,0)[lb]{\smash{{\SetFigFont{12}{14.4}{\familydefault}{\mddefault}{\updefault}$z_3$}}}}
\put(3710,-4660){\makebox(0,0)[lb]{\smash{{\SetFigFont{12}{14.4}{\familydefault}{\mddefault}{\updefault}$y_1$}}}}
\put(4950,-5050){\makebox(0,0)[lb]{\smash{{\SetFigFont{12}{14.4}{\familydefault}{\mddefault}{\updefault}$y_2$}}}}
\put(6630,-5230){\makebox(0,0)[lb]{\smash{{\SetFigFont{12}{14.4}{\familydefault}{\mddefault}{\updefault}$y_3$}}}}
\put(9430,-5230){\makebox(0,0)[lb]{\smash{{\SetFigFont{12}{14.4}{\familydefault}{\mddefault}{\updefault}$y_4$}}}}

\put(3350,-3750){\makebox(0,0)[lb]{\smash{{\SetFigFont{12}{14.4}{\familydefault}{\mddefault}{\updefault}$\scriptstyle 2 r_0$}}}}
\end{picture}%
\end{center}
\vspace{0.5cm}
\end{figure}

\begin{proof}
Let $(x_0, t_0) \in D$ and set $r_0 = r_0(x_0,t_0)$.
If $r_0 < 4\sigma$ (and if $\sigma$ is sufficiently small), then $(x_0,t_0) \in B_{9\sigma}(\partial_p D)$ and the hypothesis of Proposition \ref{Prop:cuspinv2} already gives us $|u|(x_0,t_0), |v|(x_0,t_0) \lesssim H$.
So assume in the following $r_0 \geq 4\sigma$ and hence $(x_0, t_0) \not\in B_{\sigma}(\partial_p D)$.

We first establish the bound on $\alpha_u$.
As in subsection \ref{subsec:ususs}, decompose $u = u^* + u^{**}$.
Lemma \ref{Lem:starstar} (a) gives us $|u^{**}|(x_0,t_0) \lesssim H$.
So we just have to bound $|u^*|(x_0,t_0)$.
Recall from (\ref{eq:ustar}) that
\begin{multline*} u^* (x_0,t_0) = \int_{D \cap \IR \times [0,t_0]} \varphi^2 \bigl[ \Phi(x_0-x,t_0-t) (R_u + I_u) \\ 
+ \Phi'(x_0-x,t_0-t)(S_u + J_u) \bigr] dx dt
\end{multline*}
We split the domain of integration $D \cap \IR \times [0,t_0]$ into disjoint subsets $\Omega_-, \Omega_0, \Omega_+$ where
\begin{alignat*}{1}
 \Omega_0 &= Q(x_0,t_0) = [x_0 - r_0, x_0 + r_0] \times [t_0 - r_0^2/2, t_0], \\
 \Omega_- &= (-\infty,x_0+r_0] \times [0,t_0] \cap D \setminus \Omega_0, \\
 \Omega_+ &= (x_0+r_0, \infty) \times [0,t_0] \cap D
\end{alignat*}
(see also Figure \ref{fig:alpha}) and estimate the integral over each of these subdomains.
On $\Omega_0$, we use H\"older's inequality, Lemma \ref{Lem:hk} (a) and Lemma \ref{Lem:RSbound} (a) to find
\begin{alignat*}{1} 
\left| \int_{\Omega_0} \% \right| &\lesssim \Vert \Phi(x_0 - x,t_0 - t) \Vert_{L^{5/3}(\Omega_0)} \Vert R_u + I_u \Vert_{L^{5/2}(\Omega_0)} \\
& \qquad + \Vert \Phi'(x_0 - x, t_0 -t) \Vert_{L^{5/4} (\Omega_0)} \Vert S_u + J_u \Vert_{L^5(\Omega_0)} \displaybreak[1] \\
 &\lesssim r_0^{4/5} \bigl( \Vert R_u \Vert_{L^{5/2}(\Omega_0)} + \Vert I_u \Vert_{L^{5/2}(\Omega_0)} \bigr) + r_0^{2/5} \bigl( \Vert S_u \Vert_{L^5(\Omega_0)} + \Vert J_u \Vert_{L^5(\Omega_0)} \bigr) \\
& \lesssim \alpha^2 + \gamma^2 + \gamma_v + H.
\end{alignat*}
On $\Omega_-$ we apply Lemma \ref{Lem:hk} (b)
\[ \left| \int_{\Omega_-} \% \right| \lesssim \int_{\Omega_-} e^{-c|x-x_0|/r_0} \bigl( r_0^{-1} (|R_u| + |I_u|) + r_0^{-2} (|S_u| + |J_u|) \bigr) dx dt. \]
Now choosing $b = (2(n-1)^{-1} \sigma^{-1} + 1)^{1/2}$, we can guarantee that $\Omega_0 \subset P_{br_0}(x_0)$.
So $\Omega_- \subset \bigcup_{k=0}^\infty P_{br_0}(x_0-2k b r_0)$, and hence using Lemma \ref{Lem:RSbound} (b), we can bound the integral above by (observe that $r_0 \geq 4 \sigma$)
\begin{multline*} 
\sum_{k=0}^\infty \int_{P_{br_0}(x_0 - 2kbr_0) \cap \IR \times [0,t_0]} e^{-2cb (k-1)}  \bigl( r_0^{-1} (|R_u| + |I_u|) + r_0^{-2} (|S_u| + |J_u|) \bigr) \\
 \lesssim \alpha^2 + \beta^2 + \beta_v + H.
\end{multline*}
For the integral over the domain $\Omega_+$ we have to be a bit more careful since the local scale needs to change with $x$.
Set for $k \geq 1$
\[ y_k = r_0 k^2 + x_0, \qquad r_k = r_0 (k+1), \qquad z_k = \sfrac12 (y_k + y_{k+1}) \]
We check that $D \cap [y_k, y_{k+1}] \times [0,t_0] \subset P_{br_k}(z_k)$:
First note that
\[ y_{k+1} - y_k = r_0 (2k+1) < 2 b r_0 (k+1) = 2 b r_k. \]
Secondly, we show that $b^2 r_k^2 - (n-1)^{-1} z_k^- \geq t_0$.
Since $\Omega_0 \subset P_{br_0}(x_0)$, we already know that $b^2 r_0^2 - (n-1)^{-1} x_0^- \geq t_0$.
Then with
\[ b^2 (r_k^2 - r_0^2) = b^2 r_0^2 (k^2 + 2k) \geq b^2 \sigma r_0 (k^2 + k + {\textstyle \frac12} ) \geq (n-1)^{-1} (z_k - x_0) \]
the desired inequality follows.

Now observe that $\frac12 r_k \leq y_k - x_0$ and $(y_k-x_0)/ (\frac12 r_k) \geq k$.
So by Lemma \ref{Lem:hk} (b) (with $\ov r= \frac12 r_k$) we have for $(x,t) \in D \cap [y_k,y_{k+1}] \times [0,t_0]$
\[ |\Phi|(x_0 - x, t_0 - t) \lesssim r_k^{-1} e^{- c (y_k - x_0)/(\frac12 r_k)} \leq r_k^{-1} e^{-ck}, \quad |\Phi'|(x_0 - x, t_0 - t) \lesssim r_k^{-2} e^{- c k}. \]
So we can split up the integral over $\Omega_+$ and conclude
\begin{multline*}
 \left| \int_{\Omega_+} \% \right| \leq \sum_{k=1}^{\infty} \biggl| \int_{D \cap [y_k,y_{k+1}] \times [0,t_0]} \% \biggr| \\
  \lesssim \sum_{k=1}^{\infty} e^{-ck} \int_{P_{br_k}(z_k)} r_k^{-1} ( |R_u| + |I_u| ) + r_k^{-2} (|S_u| + |J_u|) \lesssim \alpha^2 + \beta^2 + \beta_v + H.
\end{multline*}

The bound on $v(x_0,t_0)$ is derived in the same way.
Observe here that the bounds which were used for $\Phi$, also apply for $\ov\Phi$.
\end{proof}

\begin{Lemma} \label{Lem:beta}
If $\alpha < 0.1$, then we have
\begin{alignat*}{1}
\beta_u &\lesssim \alpha^2 + \beta^2 + \alpha_u + \alpha_u^{1/2} \left( \alpha + \beta + \beta_v^{1/2} + H^{1/2} \right) + H\\
\beta_v &\lesssim  \alpha^2 + \beta^2 + \alpha_v + \beta_u H + \alpha_v^{1/2}\left( \alpha + \beta + \beta_u^{1/2} H^{1/2} + H^{1/2} \right) + H
\end{alignat*}
\end{Lemma}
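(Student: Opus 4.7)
The plan is to prove both inequalities by local energy estimates in space--time. For $\beta_u$, fix $x_0$ and $r \ge \sigma$, and let $\eta$ be a smooth spatial cutoff equal to $1$ on $[x_0-r,x_0+r]$, supported in $[x_0-2r,x_0+2r]$, with $|\eta'| \lesssim 1/r$. Multiplying (\ref{eq:flow1}) by $u\eta^2$, integrating by parts in the $u''$, $S_u'$, and $J_u'$ terms, and then integrating over $t \in [0, r^2 - (n-1)^{-1}x_0^-]$, one obtains (after Cauchy--Schwarz, which absorbs the cross terms $uu'\eta\eta'$, $u'S_u\eta^2$, $u'J_u\eta^2$ into the principal $\iint (u')^2 \eta^2$) the energy inequality
\[
 \|u'\|_{L^2(P_r(x_0))}^2 \lesssim r\alpha_u^2 + \|S_u + J_u\|_{L^2(P_{2r}(x_0))}^2 + \alpha_u\bigl(\|R_u + I_u\|_{L^1(P_{2r}(x_0))} + \|S_u + J_u\|_{L^1(P_{2r}(x_0))}\bigr).
\]
Dividing by $r$, taking square roots, and feeding in Lemma \ref{Lem:RSbound}(b) produces exactly the terms in the stated bound: the $r\alpha_u^2/r$ piece yields $\alpha_u^2$ (hence $\alpha_u$); the $L^2$-pieces give $\alpha^2 + \beta^2 + H$; and the $\alpha_u\cdot r\cdot(\alpha^2+\beta^2+\beta_v+H)$-pieces give exactly the pattern $\alpha_u^{1/2}(\alpha+\beta+\beta_v^{1/2}+H^{1/2})$.

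For $\beta_v$ one runs the same energy identity on (\ref{eq:flow2}). The extra linear term contributes a nonnegative $\zeta\iint v^2\eta^2$ on the left, which simultaneously controls $\|v\|_{L^2(P_r)}$ in addition to $\|v'\|_{L^2(P_r)}$. The stronger $\|v\|_{L^2_{\mu_2}}$-piece and the $L^1_{\mu_1}$-pieces follow from these $L^2$-bounds by H\"older together with the exponent balance $\tfrac{1}{1+\mu_1} = \tfrac{1}{2+2\mu_2} + \tfrac12$ built into the norms (the same balance exploited in Lemma \ref{Lem:RSbound}(b) for $S_v$). The genuinely new feature is the coupling term $u'*J_u$: estimating $\iint v(u'*J_u)\eta^2 \le H\|u'\|_{L^2(P_{2r})}\|v\|_{L^2(P_{2r})}$ and absorbing $\|v\|_{L^2}^2$ into the damping $\zeta\iint v^2\eta^2$ via Young's inequality produces the clean $\beta_u H$-contribution, while instead bounding $v \le \alpha_v$ pointwise in that same integral yields the mixed $\alpha_v^{1/2}\beta_u^{1/2}H^{1/2}$ contribution.

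The main obstacle is organizational rather than analytical: one must match each nonlinear piece $R_\bullet, S_\bullet, I_\bullet, J_\bullet$ and the coupling $u'*J_u$ against the correctly weighted $L^p$-norm so that Lemma \ref{Lem:RSbound}(b) applies cleanly, and absorb all cross terms from integration by parts uniformly in $r \ge \sigma$ and in $x_0$. The energy identity itself is standard; no further harmonic analysis beyond what was used for $\alpha$ in Lemma \ref{Lem:alpha} is needed.
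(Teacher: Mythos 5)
Your treatment of $\beta_u$, and of the plain $L^2$-components $r^{-1/2}\Vert v'\Vert_{L^2(P_r)}$ and $r^{-1/2}\Vert v\Vert_{L^2(P_r)}$ of $\beta_v$, is essentially the paper's argument: the localized energy estimate with the cutoff $\eta_{r_0,x_0}$, the damping term $\zeta\int v^2\eta^2$, and Lemma \ref{Lem:RSbound}(b) give exactly the displayed inequality, and the square root reproduces the terms $\alpha_u + \alpha_u^{1/2}(\alpha+\beta+\beta_v^{1/2}+H^{1/2})$ etc. One point you gloss over: the spatial boundary of $D$ is the moving line $x=-(n-1)t$, so both differentiating the time slices and integrating $u''$, $S_u'$, $J_u'$ by parts produce boundary terms $u u'$, $uS_u$, $uJ_u$ (and the analogues for $v$) evaluated on that line; these are controlled only because the hypothesis of Proposition \ref{Prop:cuspinv2} gives $|u|,|u'|,|v|,|v'|\lesssim H$ (hence $|S_u|,|J_u|\lesssim H$) on $B_{9\sigma}(\partial_p D)$, and they account for part of the $H$-terms. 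This is a minor omission.

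The genuine gap is your claim that the $L^1_{\mu_1}$- and $L^2_{\mu_2}$-components of $\beta_v$ ``follow from these $L^2$-bounds by H\"older together with the exponent balance.'' They do not, for two reasons. First, H\"older degrades the $r$-weights: from $r^{-1/2}\Vert v\Vert_{L^2(P_r)}\lesssim(\cdots)^{1/2}$ one only gets $r^{-1}\Vert v\Vert_{L^1_{\mu_1}(P_r)}\lesssim r^{-1}|P_r|^{1/2}\Vert v\Vert_{L^2(P_r)}\sim r\,(\cdots)^{1/2}$, which is useless uniformly in $r\geq\sigma$; the weight $r^{-1}$ on the $L^1$-type norm encodes the expected $1/t$ decay of $v$, strictly more than the $t^{-1/2}$ information contained in the energy bound, so no interpolation downward from $L^2$ can recover it. Second, the subscripts $\mu_1,\mu_2$ require \emph{higher local integrability} ($L^{1+\mu_1}$, $L^{2+2\mu_2}$ on $\sigma$-boxes), which H\"older cannot produce from $L^2$ at all; the relation $\frac1{1+\mu_1}=\frac1{2+2\mu_2}+\frac12$ is used in Lemma \ref{Lem:RSbound}(b) to estimate the nonlinearity $S_v$, not to upgrade norms of $v$ itself. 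The paper bounds these components by a different mechanism: the Duhamel splitting $v=v^*+v^{**}$ of subsection \ref{subsec:representu}, Lemma \ref{Lem:starstar}(c) for $v^{**}$, Young's inequality using the finiteness of $\Vert\ov\Phi\Vert_{L^{1+\mu}}$, $\Vert\ov\Phi'\Vert_{L^{1+\mu}}$ on small boxes together with the exponential decay of $\ov\Phi$ (Lemma \ref{Lem:hk}(b),(c) and the covering by $P_{br_0}(x_0-2kbr_0)$, $P_{br_k}(z_k)$ as in Lemma \ref{Lem:alpha}) for $v^*$, the Calder\'on--Zygmund Lemma \ref{Lem:CZ} for the $(v^*)'$-piece in $L^1_{\mu_1}$ (since $\ov\Phi''$ is not locally integrable), and interpolation of the $L^1_{\mu_2}$-bound against $\Vert v\Vert_{L^\infty}\leq\alpha_v$ for the $L^2_{\mu_2}$-piece. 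These steps are precisely where the clean $\beta_u H$ term and the $\alpha_v^{1/2}(\alpha+\beta+\beta_u^{1/2}H^{1/2}+H^{1/2})$ term in the statement originate, and they contradict your closing assertion that no harmonic analysis beyond what was used for Lemma \ref{Lem:alpha} is needed.
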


\begin{figure}[bt]
\caption{The parabolic domains used in the proof of Lemma \ref{Lem:beta}.}
\label{fig:beta}
\begin{center}
\begin{picture}(0,0)%
\includegraphics[width=12cm]{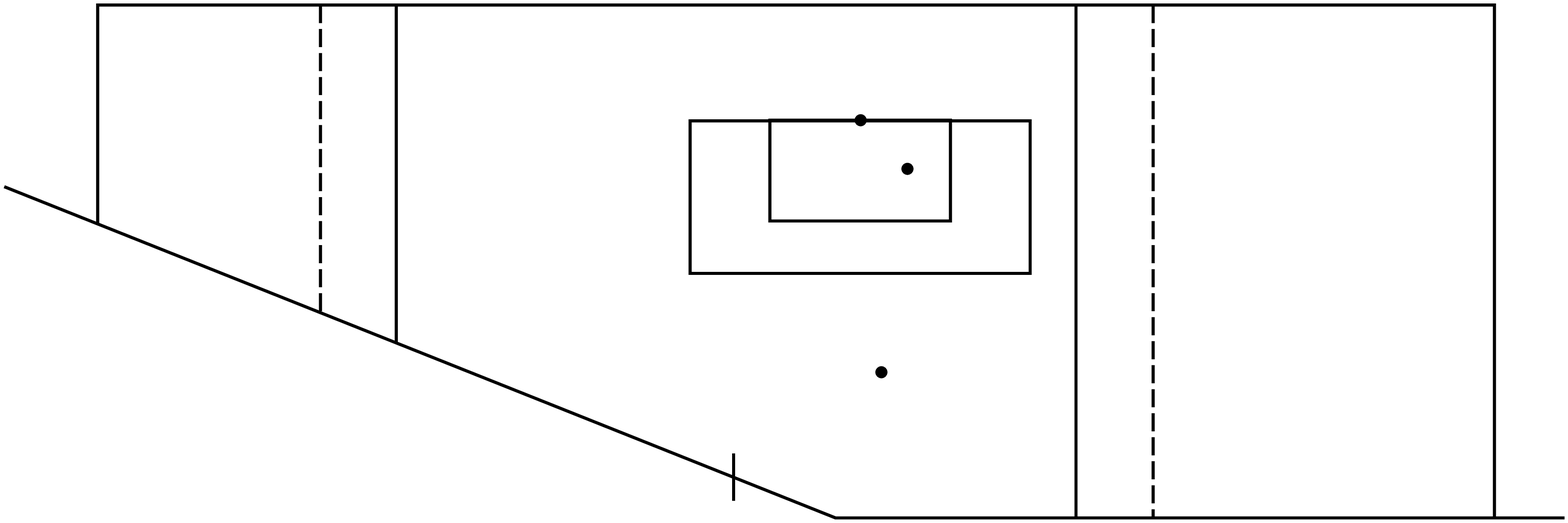}%
\end{picture}%
 \setlength{\unitlength}{3947sp}%
 \setlength{\unitlength}{0.6\unitlength}
 \begingroup\makeatletter\ifx\SetFigFont\undefined%
 \gdef\SetFigFont#1#2#3#4#5{%
 \reset@font\fontsize{#1}{#2pt}%
 \fontfamily{#3}\fontseries{#4}\fontshape{#5}%
 \selectfont}%
 \fi\endgroup%
 \begin{picture}(9800,3500)(518,-4964) \put(930,-4210){\makebox(0,0)[lb]{\smash{{\SetFigFont{12}{14.4}{\familydefault}{\mddefault}{\updefault}$x=-(n-1) t$}}}}
 \put(600,-2000){\makebox(0,0)[lb]{\smash{{\SetFigFont{12}{14.4}{\familydefault}{\mddefault}{\updefault}$t_0$}}}}
 \put(4400,-5000){\makebox(0,0)[lb]{\smash{{\SetFigFont{12}{14.4}{\familydefault}{\mddefault}{\updefault}$x_0$}}}}
 \put(5100,-2400){\makebox(0,0)[lb]{\smash{{\SetFigFont{12}{14.4}{\familydefault}{\mddefault}{\updefault}$(x_1,t_1 + \frac{\sigma^2}2)$}}}}
 \put(5100,-2900){\makebox(0,0)[lb]{\smash{{\SetFigFont{12}{14.4}{\familydefault}{\mddefault}{\updefault}$P_1$}}}}
 \put(4600,-2900){\makebox(0,0)[lb]{\smash{{\SetFigFont{12}{14.4}{\familydefault}{\mddefault}{\updefault}$P'_1$}}}}
 \put(1540,-2900){\makebox(0,0)[lb]{\smash{{\SetFigFont{12}{14.4}{\familydefault}{\mddefault}{\updefault}$P'$}}}}
 \put(2400,-2900){\makebox(0,0)[lb]{\smash{{\SetFigFont{12}{14.4}{\familydefault}{\mddefault}{\updefault}$P_0'$}}}}
 \put(2900,-2900){\makebox(0,0)[lb]{\smash{{\SetFigFont{12}{14.4}{\familydefault}{\mddefault}{\updefault}$P_0$}}}}
 \put(5450,-3100){\makebox(0,0)[lb]{\smash{{\SetFigFont{12}{14.4}{\familydefault}{\mddefault}{\updefault}$(x,t)$}}}}
 \put(5300,-4400){\makebox(0,0)[lb]{\smash{{\SetFigFont{12}{14.4}{\familydefault}{\mddefault}{\updefault}$(x',t')$}}}}
 \put(5200,-5300){\makebox(0,0)[lb]{\smash{{\SetFigFont{12}{14.4}{\familydefault}{\mddefault}{\updefault}$(0,0)$}}}}
 \put(6500,-5300){\makebox(0,0)[lb]{\smash{{\SetFigFont{12}{14.4}{\familydefault}{\mddefault}{\updefault}$x_0 + r_0$}}}}
 \put(8800,-5300){\makebox(0,0)[lb]{\smash{{\SetFigFont{12}{14.4}{\familydefault}{\mddefault}{\updefault}$x_0 + 2r_0$}}}}
 \end{picture}%
\end{center}
\vspace{7mm}
\end{figure}

\begin{proof}
We first bound $\beta_u$.
Let $x_0 \in \IR$ and $r_0 \geq \sigma$ be given and set $t_0 = \min \{ r_0^2 - (n-1)^{-1} x_0^-, T \}$.
Let $\eta : \IR \to [0,1]$ be a cutoff function which is $\equiv 1$ on $[-1,1]$ and $\equiv 0$ outside $[-2,2]$ and set $\eta_{r_0, x_0}(x) = \eta((x-x_0)/r_0)$.
Then (\ref{eq:flow1}) and integration by parts gives for each $t \in [0,t_0)$:
\begin{align*}
& \partial_t \int_{-(n-1) t}^\infty \eta^2_{r_0, x_0} |u|^2 + \int_{-(n-1) t}^\infty \eta^2_{r_0, x_0} |u'|^2   \\
& \quad = (n-1) \bigl( \eta^2_{r_0, x_0} |u|^2 \bigr) (- (n-1) t, t)
+ \int_{-(n-1) t}^\infty \eta^2_{r_0, x_0} \big(2 u u'' + 2 u (R_u + I_u)  \\
&\quad\qquad\hspace{9cm} + 2 u (S'_u + J'_u) + |u'|^2  \big) \displaybreak[1] \\ 
&\quad = (n-1) \bigl( \eta^2_{r_0, x_0} |u|^2 \bigr) (-(n-1) t, t) - 2 \eta_{r_0, x_0}^2 (u u' + u S_u + u J_u ) (-(n-1) t, t) \\
& \quad\qquad - 4 \int_{-(n-1) t}^\infty   \eta_{r_0, x_0} \eta_{r_0, x_0}' ( u u' + u S_u + u J_u)  \\
& \quad\qquad + \int_{-(n-1) t}^\infty \eta^2_{r_0,x_0} \big(- |u'|^2 + 2u (R_u + I_u) - 2u' (S_u + J_u) \big) 
\intertext{The first two terms can be bounded by $C H^2 \eta_{r_0, x_0}^2(-(n-1)t,t)$ and using the fact that the integrand of the third term is bounded by $\eta_{r_0,x_0}^2 (|u'|^2 + |S_u|^2 + |J_u|^2) + 12 (\eta'_{r_0, x_0})^2 |u|^2$, we continue} \displaybreak[1]
& \quad \lesssim H^2 \eta_{r_0, x_0}^2(-(n-1) t,t) + \int_{x_0 - 2r_0}^{x_0 + 2r_0} \frac1{r_0^2} |u|^2 + |u| (|R_u| + |I_u|) + |S_u|^2 + |J_u|^2. 
\end{align*}
Integrating this over $t$ from $0$ to $t_0$ and using Lemma \ref{Lem:RSbound} (b) yields for $P' = [x_0 - 2r_0, x_0 + 2r_0] \times [0,t_0] \cap D$
\begin{multline*}
 \Vert u' \Vert^2_{L^2(P_{r_0}(x_0))} \lesssim r_0 H^2 + r_0 \alpha_u^2 + \alpha_u \int_{P'} \bigl( |R_u| + |I_u| \bigr) + \int_{P'} \bigl( |S_u|^2 + |J_u|^2 \bigr) \\
 \lesssim r_0 H^2 + r_0 \alpha_u^2 + r_0 \alpha_u \left( \alpha^2 + \beta^2 + \beta_v + H \right) + r_0 \left( (\alpha^2 + \beta^2)^2 + H^2 \right) .
\end{multline*}
This establishes the bound on $\beta_u$.
If we carry out the same procedure for equation (\ref{eq:flow2}) instead of (\ref{eq:flow1}), we get an additional $\Vert v \Vert_{L^2(P_{r_0}(x_0))}$-term on the left hand side by the exponential decay property of the linearization.
Moreover, there will be no $\beta_v$-term on the right hand side, but the extra $u' * J_u$-term produces a $\beta_u H$-term:
\begin{multline*}
 \Vert v' \Vert^2_{L^2(P_{r_0}(x_0))} + \Vert v \Vert^2_{L^2(P_{r_0}(x_0))} \\ \lesssim r_0 H^2 + r_0 \alpha_v^2 + r_0 \alpha_v \left( \alpha^2 + \beta^2 + \beta_u H + H \right) + r_0 \left( (\alpha^2 + \beta^2)^2 + H^2 \right)
\end{multline*}

It remains to bound $r_0^{-1} \Vert v \Vert_{L^1_{\mu_1}(P_{r_0}(x_0))}$, $r_0^{-1/2} \Vert v \Vert_{L^2_{\mu_2}(P_{r_0}(x_0))}$ and $r_0^{-1} \Vert v' \Vert_{L^1_{\mu_1}(P_{r_0}(x_0))}$.
We first establish the corresponding bounds for $v^{**}$.
For this note that for any $(x,t) \in D$ with $r(x,t) \geq \sigma$, the vertical distance $s$ to the parabolic boundary $\partial_p D$ can be estimated by $s = t - (n-1)^{-1} x^- \leq C r^2(x,t)$.
The bounds for $v^{**}$ then follow from Lemma \ref{Lem:starstar} (c) and the fact that $\int_0^\infty \exp ( - s^{1/2} ) d s < \infty$.

It remains to establish the bounds for $v^*$.
We first discuss the bound on $r_0^{-1} \Vert v^* \Vert_{L^1_{\mu_1}(P_{r_0}(x_0))}$.
Choose $(x_1, t_1) \in P_0 = P_{r_0}(x_0)$ and set $P_1 = [x_1 - \sigma, x_1 + \sigma] \times [t_1 - \frac12 \sigma^2, t_1 + \frac12 \sigma^2] \cap P_0$ and $P'_1 = [x_1 - 2 \sigma, x_1 + 2\sigma] \times [t_1- \frac32 \sigma^2, t_1 + \frac12 \sigma^2] \cap P_0'$ where $P'_0 = [x_0-r_0-\sigma,x_0+r_0+\sigma] \times [0,t_0] \cap D$.
Let $(x,t) \in P_1$. 
By (\ref{eq:vstar}) we have
\begin{multline} v^*(x,t) = \int_{D \cap \IR \times [0,t]} \varphi^2 \bigl[ \ov \Phi(x-x',t-t') \bigl(R_v + I_v + u' * J_u \bigr)\\
+ \ov \Phi'(x-x',t-t') \bigl( S_v + J_v \bigr) \bigr] dx' dt'. \label{eq:no1}
\end{multline}
We can represent $v^* = v_1 + v_2$, where $v_1$ denotes the integral above over the domain $P'_1$ and $v_2$ the integral over the domain $D \cap \IR \times [0,t] \setminus P'_1$.
Since $\Vert \ov \Phi \Vert_{L^{1+ \mu}([-3\sigma,3\sigma] \times [0, 2\sigma^2])}$ and $\Vert \ov \Phi' \Vert_{L^{1+\mu}([-3\sigma,3\sigma] \times [0, 2\sigma^2])}$ are finite for $\mu < 1/2$, Young's inequality yields
\begin{equation}  
 \Vert v_1 \Vert_{L^{1+\mu_1}(P_1)} \lesssim \Vert R_v + I_v + u' * J_u \Vert_{L^1(P'_1)} + \Vert S_v + J_v \Vert_{L^1(P'_1)} \label{eq:no2} 
\end{equation}
We now integrate both sides over $(x_1,t_1) \in P_0$ and obtain by Lemma \ref{Lem:RSbound} (b)
\begin{alignat}{1} 
\Vert v_1 \Vert_{L^1_{\mu_1}(P_0)} &\lesssim \Vert R_v + I_v + u' * J_u \Vert_{L^1(P'_0)} +  \Vert S_v + J_u \Vert_{L^1(P'_0)} \notag \\
& \qquad \lesssim r_0 \left( \alpha^2 + \beta^2 + \beta_u H + H \right) \label{eq:no3}
\end{alignat}

We will now bound $v_2$.
Fix $(x_1,t_1) \in P_0$ again. 
By  Lemma \ref{Lem:hk} (c)
\[ |v_2|(x,t) \lesssim \int_{D \cap \IR \times [0,t]} e^{-c|x-x'|-c|t-t'|} \left( |R_v + I_v + u' * J_u| + |S_v + J_v| \right) dx' dt' \]
for all $(x,t) \in P_1$ and hence
\begin{multline*}
 \Vert v_2 \Vert_{L^{1+\mu_1}(P_1)} \lesssim \int_{D \cap \IR \times [0,t_1]} e^{-c|x_1-x'|-c|t_1-t'|} \times \\
\bigl( |R_v + I_v + u' * J_u| + |S_v + J_v| \bigr) dx' dt' = \int_{P' \cap \IR \times [0,t_1]} \% + \int_{(D \setminus P') \cap \IR \times [0,t_1]} \%.
\end{multline*}
Now let $(x_1,t_1)$ vary over $P_0$ and compute the $L^1$-norm of $\Vert v_2 \Vert_{L^{1+\mu_1}(P_1)}$ (recall that $P_1$ depends on $(x_1,t_1)$).
By the inequality above, we can estimate this $L^1$-norm by the sum of both $L^1$-norms of the two integrals on the right hand side.
Since $e^{-c |x_1 - x'| - c|t_1-t'|}$ is bounded in $L^1$, we can use Young's inequality to bound the $L^1$-norm of the first integral by
\[ \Vert |R_v + I_v + u' * J_u| + |S_v + J_v| \Vert_{L^1(P')} \lesssim r_0 (\alpha^2 + \beta^2 + \beta_u H + H). \]
As for the second integral, it suffices to show the even stronger $L^\infty$-bound
\[ \biggl| \int_{(D \setminus P') \cap \IR \times [0,t_1]} \% \biggr| \lesssim r_0^{-2} (\alpha^2 + \beta^2 + \beta_u H + H). \]
In order to derive this inequality, we cover the domain $(D \setminus P') \cap \IR \times [0,t_1]$ by regions $P_{br_0}(x_0 - 2k br_0)$ and $P_{br_k}(z_k)$ as in the proof of Lemma \ref{Lem:alpha}.
Note that we have since $r_0 \geq \sigma$
\[ e^{-c|x_1 - x| - c|t_1-t|} \lesssim r_0^{-3} e^{-c k} \qquad \text{on} \qquad P_{b r_0}(x_0 - 2k b r_0) \setminus P' \]
for $k \geq 0$ and
\begin{multline*} e^{-c|x_1 - x| - c|t_1 - t|} \leq \min \{ e^{-cr_0}, e^{-c r_0 k^2 + c r_0} \} \leq e^{- c' r_0 - c' r_0 (k+1) - c' \sigma k} \\ \lesssim r_0^{-2} r_k^{-1} e^{-c'k} \qquad \text{on} \qquad [y_k, y_{k+1}] \times [0,t_1] \setminus P'
\end{multline*}
for $k \geq 1$.
So
\begin{multline*} \biggl| \int_{(D \setminus P') \cap \IR \times [0,t_1]} \% \biggr| \lesssim \sum_{k=0}^\infty \int_{P_{br_0}(x_0 - 2kbr_0)}   r_0^{-3} e^{-ck} (|R_v + I_v + u' * J_u| + |S_v + J_v|)  \\
+ \sum_{k=1}^\infty \int_{P_{br_k}(z_k)} r_0^{-2} r_k^{-1} e^{-c'k} (|R_v + I_v + u' * J_u| + |S_v + J_v|) \\
 \lesssim r_0^{-2} \left( \alpha^2 + \beta^2 + \beta_u H + H \right).
\end{multline*}
This establishes the required bound.

Next, we establish the bound on $r_0^{-1/2} \Vert v \Vert_{L^2_{\mu_2}(P_{r_0}(x_0))}$.
Observe for this that by the argument above with $\mu_1$ replaced by $\mu_2$, we obtain
\[ r_0^{-1} \Vert v \Vert_{L^1_{\mu_2}(P_{r_0}(x_0))} \lesssim \alpha^2 + \beta^2 + \beta_u H + H. \]
Using the interpolation inequality, we can conclude
\begin{multline*}
 r_0^{-1/2} \Vert v \Vert_{L^2_{\mu_2}(P_{r_0}(x_0))} \leq \Vert v \Vert_{L^\infty(P_{r_0}(x_0))}^{1/2} \big( r_0^{-1} \Vert v \Vert_{L^1_{\mu_2}(P_{r_0}(x_0))} \big)^{1/2} \\
 \lesssim \alpha_v^{1/2} \left( \alpha^2 + \beta^2 + \beta_u H + H \right)^{1/2}.
\end{multline*}

Finally, we explain how the bound on $r_0^{-1} \Vert (v^*)' \Vert_{L^1_{\mu_1}(P_{r_0}(x_0))}$ is derived.
The argument is almost the same as for $r_0^{-1} \Vert v^* \Vert_{L^1_{\mu_1}(P_{r_0}(x_0))}$ with the following modifications:
In (\ref{eq:no1}) we have to replace $\ov \Phi$ by $\ov \Phi'$ and $\ov \Phi'$ by $\ov \Phi''$.
In the estimate (\ref{eq:no2}) for $\Vert v_1' \Vert_{L^{1+\mu_1}(P_1)}$ we now have to apply Lemma \ref{Lem:CZ} on the second term to find
\[ \Vert v'_1 \Vert_{L^{1+\mu_1}(P_1)} \lesssim \Vert R_v + I_v + u' * J_u\Vert_{L^1(P'_1)} + \Vert S_v + J_v \Vert_{L^{1+\mu_1}(P'_1)} \]
and thus in (\ref{eq:no3}), we get
\begin{align*}
\Vert v'_1 \Vert_{L^1_{\mu_1}(P_0)} &\lesssim \Vert R_v + I_v + u' * J_u\Vert_{L^1(P'_0)} + \Vert S_v + J_v \Vert_{L^1_{\mu_1}(P'_0)} \\
&\qquad \lesssim r_0 (\alpha^2 + \beta^2 + \beta_u H + H).
\end{align*}
The estimate on $v_2'$ stays the same.
\end{proof}

\begin{Lemma} \label{Lem:gamma}
If $\alpha < 0.1$, then we have
\begin{alignat*}{1}
 \gamma_u &\lesssim \alpha^2 + \beta^2  + \gamma^2 + \beta_v + \gamma_v + H \\
 \gamma_v &\lesssim \alpha^2 + \beta^2  + \gamma^2 + \alpha_v + (\beta_u + \gamma_u)H +  H
\end{alignat*}
\end{Lemma}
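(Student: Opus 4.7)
The plan is to mirror the arguments of Lemmas~\ref{Lem:alpha} and~\ref{Lem:beta}, now adapted to the weighted local $L^5$- and $L^{5/2}$-norms appearing in $\gamma_u$ and $\gamma_v$. Fixing $(x_0,t_0)\in D$ and setting $r_0=r(x_0,t_0)$, the case $r_0<4\sigma$ is immediate: for $\sigma$ small, $Q(x_0,t_0)\subset B_{9\sigma}(\partial_p D)$, so the hypothesis of Proposition~\ref{Prop:cuspinv2} gives $|u|+|u'|+|v|+|v'|\lesssim H$ on $Q$, and multiplying by $|Q|^{1/5}=r_0^{3/5}$ or $|Q|^{2/5}=r_0^{6/5}$ absorbs the weight $r_0^{2/5}$ or $r_0^{4/5}$ into a bound $\lesssim H$. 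In the main case $r_0\ge 4\sigma$ I would use the splittings $u=u^*+u^{**}$, $v=v^*+v^{**}$ from subsection~\ref{subsec:representu}.

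For the $**$-pieces, Lemma~\ref{Lem:starstar}(b) supplies $\Vert(u^{**})'\Vert_{L^\infty(Q)}\lesssim r_0^{-1}H$, so $r_0^{2/5}\Vert(u^{**})'\Vert_{L^5(Q)}\lesssim H$, and parts~(a) and~(c) applied according as $r_0$ is small or large will give the analogous bounds for $v^{**}$ and $(v^{**})'$. For the $*$-pieces I would split the integration domain in~(\ref{eq:ustar}) and~(\ref{eq:vstar}) into a local part $Q'=[x_0-3r_0,x_0+3r_0]\times[t_0-3r_0^2,t_0]\cap D$ and its complement, and treat the two separately. On $Q'$ the plan is to apply Young's convolution inequality using the $L^{5/3}$-bound on $\Phi$ and the $L^{5/4}$-bound on $\Phi'$ from Lemma~\ref{Lem:hk}(a), together with the bounds on $R_u, R_v, S_u, S_v, u'*J_u$ and on the input terms from Lemma~\ref{Lem:RSbound}(a). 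For the kernel $\ov\Phi$ the factor $e^{-\zeta t}$ makes $\Vert\ov\Phi\Vert_{L^p(\IR\times[0,\infty))}$ and $\Vert\ov\Phi'\Vert_{L^p(\IR\times[0,\infty))}$ finite for every $p<3/2$, which is what permits the analogous Young step for $v^*$ and $(v^*)'$. The second-derivative convolutions $\Phi''*(S_u+J_u)$ and $\ov\Phi''*(S_v+J_v)$ arising when we differentiate the representations to bound $(u^*)'$ and $(v^*)'$ have to be controlled instead by the parabolic Calder\'on--Zygmund estimate of Lemma~\ref{Lem:CZ}, since $\Phi''$ fails to be locally integrable in time.

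For the non-local part I would proceed exactly as in the proof of Lemma~\ref{Lem:alpha}: Lemma~\ref{Lem:hk}(b), with the stated powers for $\Phi$ and a sufficiently high power $\ov r^{-k}$ for $\ov\Phi$, controls the kernels pointwise outside $Q'$, and the slab decomposition $\bigcup_{k\ge 0}P_{br_0}(x_0-2kbr_0)\cup\bigcup_{k\ge 1}P_{br_k}(z_k)$ of $D\setminus Q'$ combined with the $L^1$-bounds of Lemma~\ref{Lem:RSbound}(b) summed against the geometric factor $e^{-ck}$ will produce an $L^\infty(Q)$ bound on the non-local integral by a suitably negative power of $r_0$ times $\alpha^2+\beta^2+\beta_v+\beta_u H+H$. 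Converting this $L^\infty$-bound into the weighted $L^5$ or $L^{5/2}$ norm of $\gamma_u,\gamma_v$ costs at most a factor of $r_0^{3/5}$ or $r_0^{6/5}$, which is absorbed by choosing $k$ large. Assembling the $**$, local, and non-local pieces then yields the two stated inequalities. The main technical obstacle is the careful matching of Young's-inequality exponents so that the $r_0$-powers cancel: the cube-scale bounds of Lemma~\ref{Lem:hk}(a) suffice for $\Phi$, but for $\ov\Phi$ one must repeatedly exploit the time-exponential decay to obtain scale-independent $L^p$-control, and for $\Phi''$ and $\ov\Phi''$ one must genuinely invoke Lemma~\ref{Lem:CZ}.
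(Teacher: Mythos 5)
Your proposal follows the paper's own proof essentially verbatim: the same case split on $r_0$, the decompositions $u=u^*+u^{**}$, $v=v^*+v^{**}$ with Lemma \ref{Lem:starstar}, the local Young/Calder\'on--Zygmund step via Lemmas \ref{Lem:hk}(a), \ref{Lem:CZ} and \ref{Lem:RSbound}(a), and the non-local slab covering $P_{br_0}(x_0-2kbr_0)$, $P_{br_k}(z_k)$ with Lemma \ref{Lem:hk}(b) (including the higher power of $\ov r^{-1}$ for $\ov\Phi$) and Lemma \ref{Lem:RSbound}(b). The only cosmetic difference is the piece $r^{2/5}\Vert v\Vert_{L^5(Q)}$, which the paper obtains from the interpolation $\Vert v\Vert_{L^5}\leq\Vert v\Vert_{L^\infty}^{1/2}\Vert v\Vert_{L^{5/2}}^{1/2}$ (this is where the $\alpha_v$ term in the statement comes from), whereas your scale-independent $L^p$-control of $\ov\Phi$, $\ov\Phi'$ for $p<3/2$ reaches it directly by Young's inequality as well.
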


\begin{figure}[bt]
\caption{The parabolic domains used in the proof of Lemma \ref{Lem:gamma}.}
\label{fig:gamma}
\begin{center}
\begin{picture}(0,0)%
\hspace{2mm}\includegraphics[width=14cm]{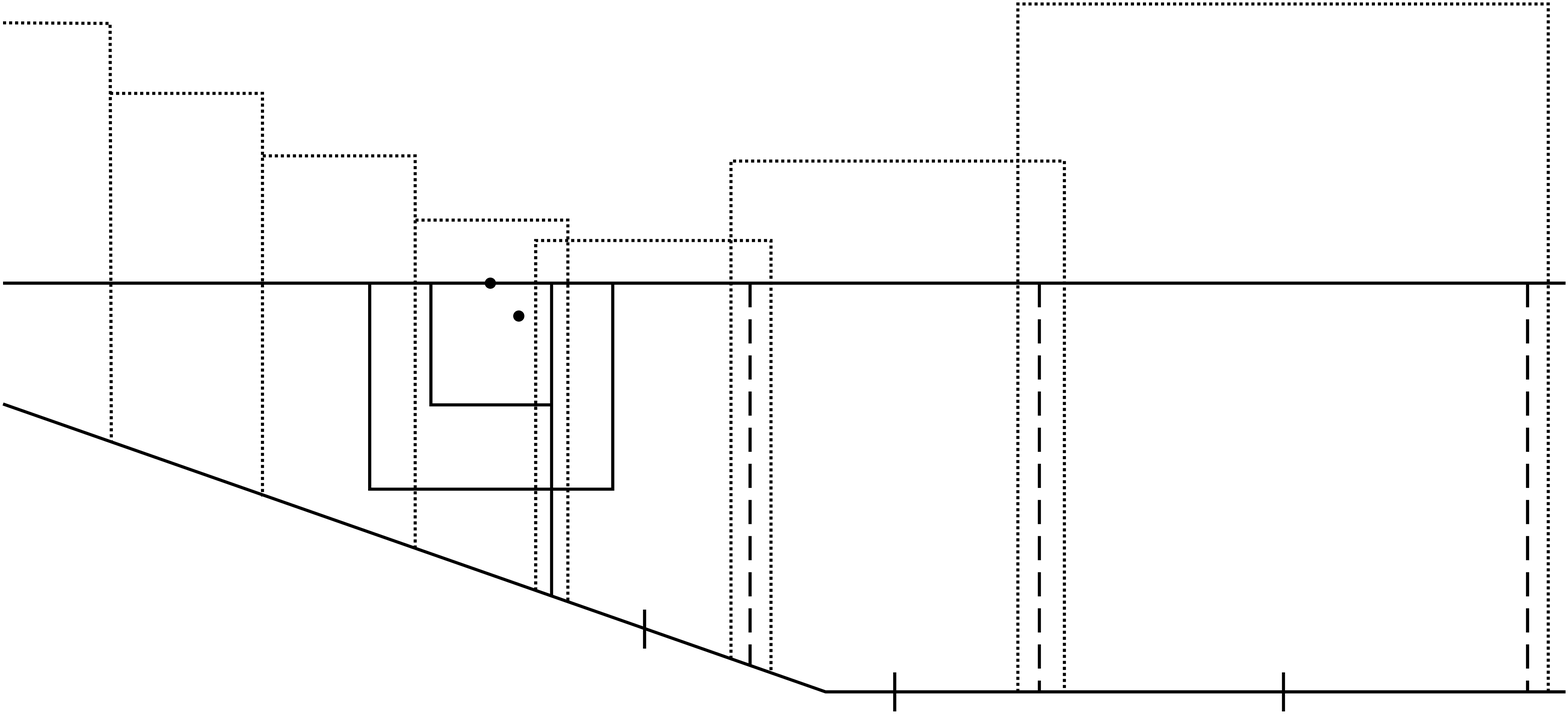}%
\end{picture}%
\setlength{\unitlength}{2863sp}%
\begingroup\makeatletter\ifx\SetFigFont\undefined%
\gdef\SetFigFont#1#2#3#4#5{%
  \reset@font\fontsize{#1}{#2pt}%
  \fontfamily{#3}\fontseries{#4}\fontshape{#5}%
  \selectfont}%
\fi\endgroup%
\begin{picture}(11195,4100)(518,-5050)
\put(1000,-4210){\makebox(0,0)[lb]{\smash{{\SetFigFont{12}{14.4}{\familydefault}{\mddefault}{\updefault}$x= -(n-1) t$}}}}
\put(3300,-3200){\makebox(0,0)[lb]{\smash{{\SetFigFont{12}{14.4}{\familydefault}{\mddefault}{\updefault}$\Omega$}}}}
\put(3300,-3690){\makebox(0,0)[lb]{\smash{{\SetFigFont{12}{14.4}{\familydefault}{\mddefault}{\updefault}$\Omega'$}}}}
\put(2350,-3750){\makebox(0,0)[lb]{\smash{{\SetFigFont{12}{14.4}{\familydefault}{\mddefault}{\updefault}$\scriptstyle \frac34 r_0^2$}}}}
\put(2350,-3300){\makebox(0,0)[lb]{\smash{{\SetFigFont{12}{14.4}{\familydefault}{\mddefault}{\updefault}$\scriptstyle \frac12 r_0^2$}}}}
\put(3350,-2450){\makebox(0,0)[lb]{\smash{{\SetFigFont{12}{14.4}{\familydefault}{\mddefault}{\updefault}$(x_0,t_0)$}}}}
\put(3200,-2800){\makebox(0,0)[lb]{\smash{{\SetFigFont{12}{14.4}{\familydefault}{\mddefault}{\updefault}$\scriptstyle (x,t)$}}}}
\put(2600,-2480){\makebox(0,0)[lb]{\smash{{\SetFigFont{12}{14.4}{\familydefault}{\mddefault}{\updefault}$\scriptstyle 2r_0$}}}}
\put(3050,-2480){\makebox(0,0)[lb]{\smash{{\SetFigFont{12}{14.4}{\familydefault}{\mddefault}{\updefault}$\scriptstyle r_0$}}}}
\put(1640,-1250){\makebox(0,0)[lb]{\smash{{\SetFigFont{12}{14.4}{\familydefault}{\mddefault}{\updefault}$P_{br_0}(x_0-2kbr_0)$}}}}
\put(4000,-4150){\makebox(0,0)[lb]{\smash{{\SetFigFont{12}{14.4}{\familydefault}{\mddefault}{\updefault}$P_{br_1}(z_1)$}}}}
\put(5400,-4150){\makebox(0,0)[lb]{\smash{{\SetFigFont{12}{14.4}{\familydefault}{\mddefault}{\updefault}$P_{br_2}(z_2)$}}}}
\put(7600,-4150){\makebox(0,0)[lb]{\smash{{\SetFigFont{12}{14.4}{\familydefault}{\mddefault}{\updefault}$P_{br_3}(z_3)$}}}}
\put(4310,-4860){\makebox(0,0)[lb]{\smash{{\SetFigFont{12}{14.4}{\familydefault}{\mddefault}{\updefault}$z_1$}}}}
\put(5750,-5230){\makebox(0,0)[lb]{\smash{{\SetFigFont{12}{14.4}{\familydefault}{\mddefault}{\updefault}$z_2$}}}}
\put(8030,-5230){\makebox(0,0)[lb]{\smash{{\SetFigFont{12}{14.4}{\familydefault}{\mddefault}{\updefault}$z_3$}}}}
\put(3710,-4660){\makebox(0,0)[lb]{\smash{{\SetFigFont{12}{14.4}{\familydefault}{\mddefault}{\updefault}$y_1$}}}}
\put(4950,-5050){\makebox(0,0)[lb]{\smash{{\SetFigFont{12}{14.4}{\familydefault}{\mddefault}{\updefault}$y_2$}}}}
\put(6630,-5230){\makebox(0,0)[lb]{\smash{{\SetFigFont{12}{14.4}{\familydefault}{\mddefault}{\updefault}$y_3$}}}}
\put(9430,-5230){\makebox(0,0)[lb]{\smash{{\SetFigFont{12}{14.4}{\familydefault}{\mddefault}{\updefault}$y_4$}}}}
\end{picture}%
\end{center}
\vspace{2mm}
\end{figure}

\begin{proof}
 Let $(x_0,t_0) \in D$ be given and set $r_0 = r(x_0,t_0)$ as well as $\Omega = Q_{r_0}(x_0)$.
If $r_0 < 4 \sigma$, then $\Omega \subset B_{9\sigma}(\partial_p D)$ and the bounds follow easily.
So assume that $r_0 \geq 4 \sigma$.

We first derive the bound on $\gamma_u$.
As in subsection \ref{subsec:ususs}, we decompose $u = u^* + u^{**}$ where $(u^{**})'$ satisfies the required bound by Lemma \ref{Lem:starstar} (b).
So we only have to derive the bound for $(u^*)'$.
For any $(x,t) \in \Omega$ we have by (\ref{eq:ustar})
\begin{multline*} 
(u^*)' (x,t) = \int_{D} \varphi^2 \bigl[ \Phi'(x-x',t-t') (R_u + I_u) \\ 
+ \Phi''(x-x',t-t')(S_u + J_u) \bigr] dx' dt'.
\end{multline*}
Represent $(u^*)' = u'_1 + u'_2$ where $u'_1$ denotes the integral above over the domain $\Omega' = [x_0 - 2 r_0, x_0 + 2 r_0] \times [t_0 - \frac34 r_0^2, t_0]$ and $u'_2$ the integral over the domain $D \setminus \Omega'$.
Using Young's inequality, Lemma \ref{Lem:hk} (a), Lemma \ref{Lem:CZ} and Lemma \ref{Lem:RSbound} (a), we find
\begin{multline}
 \Vert u'_1 \Vert_{L^5(\Omega)} \lesssim r_0^{2/5} \Vert R_u + I_u \Vert_{L^{5/2}(\Omega')} + \Vert S_u + J_u \Vert_{L^5(\Omega')} \\ \lesssim r_0^{-2/5} \left( \alpha^2 + \gamma^2 + \gamma_v + H \right). \label{eq:no4}
\end{multline}

The bound on $u'_2(x,t)$ is derived analogously as the bound on $\int_{\Omega_- \cup \Omega_+} \%$ in Lemma \ref{Lem:alpha}:
Observe that $D \cap \IR \times [0, t_0]  \subset \bigcup_{k=0}^\infty P_{br_0}(x_0 - 2kbr_0) \cup \bigcup_{k=1}^\infty P_{br_k}(z_k)$.
We estimate the heat kernel on each of these domains away from $\Omega'$.
By Lemma \ref{Lem:hk} (b) (with $\ov{r} = \frac12 r_0$) we have
\begin{multline*}
 |\Phi'|(x-x',t-t') \lesssim r_0^{-2} e^{-ck} \quad \text{and} \quad |\Phi''|(x-x',t-t') \lesssim r_0^{-2} e^{-ck} \\ \qquad \text{for} \qquad (x',t') \in P_{b r_0}(x_0 - 2kbr_0) \setminus \Omega'.
\end{multline*}
Moreover, again by Lemma \ref{Lem:hk} (b) (with $\ov{r} = \max \{ \frac12 r_0, y_k - x_0 - r_0 \}$)
\begin{multline*}
|\Phi'|(x-x',t-t') \lesssim (r_0 k)^{-2} e^{-c k} \lesssim r_0^{-1} r_k^{-1} e^{-c k}  \quad \text{and} \\ |\Phi''|(x-x',t-t') \lesssim (r_0 k)^{-3} e^{-c k} \lesssim r_0^{-2} r_k^{-1} e^{-c k} \quad \text{for} \quad (x',t') \in [y_k, y_{k+1}] \times [0,t_0] \setminus \Omega'.
\end{multline*}
Hence, we obtain
\begin{multline*} |u'_2|(x,t) \leq \sum_{k=0}^\infty \int_{P_{br_0}(x_0 - 2 k b r_0)} r_0^{-2} e^{-c k} \big(|R_u| + |I_u| + |S_u| + |J_u| \big) \\
+ \sum_{k=1}^\infty \int_{P_{br_k}(z_k)} r_0^{-1} r_k^{-1} e^{-c k} \big(|R_u| + |I_u| + |S_u| + |J_u| \big) \\
\lesssim r_0^{-1} \big( \alpha^2 + \beta^2 + \beta_v + H \big)
\end{multline*}
and thus
\[ \Vert u'_2 \Vert_{L^5(\Omega)} \lesssim r_0^{-2/5} \left( \alpha^2 + \beta^2 + \beta_v + H \right). \]
Hence, we have bounded $\gamma_u$.

We now bound $\gamma_v$.
The bound on $\Vert v' \Vert_{L^5(\Omega)}$ is derived in the same way as above with $\Phi$ replaced by $\ov\Phi$.

The bound on $\Vert v' \Vert_{L^{5/2}(\Omega)}$ also follows by the same arguments except that in (\ref{eq:no4}) we now have to use the $L^1$-boundedness of $\ov\Phi'$ in Young's inequality and Lemma \ref{Lem:CZ} to show
\begin{multline*}
 \Vert v'_1 \Vert_{L^{5/2}(\Omega)} \lesssim \Vert R_v + I_v + u' * J_u \Vert_{L^{5/2}(\Omega')} + \Vert S_v + J_v \Vert_{L^{5/2}(\Omega')} \\ \lesssim r_0^{-4/5} \left( \alpha^2 + \gamma^2 + \gamma_u H + H \right).
\end{multline*}
The exponential decay of the heat kernel $\ov \Phi$ now allows us to use a higher power of $r_0^{-1}$ in the bound for $v'_2$ (see the remark in Lemma \ref{Lem:hk} (b)):
\[ |v'_2|(x,t) \lesssim r_0^{-2} \left( \alpha^2 + \beta^2 + \beta_u H + H \right) \]

As for $\Vert v \Vert_{L^{5/2}(\Omega)}$, we replace $\ov \Phi'$ by $\ov \Phi$ and $\ov \Phi''$ by $\ov \Phi'$ in the argument above.
Again, using the $L^1$-boundedness of $\ov\Phi$ and $\ov\Phi'$, we get
\begin{multline*}
 \Vert v_1 \Vert_{L^{5/2}(\Omega)} \lesssim \Vert R_v + I_v + u' * J_u   \Vert_{L^{5/2}(\Omega')} + \Vert S_v + J_v \Vert_{L^{5/2}(\Omega')} \\
  \lesssim r_0^{-4/5} \left( \alpha^2 + \gamma^2 + \gamma_u H + H \right).
\end{multline*}
And by the same reason as before, we can bound 
\[ |v_2|(x,t) \lesssim r_0^{-2} \left(\alpha^2 + \beta^2 + \beta_u H + H \right). \]

Finally, for $\Vert v \Vert_{L^5(\Omega)}$ we make use of the interpolation inequality
\[ \Vert v \Vert_{L^5(\Omega)} \leq \Vert v \Vert_{L^\infty(\Omega)}^{1/2} \Vert v \Vert_{L^{5/2}(\Omega)}^{1/2} \leq r_0^{-2/5} \alpha_v + r_0^{2/5} \Vert v \Vert_{L^{5/2}(\Omega)}. \qedhere \]
\end{proof}

We can now use these inequalities to prove Theorem \ref{Thm:cuspinv}.
\begin{proof}[Proof of Theorem \ref{Thm:cuspinv}]
As pointed out in subsection \ref{subsec:Calculations}, it suffices to establish Proposition \ref{Prop:cuspinv2}.

Allow again $T'$ to vary, i.e. $\sigma^2 \leq T' \leq T$ and set $\chi_{T'} = \alpha_{T'} + \beta_{T'} + \gamma_{T'}$.
As long as $\chi < 0.1$, we conclude from Lemmas \ref{Lem:alpha}, \ref{Lem:beta}, \ref{Lem:gamma}
\begin{alignat*}{1}
 \alpha_u &\lesssim \chi^2 + \beta_v + \gamma_v + H \\
 \alpha_v &\lesssim \chi^2 + H \displaybreak[1] \\
 \beta_u &\lesssim \chi^{3/2} + \alpha_u + \beta_v + H \\
 \beta_v &\lesssim \chi^{3/2} + \alpha_v + H \displaybreak[1]\\
 \gamma_u &\lesssim \chi^2 + \beta_v + \gamma_v + H \\
 \gamma_v &\lesssim \chi^2 + \alpha_v + H
\end{alignat*}
Plug the second inequality into the fourth and sixth to get $\alpha_v, \beta_v, \gamma_v \lesssim \chi^{3/2} + H$.
This implies with the first and fifth inequality $\alpha_u, \gamma_u \lesssim \chi^{3/2} + H$.
Eventually, plugging everything into the third inequality, yields
\begin{equation}  \chi_{T'} \leq C_0 ( \chi_{T'}^{3/2} + H ). \label{eq:chi} \end{equation}
By the hypothesis of the Proposition $\chi_{\sigma^2} \leq C H$.
Choose $\varepsilon_0 = (2 C_0)^{-2}$ and assume that $\varepsilon_0 < 0.1$.
Moreover, set $\varepsilon_{inv} =  \min \{ (2 C_0)^{-1}, C^{-1} \} \varepsilon_0$ and assume $H < \varepsilon_{inv}$.
This implies $\chi_{\sigma^2} <  \varepsilon_0$.

We conclude now that $\chi_{T} < \varepsilon_0$:
Observe that $\chi_{T'}$ is continous in $T'$.
So if the hypothesis was wrong, then there would be some time $T' \in (\sigma^2, T]$ with $\chi_{T'} = \varepsilon_0$, and this would imply
\[ \varepsilon_0 < C_0 (\varepsilon_0^{3/2} + \varepsilon_{inv}) \leq \tfrac12 \varepsilon_0  + \tfrac12 \varepsilon_0. \] 
Plugging the bound $\chi_{T} < \varepsilon_0$ into (\ref{eq:chi}) yields
\[ \chi_{T} \leq 2 C_0 H \]
and hence the Proposition.
\end{proof}

\section{Ricci flow on the whole manifold} \label{sec:whole}
In this section we finally present the proof of Theorem \ref{Thm:main}.
In the following, denote the given hyperbolic manifold by $(M, \ov g)$.
The constants $\varepsilon$ and $C$ will only depend on $\sigma$ and an upper bound on $\vol M$ in dimension $n \geq 4$ resp. and upper bound on $\diam M_{cpt}$ in dimension $n = 3$.

\subsection{The heat kernel estimate}
Let $E = \Sym_2 T^* M$ and consider the heat kernel $(k_t) \in C^{\infty}(M \times M \times \IR_+; E \boxtimes E^*)$ associated to $L$ on $M$, i.e. for all $x \in M$
\[
(\partial_t + L)k_t(\cdot, x) = 0, \qquad
k_t(\cdot, x) \xrightarrow[t \to 0]{} \id_{E_x} \delta_x.
\]
Consider the decomposition $M = M_{cpt} \dot\cup M_{ncpt}$ with $M_{ncpt} = \bigcup_l N_l$ and let $s : M \to [0, \infty)$ such that it restricts to the coordinate $s$ on each cusp $N_l$ and $s=0$ on $M_{cpt}$.

\begin{Lemma} \label{Lem:wholehkest}
\begin{enumerate}[(a)]
 \item We have for all $x \in M$
\[ \Vert k_t(\cdot, x) \Vert_{L^1(B_\sigma(x) \times [0,\sigma^2])} < C. \]
 \item For $x, y \in M$ with either $d(x,y) \geq \sigma$ or $t \geq \sigma^2$
\begin{alignat*}{1}
  |k_t|(x,y) &< C \exp \left( {\textstyle \frac12} (n-1) (s(x) + s(y)) -(n-2)t \right). \\
  |k_t|(x,y) &< C \exp \left( (n-1) s(x) \right). 
\end{alignat*}
\end{enumerate}
\end{Lemma}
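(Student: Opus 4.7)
The plan is to adapt the proof of Lemma~\ref{Lem:oschk} (the heat kernel estimates on the cusp) to the whole manifold, invoking the Weitzenböck estimate $L \geq n-2$ from \eqref{eq:Lgeqnm2} globally, together with the Li--Yau heat kernel bound from \cite{LiYau} and the volume estimate $\vol B_{\sqrt{t}}(z) \geq c \min\{t^{n/2}, \exp(-(n-1)s(z)) t^{1/2}\}$, which remains valid since its worst case is governed by cusp geometry.

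For part (a), I would use Kato's inequality $(\partial_t - \triangle - 2)|k_t|(\cdot, y) \leq 0$, which follows from $\ov\Ric = -(n-1)\ov g$ and \eqref{eq:Lonhyp} as in Lemma~\ref{Lem:oschk}. This implies that $e^{-2t}\Vert k_t(\cdot,y)\Vert_{L^1(M)}$ is monotonically nonincreasing in $t$, with $t \to 0$ limit equal to the pointwise norm of $\id_{E_y}$, which is a dimensional constant. Combined with the symmetry property $|k_t|(x,y) = |k_t^*|(y,x)$, this yields $\Vert k_t(x,\cdot) \Vert_{L^1(M)} \leq C$ for $t \leq \sigma^2$, and integrating over $t \in [0,\sigma^2]$ gives the desired bound (which is stronger than restricting to $B_\sigma(x) \times [0,\sigma^2]$).

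For the first inequality in part (b), I would reproduce the three-step argument from Lemma~\ref{Lem:oschk} verbatim. First, Kato's inequality together with Li--Yau and the volume estimate yield at $t_0 = \sigma^2/10$ the bound $\Vert k_{t_0}(\cdot, y)\Vert_{L^2(M)} < C \exp(\tfrac12(n-1)s(y))$. Second, applying \eqref{eq:Lgeqnm2} to $k_t(\cdot, y)$ gives $\partial_t \Vert k_t(\cdot,y)\Vert_{L^2}^2 \leq -2(n-2)\Vert k_t(\cdot,y)\Vert_{L^2}^2$, which improves the previous estimate to $\Vert k_t(\cdot,y)\Vert_{L^2(M)} < C \exp(\tfrac12(n-1)s(y) - (n-2)t)$ for $t \geq t_0$. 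Third, the convolution identity and Cauchy--Schwarz, combined with the self-adjointness of $L$, yield
\[ |k_t|(x,y) \leq \Vert k_{t/2}(x,\cdot)\Vert_{L^2}\Vert k_{t/2}(\cdot,y)\Vert_{L^2} < C \exp\bigl(\tfrac12(n-1)(s(x)+s(y)) - (n-2)t\bigr) \]
for $t \geq \sigma^2/5$. For the remaining regime $t < \sigma^2$ with $d(x,y) \geq \sigma$, the small-time Li--Yau bound yields the desired inequality directly, since the Gaussian factor $\exp(-d^2/5t)$ dominates all polynomial-in-$t$ factors and the $-(n-2)t$ exponent is bounded by a constant.

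For the second inequality in part (b), the case $s(x) \geq s(y)$ is immediate since $\tfrac12(n-1)(s(x)+s(y)) - (n-2)t \leq (n-1)s(x)$. The harder case $s(x) < s(y)$ requires a more careful use of the Gaussian decay: since $s$ is a geodesic coordinate on each cusp, $d(x,y) \geq |s(y)-s(x)|$ up to a constant on $M_{cpt}$, so the Li--Yau bound produces an exponential factor $\exp\bigl(\tfrac12(n-1)s(y) - (s(y)-s(x))^2/5t\bigr)$, which is bounded by a universal constant times $\exp(\tfrac12(n-1)s(x))$ after optimization in $s(y)-s(x)$, absorbing also the bounded polynomial prefactors. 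For large $t$, one uses $\vol(M) < \infty$ to pass from $L^2$-decay to $L^1$-decay and iterates the convolution, $|k_t|(x,y) \leq \Vert k_{t_0}(x,\cdot)\Vert_{L^\infty}\Vert k_{t-t_0}(\cdot,y)\Vert_{L^1}$, so that the spectral decay compensates for the excess $s(y)$-growth, leaving only the $e^{(n-1)s(x)}$ dependence.

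The main obstacle is the second estimate in the case $s(x) < s(y)$: the symmetric $L^2$-based argument produces only a bound symmetric in $s(x)$ and $s(y)$, so one must exploit the Gaussian factor together with the geodesic lower bound $d(x,y) \geq |s(y)-s(x)|$ to break this symmetry and absorb the excess $s(y)$-factor. Everything else is a direct transfer of the cusp arguments to the finite-volume setting, replacing $N'$ by $M$ throughout.
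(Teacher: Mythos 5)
Your treatment of part (a) and of the first inequality in (b) coincides with the paper's argument (Kato plus monotonicity of $e^{-2t}\Vert k_t(\cdot,y)\Vert_{L^1}$, Li--Yau with the volume estimate for small times, the spectral bound (\ref{eq:Lgeqnm2}) and the convolution identity for $t \geq \sigma^2/5$, Gaussian absorption when $t<\sigma^2$ and $d(x,y)\geq\sigma$). The gap is in the second inequality of (b) in the case $s(x)<s(y)$ at non-small times. Your Gaussian optimization only works for bounded $t$: the Li--Yau-based bound (\ref{eq:CLY}) is only available for $t<1$, and even formally, maximizing $\tfrac12(n-1)D - D^2/(5t)$ over $D=s(y)-s(x)$ gives $e^{Ct}$, not a constant. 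Your large-time argument then fails quantitatively: with $t_0=\sigma^2$, the convolution bound gives $|k_t|(x,y)\leq \Vert k_{t_0}(x,\cdot)\Vert_{L^\infty}\Vert k_{t-t_0}(\cdot,y)\Vert_{L^1}\leq C e^{(n-1)s(x)}\cdot \vol(M)^{1/2} e^{\frac12(n-1)s(y)-(n-2)(t-t_0)}$, which still carries the uncompensated factor $e^{\frac12(n-1)s(y)}$; the spectral decay only absorbs it when $(n-2)t\gtrsim \tfrac12(n-1)s(y)$. So for, say, $s(x)=0$ and $t$ in the intermediate window $1\lesssim t\lesssim \frac{n-1}{2(n-2)}\,s(y)$ neither your Gaussian step nor your convolution step yields the claimed uniform bound $Ce^{(n-1)s(x)}$, and taking the minimum of the two available bounds still leaves this window open.

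The paper closes exactly this regime by a different mechanism: it fixes $x$, restricts $q_t=k_t(x,\cdot)$ to the cusp containing $y$, and splits $q_t=q_t^{inv}+q_t^{osc}$ as in subsection \ref{subsec:invosc}. The oscillatory part is beaten down by the torus-averaging trick (\ref{eq:trick}) (smallness of the cross-sectional tori plus the derivative bounds from Corollary \ref{Cor:Shi}), which produces an extra factor $e^{-\frac12(n-1)s(y)}$ cancelling the excess growth in the first inequality. The invariant part depends only on $(s,t)$, satisfies the one-dimensional system (\ref{eq:Linv1})--(\ref{eq:Linv3}), and is controlled on $\{s\geq s(x)+1\}$ by the maximum principle, with parabolic boundary data at $s=s(x)+1$ bounded by $Ce^{(n-1)s(x)}$ via the already established first inequality. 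Some such structural input (averaging over the collapsing torus directions plus a maximum-principle or ODE argument for the invariant modes) is genuinely needed here; an argument based only on $L^2$-spectral decay, finite volume, and Gaussian bounds cannot break the symmetry in $s(x)$ and $s(y)$ uniformly in $t$.
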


\begin{proof}
Part (a) follows in the same way as in the proof of Lemma \ref{Lem:oschk}.

Moreover, as in the proof of Lemma \ref{Lem:oschk}, we can derive the following bound for $t < 1$ (see (\ref{eq:CLYderder}) and (\ref{eq:volest}))
\begin{equation*} |\nabla^m k_t|(x,y) < C_m t^{-(n+m)/2} \exp \left( - \frac{d^2(x,y)}{8t} + \sfrac12 (n-1) (s(x) + s(y)) \right)
\end{equation*}
and the following bound for $t \geq \sigma^2/5$ (see (\ref{eq:ktnminus2}))
\begin{multline*} 
| k_t|(x,y) \leq \Vert k_{t/2}(x, \cdot) \Vert_{L^2(M)} \Vert k_{t/2}(\cdot,y) \Vert_{L^2(M)} \\
 < C \exp \left( {\textstyle \frac12} (n-1) (s(x) + s(y)) -(n-2)t \right).
 \end{multline*}
Hence, we have established the first inequality of part (b) and the second one in the case $t < 1$.

Observe that for the second inequality for $t \geq 1$, we only have to consider the case $s(x) < s(y)$, i.e. $y \in N_l$ for some $l$.
We fix $x \in M$ and analyze the function $q_t (y) = k_t^*(y,x) = k_t(x,y)$ on the cusp $N_l$.
It satisfies the linear equation $(\partial_t + L)q_t(y) = 0$.
As in subsection \ref{subsec:invosc}, we can split $q_t = q_t^{inv} + q_t^{osc}$.
By the first inequality of part (b) and (\ref{eq:trick}) in the proof of Lemma \ref{Lem:oschk}, we get
\begin{multline*} 
|q_t^{osc}| (y) \leq C \exp(-\sfrac12 (n-1) s(y))  \exp(\sfrac12 (n-1)(s(x) + s(y)) - (n-2)t) \\
\leq C \exp (\sfrac12 (n-1) s(x)). 
\end{multline*}
Hence it remains to bound $q_t^{inv}$.
Note that $g_t^{inv}$ only depends on $s$ and $t$ and satisfies the system of heat equations with right-hand side (\ref{eq:Linv1})-(\ref{eq:Linv3}).
Moreover, by the first inequality of part (b), we have $|q^{inv}_t| \leq C \exp ( (n-1) s(x))$ on the parabolic boundary $\{ s \geq s(x) + 1 \} \times \{ 0 \} \cup \{ s = s(x) + 1 \} \times [0, \infty)$.
So by the maximum principle
\[ |q^{inv}_t |(y) \leq C \exp ((n-1)s(x)) \]
which establishes the last inequality.
\end{proof}

\subsection{The final argument}
\begin{proof}[Proof of Theorem \ref{Thm:main}]
 By Proposition \ref{Prop:MDTisRF}, it suffices show Proposition \ref{Prop:main}, i.e. that we have convergence for modified Ricci deTurck flow $(g_t)$ (see (\ref{eq:MRdTflow})).
Set  $h_t = g_t - \ov g$.
By (\ref{eq:Qtequation}), we can write down the flow equation as
\[ \partial_t h_t + L h_t = Q_t = R[h_t] + \nabla^* S[h_t],
\]
where $|Q_t| \leq C (|h_t|^2+|\nabla h_t|^2+|\nabla^2 h_t|^2)$ if $|h_t|<0.1$.
Let $[0,T_{\max})$ be the maximal time interval on which a solution $(h_t)$ to the modified Ricci deTurck flow equation exists which is uniformly bounded on compact time intervals.
If $H = \Vert h_0 \Vert_{L^\infty(M)} < \varepsilon_{s.e.}$, then Proposition \ref{Prop:shortex} implies $T_{\max} \geq \tau_{s.e.} > 200 \sigma^2$ and $\Vert h_t \Vert_{L^\infty(M \times [0, \tau_{s.e.}])} < C H$ as well as $\Vert h_t \Vert_{C^{4;2}(M \times [\frac12 \tau_{s.e.}, \tau_{s.e.}])} < C H$ (so assume from now on $\varepsilon < \varepsilon_{s.e.}$).
Moreover, if $T_{\max} < \infty$, it follows that we cannot have $\Vert h_t \Vert_{L^\infty(M)} < \varepsilon_{s.e.}$ for all $t \in [0,T_{\max})$, since otherwise the solution could be extended to the time interval $[0,T_{\max} + \tau_{s.e.})$. 
By Corollary \ref{Cor:Shi}, this implies that we even cannot have $\Vert h_t \Vert_{L^\infty(M)} < \varepsilon_0$ for all $t \in [0,T_{\max})$ where $\varepsilon_0$ has to be sufficiently small.
In the following we will show that for small enough $H$, we can bound this norm by $CH$ for some $C$ which is independent of $T_{\max}$.
This implies then that $T_{\max} = \infty$ if $H$ is sufficiently small.

Choose and fix two constants $\lambda, \beta$ which satisfy the inequalities $\frac12(n-1) < \beta < n-1$, $0 < \lambda < n-2$ and $(n-1)(n-2) > \frac{\lambda}2 (n-1) + \beta (n-2)$.
We introduce a time dependent weight function on $[0,\infty)$ which we will use to bound $h_t$:
\[ W_t(s) = \min \big\{ \exp(\beta s - \lambda t), 1 \big\}. \]
For any $T \leq T_{\max}$ set
\[ \omega_T = \sup_{(x,t) \in M \times [0,T)} W^{-1}_t(s(x)) | h_t |(x). \]
Observe that if $H < \varepsilon_{s.e.}$, then
\begin{equation} \label{eq:omegasigmasq}
 \omega_{\sigma^2} \leq C_1 H.
\end{equation}
By Corollary \ref{Cor:Shi}, we conclude that there is an $\varepsilon_1 > 0$ such that if $\omega_T < \varepsilon_1$, then for $t \geq \sigma^2$
\[
| \nabla h_t |(x), \; | \nabla^2 h_t |(x) \leq C \omega_T W_t(s(x)). 
\]
In this case, we can estimate
\begin{equation} 
| Q_t |(x) \leq C \omega_T^2 W_t^2(s(x))  \quad \text{for} \quad t \in [\sigma^2, T). \label{eq:estQ}
\end{equation}

Lemma \ref{Lem:wholehkest} (b) implies that whenever $d(x,y) \geq \sigma$ or $t \geq \sigma^2$, the heat kernel obeys the bound $|k_t|(x,y) \leq C K_t(s(x), s(y))$ where
\[ K_t(s_1, s_2) = \min \big\{ \exp( \sfrac12 (n-1) (s_1 + s_2) -(n-2) t ), \; \exp ( (n-1) s_1) \big\}. \]

Let now $\sigma^2 < T \leq T_{\max}$ and assume $\omega_T < \varepsilon_1$.
Choose $(x_0, t_0) \in M \times [0,T)$, set $s_0 = s(x_0)$ and consider the case
\[ \beta (s_0 - 11 \sigma) \leq \lambda t_0. \]
We will derive a better bound on $h_{t_0}(x_0)$.
If $t_0 \leq \tau_{s.e.}$, then $|h_{t_0}(x_0)| \leq C H$.
So assume in the following that $t_0 > \tau_{s.e.} > 200 \sigma^2$.
Analogous to subsections \ref{subsec:representh} and \ref{subsec:representu}, we can derive the representation
\[ h_{t_0}(x_0) = \int_{M \times [\sigma^2,t_0]} k_{t_0-t}(x_0, x) Q_t (x) dx dt + \int_M k_{t_0-\sigma^2}(x_0, x) h_{\sigma^2}(x) dx. \]
If we split the last integral into integrals over $M_{cpt}$ and the cusps, we find that its absolute value is bounded by
\[ C H K_{t_0 - \sigma^2}(s_0,0) + C H \int_0^\infty K_{t_0 - \sigma^2}(s_0,s) e^{-(n-1)s} ds \leq C H \exp (\beta s_0 - \lambda t_0). \]
The first integral can be split into integrals over $B_\sigma(x_0) \times [t_0-\sigma^2, t_0]$ and its complement in $M \times [0,t_0]$, so its absolute value is bounded by
\begin{multline*} 
\int_{B_\sigma(x_0) \times ([t_0 - \sigma^2, t_0] \cap [\sigma^2,t_0])} |k_{t_0-t}|(x_0, x) |Q_t| (x) dx dt \\
+  \int_{M \times [\sigma^2, t_0]} K_{t_0-t}(s_0,s(x)) |Q_t|(x) dx dt
\end{multline*}
By Lemma \ref{Lem:wholehkest} (a) and (\ref{eq:estQ}), the first integral is bounded by $C \omega_T^2 W_{t_0}^2(s_0)$ and if we split the second integral into integrals over $M_{cpt}$ and each of the cusps, we get the bound
\[ C \omega_T^2 \left( \int_0^{t_0} K_{t_0-t}(s_0,0) W_t^2(0) dt + \int_0^{t_0} \int_0^\infty K_{t_0-t}(s_0,s) W_t^2(s) e^{-(n-1)s} ds dt \right). \]

The next Lemma implies then
\begin{equation} |h_{t_0}|(x_0) \leq C (\omega_T^2 + H) \exp (\beta s_0 - \lambda t_0 ). \label{eq:htest1} \end{equation}
So around the boundaries $\partial N_l$ of the cusps, we have the estimate $|h_t| \leq C ( \omega_T^2 + H ) e^{-\lambda t}$.
Proposition \ref{Prop:Shi} again implies that if $C (\omega_T^2 + H) < \varepsilon_1$, we have the bounds 
\[ |h_t|, \; |\nabla h_t|, \; |\nabla^2 h_t| \leq C ( \omega_T^2 + H ) e^{-\lambda t}  \quad \text{for} \quad t \in [2\sigma^2,T).\]
So using the derivative bounds on the time interval $[\frac12 \tau_{s.e.}, \tau_{s.e.}]$, we can invoke Theorem \ref{Thm:RdToncusp} to find that if $C(\omega_T^2 + H) < \min \{ \varepsilon_{cusp}, \varepsilon_1 \}$ and $\omega_T < \varepsilon_1$, then
\[ \Vert h_t \Vert_{L^\infty(M_{ncpt} \times [0,T))} \leq C_{cusp} C (\omega_T^2 + H). \]
This together with (\ref{eq:htest1}) yields
\begin{equation} \label{eq:omega}
\omega_T \leq C_0 ( \omega_T^2 + H ). 
\end{equation}
For some constant $C_0$ which is independent of $T_{\max}$.

Now let $0 < \varepsilon' < (2C_0)^{-1}$ and small enough such that $\omega_T, H \leq \varepsilon'$ implies that we can carry out all steps of the argument above (in particular apply Proposition \ref{Prop:shortex}).
We remark that $\omega_T$ depends continuously on $T$.
Let $\varepsilon = \min \{ (2C_0)^{-1}, (2C_1)^{-1}, 1 \} \varepsilon'$ (where $C_1$ is the constant from (\ref{eq:omegasigmasq})).
If $H < \varepsilon$, then we cannot have $\omega_T = \varepsilon'$ for any $T \in [\sigma^2,T_{\max})$ since this would contradict (\ref{eq:omega}).
So if $H < \varepsilon$, we have $\omega_{\sigma^2} \leq C_1 H < \varepsilon'$ and hence by continuity $\omega_{T_{\max}} < \varepsilon'$.
This implies
\[ \omega_{T_{\max}} \leq 2 C_0 H \]
which concludes the proof.
\end{proof}

\begin{Lemma} \label{Lem:C0est}
There is a constant $C$ which does not depend on $s_0$ or $t_0$ such that
\begin{alignat*}{1} 
 \int_0^{t_0} K_{t_0-t}(s_0,0) W_t^2(0) dt &\leq C \exp(\beta s_0 - \lambda t_0 ) \\
 \int_0^{t_0} \int_0^\infty K_{t_0-t}(s_0,s) W_t^2(s) e^{-(n-1)s} ds dt &\leq C \exp(\beta s_0 - \lambda t_0 )
\end{alignat*}
\end{Lemma}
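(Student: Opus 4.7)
The plan is to reduce both integrals to explicit sums of exponentials by splitting the integration domain at the points where the $\min$'s defining $K_t$ and $W_t$ switch branches, then to verify that, after all dust settles, the exponent is always controlled by $\beta s_0-\lambda t_0$ with a coefficient $\geq 1$.

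For the first integral, things are simple: $W_t(0)^2=e^{-2\lambda t}$, and since $s_2=0\leq s_0$ the first branch of $K$ is always smaller, giving $K_{t_0-t}(s_0,0)=e^{(n-1)s_0/2-(n-2)(t_0-t)}$. The integrand is then a single exponential, and evaluating $\int_0^{t_0}e^{(n-2-2\lambda)t}dt$ (the two sub-cases $2\lambda<n-2$ and $2\lambda>n-2$) produces a bound whose exponent is $(n-1)s_0/2-2\lambda t_0$ or $(n-1)s_0/2-(n-2)t_0$. Using $\beta>(n-1)/2$ and $\lambda<n-2$ respectively, both are dominated by $\beta s_0-\lambda t_0$.

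For the second integral, I would split the inner $s$-integration at the two transition points
\[ s_*(\tau)=s_0+\tfrac{2(n-2)\tau}{n-1},\qquad s_\dagger(t)=\tfrac{\lambda t}{\beta}, \]
where $\tau=t_0-t$, and use the appropriate branch of $K$ and of $W_t^2$ on each of the (up to) four resulting sub-intervals. On every sub-interval the integrand is a product of exponentials, so the $s$-integration is elementary; then the $\tau$-integration is again explicit. A careful but routine calculation shows that in each case, after both integrations, the resulting bound takes the form
\[ C\exp\!\Big(\tfrac{2(n-1)(n-2)}{\lambda(n-1)+2\beta(n-2)}\,(\beta s_0-\lambda t_0)\Big), \]
with (in the subregime $\beta s_0>\lambda t_0$) an auxiliary $O(1)$ contribution that is trivially absorbed by $e^{\beta s_0-\lambda t_0}\geq 1$. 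The hypothesis $(n-1)(n-2)>\tfrac{\lambda}{2}(n-1)+\beta(n-2)$ is precisely the statement that the coefficient $\tfrac{2(n-1)(n-2)}{\lambda(n-1)+2\beta(n-2)}$ is $>1$. In the regime $\beta s_0\leq\lambda t_0$ (where $\beta s_0-\lambda t_0\leq 0$) a coefficient $>1$ yields an even smaller exponential, hence $\leq Ce^{\beta s_0-\lambda t_0}$, and in the opposite regime the bound was already $\leq C\leq Ce^{\beta s_0-\lambda t_0}$.

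The main obstacle is the bookkeeping: a priori the four sub-regions of the $s$-splitting could generate four different linear combinations of $s_0$ and $t_0$ in the exponent, but they all conspire to yield the \emph{same} expression $\beta s_0-\lambda t_0$ with the \emph{same} coefficient $\tfrac{2(n-1)(n-2)}{D}$, where $D=\lambda(n-1)+2\beta(n-2)$. This algebraic coincidence is what forces the exact form of the hypothesis relating $\beta$, $\lambda$ and $n$, and makes the lemma (and hence the choice of weight $W_t$ in the proof of Theorem~\ref{Thm:main}) sharp.
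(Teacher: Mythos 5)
Your handling of the first integral is correct (at $s=0$ the first branch of $K$ is indeed always the minimum, and the exponent comparisons via $\beta>\tfrac12(n-1)$, $\lambda<n-2$ go through; only the borderline case $2\lambda=n-2$, which your dichotomy omits, needs an extra word about the factor $t_0$). Your plan for the second integral -- splitting at both switching curves and integrating exponentials region by region -- is genuinely different from the paper, which never splits over the branches of $W$: it uses the one-line estimate $W_t^2(s)\le e^{\beta s-\lambda t}$ (valid on both branches), pulls out the factor $e^{\beta s_0-\lambda t_0}$, and shows that the normalized integrand $P$ has integral $O(1)$ by splitting only along the line where the two branches of $K$ agree, where the hypothesis on $\beta,\lambda$ gives decay $e^{-\delta(t_0-t)}$.

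However, as written there is a genuine gap in the regime $\beta s_0>\lambda t_0$. A bound of the form $C\exp\bigl(\kappa(\beta s_0-\lambda t_0)\bigr)$ with $\kappa=\tfrac{2(n-1)(n-2)}{\lambda(n-1)+2\beta(n-2)}>1$ is \emph{weaker} than the assertion of the lemma when the exponent is positive, so it cannot be ``absorbed''; and your fallback claim that in this regime ``the bound was already $\le C$'' is false: take $t_0=1$ and $s_0\to\infty$; on the unit square $(s,t)\in[0,1]\times[0,1]$ one has $K_{t_0-t}(s_0,s)\ge e^{\frac12(n-1)s_0-(n-2)}$ and $W_t^2(s)\ge e^{-2\lambda}$, so the double integral is at least $c\,e^{\frac12(n-1)s_0}$, which is unbounded (it is still $\le Ce^{\beta s_0-\lambda t_0}$ because $\beta>\tfrac12(n-1)$, but that is exactly the comparison your argument never makes). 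The underlying point is that the ``algebraic coincidence'' you invoke only occurs when the two switching curves meet inside the strip: the integrand is maximized where $s=s_0+\tfrac{2(n-2)}{n-1}(t_0-t)=\lambda t/\beta$, whose time coordinate is $t_1=\tfrac{\beta(n-1)s_0+2\beta(n-2)t_0}{\lambda(n-1)+2\beta(n-2)}$, and $t_1\le t_0$ precisely when $\beta s_0\le\lambda t_0$. When $\beta s_0>\lambda t_0$ the maxima sit on the boundary $t=t_0$ and the four regions produce contributions such as $O(1)$ and $C\exp\bigl(\tfrac{n-1}{2\beta}(\beta s_0-\lambda t_0)\bigr)$, which must then be compared with $e^{\beta s_0-\lambda t_0}$ using $\beta>\tfrac12(n-1)$, $\beta<n-1$, $\lambda<n-2$; your proposal omits exactly this case analysis. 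The most economical repair is the paper's normalization trick: estimate $W_t^2(s)\le e^{\beta s-\lambda t}$, divide by $e^{\beta s_0-\lambda t_0}$, and prove that the normalized integral is bounded by a constant -- then no case distinction in $(s_0,t_0)$ is needed at all.
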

\begin{proof}
First observe that since 
\[ K_{t_0-t} (s_0, 0) W^2_t(0) < C \min_{0 \leq s \leq 1} K_{t_0-t}(s_0, s) W^2_t(s) e^{-(n-1)s}, \]
we have the estimate 
\[ K_{t_0-t} (s_0, 0) W^2_t(0) < C \int_0^1 K_{t_0-t}(s_0, s) W^2_t(s) e^{-(n-1)s} ds \]
and thus we only have to prove the second inequality since it implies the first one.

In the following, we will simply estimate $W^2_t(s) \leq \exp(\beta s - \lambda t)$ and prove that $\int_0^{t_0} \int_0^\infty P(s,t) ds dt \leq C$ where
\[ P(s,t) = K_{t_0-t}(s_0,s) \exp ( \beta (s-s_0) - \lambda (t-t_0) - (n-1) s ). \]
The desired inequality then follows immediately.

\begin{figure}[t] \label{fig:regions}
\caption{The regions $R_1$, $R_2$.}
\vspace{15mm}
\begin{center}
\begin{picture}(0,0)%
\includegraphics[width=7cm]{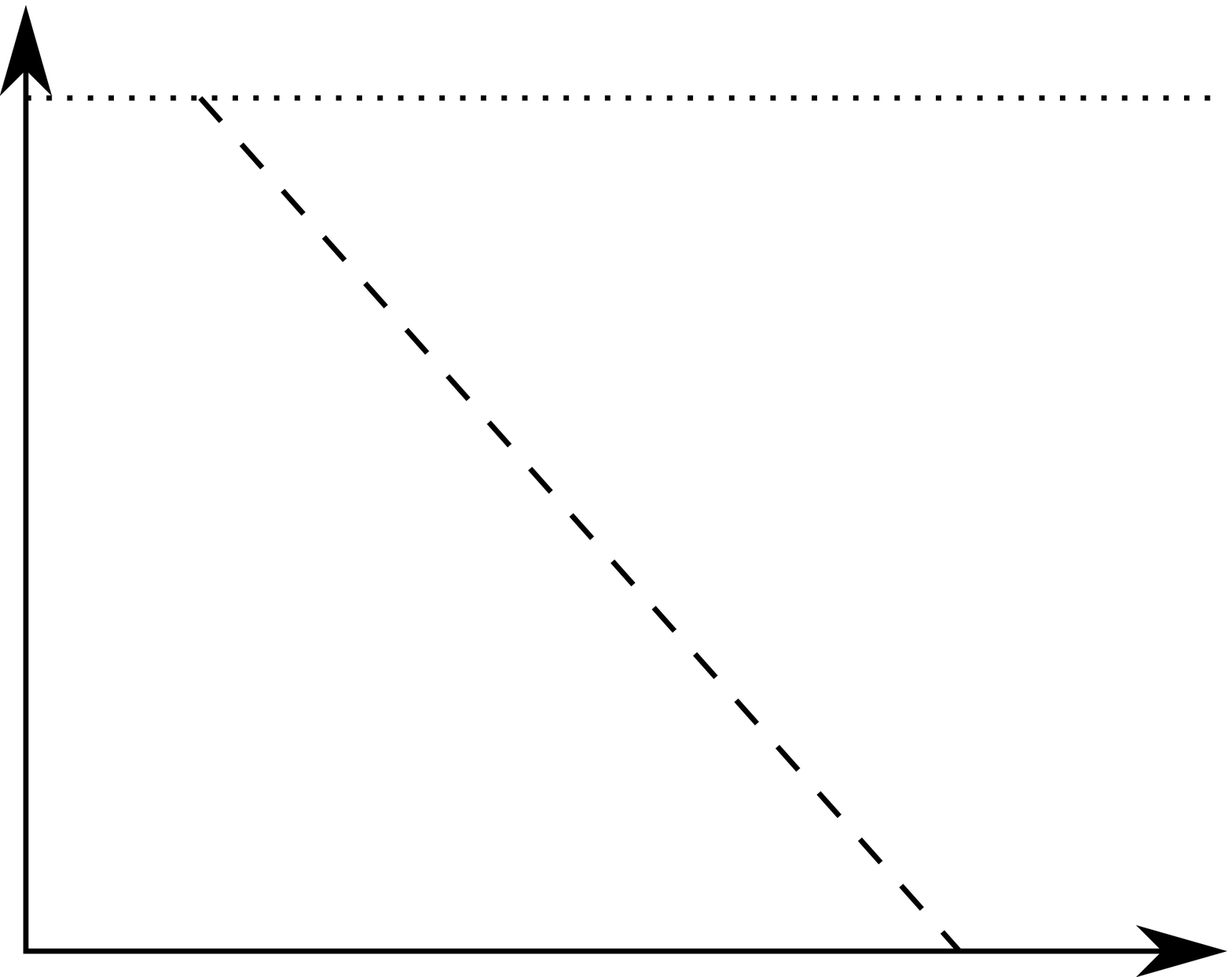}%
\end{picture}
 \setlength{\unitlength}{3947sp}%
 \setlength{\unitlength}{0.6\unitlength}
 \begingroup\makeatletter\ifx\SetFigFont\undefined%
 \gdef\SetFigFont#1#2#3#4#5{%
 \reset@font\fontsize{#1}{#2pt}%
 \fontfamily{#3}\fontseries{#4}\fontshape{#5}%
 \selectfont}%
 \fi\endgroup%
 \begin{picture}(5300,3500)(518,-4964) \put(4500,-3200){\makebox(0,0)[lb]{\smash{{\SetFigFont{12}{14.4}{\familydefault}{\mddefault}{\updefault}$R_2$}}}}
 \put(1300,-3200){\makebox(0,0)[lb]{\smash{{\SetFigFont{12}{14.4}{\familydefault}{\mddefault}{\updefault}$R_1$}}}}
 \put(2700,-2600){\makebox(0,0)[lb]{\smash{{\SetFigFont{12}{14.4}{\familydefault}{\mddefault}{\updefault}$L$}}}}
 \put(900,-850){\makebox(0,0)[lb]{\smash{{\SetFigFont{12}{14.4}{\familydefault}{\mddefault}{\updefault}$(s_0,t_0)$}}}}
 \put(5500,-4600){\makebox(0,0)[lb]{\smash{{\SetFigFont{12}{14.4}{\familydefault}{\mddefault}{\updefault}$s$}}}}
 \put(550,-700){\makebox(0,0)[lb]{\smash{{\SetFigFont{12}{14.4}{\familydefault}{\mddefault}{\updefault}$t$}}}}
 \end{picture}%
\end{center}
\vspace{1mm}
\end{figure}

Divide the domain $[0,\infty) \times [0,t_0]$ into two regions $R_1$ and $R_2$ by the line $L$
\[ \sfrac12 (n-1) (s - s_0) -(n-2) (t_0-t) = 0. \]
It indicates where the two terms in the minimum of the definition of $K_{t_0-t}(s_0,s)$ agree.
Then on region $R_1$
\begin{multline*}
 P(s,t) = \exp\big( \tfrac12 (n-1) (s_0 - s) - (n-2) (t_0 -t) + \beta(s - s_0) - \lambda (t - t_0) \big) \\
 = \exp \big( (\beta - \tfrac12(n-1)) (s - s_0) + (n-2+\lambda)(t_0-t) \big)
\end{multline*} 
and on region $R_2$
\begin{multline*}
 P(s,t) = \exp \big( (n-1)(s_0 - s) + \beta(s - s_0) - \lambda (t - t_0) \big) \\
 = \exp \big( - ( n-1 - \beta) (s - s_0) + \lambda (t_0 - t) \big).
\end{multline*}
On the line $L$, we have
\[ P(s,t) = \exp \Big( - \frac{2}{n-1} \big( (n-1)(n-2) - \tfrac{\lambda}2 (n-1) -  \beta (n-2) \big) (t_0 - t) \Big). \]

Hence, since $\beta > \tfrac12(n-1)$ and $\delta =  \frac{2}{n-1} ( (n-1)(n-2) - \tfrac{\lambda}2 (n-1) - \beta (n-2) ) > 0$, the integral over $R_1$ can be estimated by 
\[ \int_{R_1} P(s,t) ds dt \leq C \int_0^{t_0} \exp (- \delta (t_0 - t) ) dt \leq C. \]
Analogously, we can bound the integral over $R_2$.
Here we use the fact that $\beta < n-1$.
\end{proof}

\end{document}